\documentclass[11pt]{article}
\usepackage[centertags]{amsmath}
\usepackage{amsfonts}
\usepackage{amssymb}
\usepackage{bm}
\usepackage{makeidx}
\usepackage{newlfont}
\usepackage{amsthm} 
\usepackage{stmaryrd}
\usepackage[]{titletoc}  
\usepackage{mathrsfs}
\usepackage{stmaryrd}
\usepackage{graphicx}  
\usepackage{xypic}
\usepackage{hyperref}
\usepackage[paperwidth=19.5cm,paperheight=27cm,top=2.5cm,bottom=2.5cm,left=2.5cm,right=2.5cm]{geometry}

\hfuzz2pt 
\newlength{\defbaselineskip}
\setlength{\defbaselineskip}{\baselineskip}
\newcommand{\setlinespacing}[1]%
           {\setlength{\baselineskip}{#1 \defbaselineskip}}

\theoremstyle{plain}
\newtheorem{thm}{Theorem}[section]
\newtheorem{cor}[thm]{Corollary}
\newtheorem{lem}[thm]{Lemma}
\newtheorem{prop}[thm]{Proposition}
\newtheorem{exam}[thm]{Example}

\newtheorem{Qes}[thm]{Question}

\makeatletter\@addtoreset{equation}{section} \makeatother
\setcounter{page}{1}
\begin{document}

\title {Nevanlinna class, Dirichlet series and Szeg\"{o}'s problem}
\author{
Kunyu Guo \quad Jiaqi Ni \quad Qi Zhou
 }
\date{}
 \maketitle \noindent\textbf{Abstract:}
This paper is associated with Nevanlinna class, Dirichlet series and Szeg\"{o}'s problem in infinitely many variables.
As we will see, there is a natural connection between these topics.
The paper first introduces the Nevanlinna class and the Smirnov class in this context,
and generalizes the classical  theory  in finitely many variables to the infinite-variable setting.
These results  applied to Szeg\"{o}'s problem on Hardy spaces in infinitely many variables.
Moreover, this paper is  also devoted to the study of the correspondence between the Nevanlinna functions and Dirichlet series.
 \vskip 0.1in \noindent \emph{Keywords:}
Nevanlinna class, Smirnov class, Dirichlet series, Szeg\"{o}'s problem, infinitely many variables.
\vskip 0.1in \noindent \emph{MSC (2010):}
Primary 30H15; Secondary 30B50; 46E50.

\section{Introduction}

This paper is of three purposes.
The first is to study the function theory of the Nevanlinna class and the Smirnov class in infinitely many variables, the second is to consider Szeg\"{o}'s problem in infinitely many variables, and the third is to discuss the relationship between Dirichlet series and these functions by the Bohr correspondence.

The function theory in infinitely many variables has received attention in recent years, see \cite{AOS2, BBSS, CG, Ni, Ni2}.
We begin with the familiar Hardy spaces.
Let $\mathbb{T}^\infty=\mathbb{T}\times\mathbb{T}\times\cdots$ denote the cartesian product of countably infinitely many unit circles $\mathbb{T}$, equipped with the product topology.
Then $\mathbb{T}^\infty$ is a compact group with the Haar measure $dm_\infty=\frac{d\theta_1}{2\pi}\times\frac{d\theta_2}{2\pi}\times\cdots$.
By a polynomial we mean that it is an analytic polynomial only depending on finitely many complex variables.
Let $\mathcal{P}_\infty$ denote the ring consisting of all polynomials.
For $0<p<\infty$, the Hardy space $H^{p}(\mathbb{T}^{\infty})$ is defined to be the closure of $\mathcal{P}_\infty$ in $L^{p}(\mathbb{T}^{\infty})$. Therefore, when $1\leq p<\infty$, $H^{p}(\mathbb{T}^{\infty})$ is a Banach space with the norm of $L^{p}(\mathbb{T}^{\infty})$, and when $0<p<1$, $H^{p}(\mathbb{T}^{\infty})$ is complete in the metric $d_p(f,g)=\int_{\mathbb{T}^{\infty}}|f-g|^p dm_\infty$. It is clear that the Hardy space $H^{p}(\mathbb{T}^n)$ over the $n$-torus $\mathbb{T}^n$ can be viewed as a closed subspace of $H^{p}(\mathbb{T}^{\infty})$.

Assume $0<p<\infty$. In finite-variable setting, it is known that $H^{p}(\mathbb{T}^n)$ is canonically isometrically isomorphic to the Hardy space $H^{p}(\mathbb{D}^n)$ over the polydisk $\mathbb{D}^n$. We now turn to the infinite-variable setting. Let $\mathbb{D}^\infty=\mathbb{D}\times\mathbb{D}\times\cdots$ be the cartesian product of countably infinitely many open unit disks $\mathbb{D}$. Cole and Gamelin \cite{CG} showed that every function $f\in H^{p}(\mathbb{T}^{\infty})$, by evaluation functional, can be extended to a function $\tilde{f}$ holomorphic on $\mathbb{D}_{2}^\infty=\ell^2\cap\mathbb{D}^\infty$, a domain in the Hilbert space $\ell^2$ of all square-summable sequences.
In particular, for $1\leq p<\infty$, this holomorphic function can be represented by taking Poisson integrals \cite{CG}.
On the other hand, the Hardy space $H^{p}(\mathbb{D}_{2}^\infty)$ over $\mathbb{D}_{2}^\infty$ is defined as follows:
$$H^{p}(\mathbb{D}_{2}^\infty)=\left\{F\;\text{is holomorphic on}\;\mathbb{D}_2^\infty: \|F\|_p^p=\sup_{0<r<1}\int_{\mathbb{T}^\infty}|F_{[r]}|^p dm_\infty<\infty\right\},$$
where
$$F_{[r]}(w)=F(rw_1,\ldots,r^{n}w_n,\ldots),\quad w=(w_1,w_2,\cdots)\in\mathbb{T}^\infty,$$
see \cite{BBSS, DG3}. As the same in the finite-variable setting, for every nonzero function $F\in H^{p}(\mathbb{D}_{2}^\infty)$, the radial limit $F^{*}(w)=\lim_{r\rightarrow1}F_{[r]}(w)$ exists for almost every $w\in\mathbb{T}^{\infty}$, and $\log|F^{*}|\in L^1(\mathbb{T}^\infty)$ \cite{AOS2, BP}. Furthermore, the map $F\mapsto F^{*}$ gives a canonical isometric isomorphism from $H^{p}(\mathbb{D}_{2}^\infty)$ onto $H^{p}(\mathbb{T}^{\infty})$, and its inverse is given by $f\mapsto\tilde{f}$, $f\in H^{p}(\mathbb{T}^{\infty})$, see \cite{AOS2, BBSS, DG3, Ni, Ni2}.

Hardy spaces in infinitely many variables are also closely related to spaces formed by Dirichlet series. Let $\mathcal{P}_D$ be the set of all Dirichlet polynomials $Q(s)=\sum_{n=1}^N a_n n^{-s}$. For $0<p<\infty$ and $Q\in\mathcal{P}_D$, it follows from the almost periodicity of the function $t\mapsto|Q(it)|^p$ that
$$\|Q\|_{p}^{p}=\lim_{T\rightarrow\infty}\frac{1}{2T}\int_{-T}^{T}|Q(it)|^p dt$$
exists, see \cite{Bes}, or \cite[Theorem 1.5.6]{QQ}.
The Hardy-Dirichlet space $\mathcal{H}^p$ is defined to be the completion of $\mathcal{P}_D$ in the metric $\|\cdot\|_p$ \cite{Bay}. Bohr's vision below \cite{Boh} allows us to investigate $\mathcal{H}^p$ via the Hardy space $H^{p}(\mathbb{T}^{\infty})$. Let $\mathbb{N}=\{1,2,\ldots\}$ be the set of positive integers and $p_j$ the $j$-th prime number. With each $n\in\mathbb{N}$ is associated a unique prime factorization $n=p_{1}^{\alpha_1}\cdots p_{k}^{\alpha_k}$, and set $\alpha(n)=(\alpha_1,\ldots,\alpha_k,0,\ldots)$.
For a sequence of complex numbers $\zeta=(\zeta_1,\zeta_2,\ldots)$, write $\zeta^{\alpha(n)}=\zeta_{1}^{\alpha_1}\cdots\zeta_{k}^{\alpha_k}$. The Bohr correspondence
$$\mathcal{B}:\;\sum_{n=1}^N a_n n^{-s}\;\mapsto\sum_{n=1}^{N}a_n\zeta^{\alpha(n)}$$
is an algebraic isomorphism from $\mathcal{P}_D$ onto $\mathcal{P}_\infty$. Then by Birkhoff-Oxtoby theorem \cite[Theorem 6.5.1]{QQ}, for every $Q\in\mathcal{P}_D$, $\|Q\|_{p}^{p}=\int_{\mathbb{T}^{\infty}}|\mathcal{B}Q|^p dm_\infty$, and hence the Bohr correspondence can be extended to an isometric isomorphism from $\mathcal{H}^p$ onto $H^{p}(\mathbb{T}^{\infty})$.

When $p=\infty$, let $H^\infty(\mathbb{T}^\infty)$ be the weak$^{*}$-closure of $\mathcal{P}_\infty$ in $L^{\infty}(\mathbb{T}^\infty)$.
As done in \cite{Aro, CG, Hil}, there is a canonical isometric isomorphism from $H^\infty(\mathbb{T}^\infty)$ onto $H^\infty(\mathbb{D}_{2}^\infty)$, the Banach algebra consisting of all bounded holomorphic functions on $\mathbb{D}_{2}^\infty$, by taking Poisson integrals.
In addition, the Hardy space $H^\infty(\mathbb{T}^\infty)$ can be identified with the Hardy-Dirichlet space $\mathcal{H}^\infty$ by the Bohr correspondence, see works of Hedenmalm, Lindqvist and Seip \cite{HLS}.
For some recent works on the Hardy-Dirichlet spaces $\mathcal{H}^p\;(0<p\leq\infty)$, we refer the  reader to \cite{AOS1, BBSS, BDFMS, BPSSV, BQS, OS}.

The above statements briefly sketch some background material of both  $H^p$ and $\mathcal{H}^p$ in the case $0<p\leq \infty.$  This paper is intended as an attempt to develop the theory of limit function spaces  in two cases of both $H^p$ and $\mathcal{H}^p$ as  $p\rightarrow0^{+}$. We first consider  the case  $H^p$ as  $p\rightarrow0^{+}$, which is  parallel to the finite-variable setting \cite{Ru2}.
For $0<p<\infty$ and $f\in L^{p}(\mathbb{T}^\infty)$, write
$$\|f\|_p=\left(\int_{\mathbb{T}^\infty}|f|^p dm_\infty\right)^{\frac{1}{p}},$$
then when $0<p<q\leq\infty$, $L^{q}(\mathbb{T}^\infty)\subset L^{p}(\mathbb{T}^\infty)$.
As well known, if $f\in L^{r}(\mathbb{T}^\infty)$ for some $0<r\leq\infty$, then $\|f\|_{p}$ tends to $\exp\left(\int_{\mathbb{T}^\infty}\log|f|dm_\infty\right)$ as $p\rightarrow0^{+}$, where $\exp(-\infty)$ is defined to be zero. An observation is that if $f$ is a complex measurable function on $\mathbb{T}^\infty$, then
$\exp\left(\int_{\mathbb{T}^\infty}\log|f|dm_\infty\right)$ is finite if and only if
$\exp\left(\int_{\mathbb{T}^\infty}\log(1+|f|)dm_\infty\right)$ is finite, and the latter is equivalent to
$$\|f\|_0=\int_{\mathbb{T}^\infty}\log(1+|f|)dm_\infty<\infty.$$
So the limit space of $L^{p}(\mathbb{T}^\infty)$ as $p\rightarrow0^{+}$, denoted by $L^{0}(\mathbb{T}^\infty)$, is defined to be the set of all complex measurable functions $f$ on $\mathbb{T}^\infty$ for which $\|f\|_0$ is finite. Then $L^{0}(\mathbb{T}^\infty)$ is a topological vector space with the complete metric $d_0(f,g)=\|f-g\|_0$.
The limit space of $H^{p}(\mathbb{T}^\infty)$ as $p\rightarrow0^{+}$, denoted by $N_{*}(\mathbb{T}^\infty)$, is defined to be the closure of $\mathcal{P}_\infty$ in $L^{0}(\mathbb{T}^{\infty})$, so-called the Smirnov class over $\mathbb{T}^\infty$. It is shown that each function in  $N_{*}(\mathbb{T}^\infty)$ can be uniquely analytically   extended to a domain  $\mathbb{D}_{1}^\infty$ of $\ell^1$, where  $\mathbb{D}_{1}^\infty=\ell^1\cap\mathbb{D}^\infty$ is a domain in the Banach space $\ell^1$ of summable sequences. This  leads to bring in the Smirnov class $N_{*}(\mathbb{D}_1^\infty)$ for holomorphic functions on $\mathbb{D}_{1}^\infty$. Section 2 will be concerned with  the Smirnov class $N_{*}(\mathbb{D}_1^\infty)$ and the Nevanlinna class $N(\mathbb{D}_1^\infty)$, a larger class than  $N_{*}(\mathbb{D}_1^\infty)$.  It is shown that there is a natural correspondence  between the class $N_{*}(\mathbb{T}^\infty)$ and the  class $N_{*}(\mathbb{D}_1^\infty)$. For functions in the  class $N(\mathbb{D}_1^\infty)$, there exists  an analogue of the classical Fatou's theorem, that is, for every function $F\in N(\mathbb{D}_1^\infty)$, the radial limit $F^{*}(w)=\lim_{r\rightarrow1}F(rw_1,\ldots,r^{n}w_{n},\ldots)$ exists  for almost every $w\in\mathbb{T}^{\infty}$. Furthermore, if $F\ne 0$, then $\log|F^{*}|\in L^{1}(\mathbb{T}^\infty)$.

In Section 3, we apply the preceding results to Szeg\"{o}'s problem in infinitely many variables.
Let us first recall Szeg\"{o}'s theorem in one variable case \cite{Gam, Hof, Sze}.
Assume that $K$ is a nonnegative function in $L^1(\mathbb{T})$ with $\log K\in L^1(\mathbb{T})$, and $m_1$ is the normalized Lebesgue measure on $\mathbb{T}$.
Write $\mathbb{C}[z]$ for the ring of all one-variable analytic polynomials, and $\mathbb{C}_0[z]$ for the set of polynomials $q\in\mathbb{C}[z]$ with $q(0)=0$.
Szeg\"{o}'s theorem states that when $1<p<\infty$, the following equality holds:
$$\inf_{q\in\mathbb{C}_0[z]}\int_{\mathbb{T}}|1-q|^pK dm_1=\exp\left(\int_{\mathbb{T}}\log K dm_1\right).$$
In fact, this holds because such $K$ is exactly the modulus of an outer function \cite{Hof}.
Szeg\"{o}'s theorem has a profound influence in many areas, especially in the theory of orthogonal polynomials on the unit circle \cite{Bin, Sim, Sze},
and it can be regarded as the cornerstone of the development of the invariant subspace theory \cite{Gam}.
For more works relating to Szeg\"{o}'s theorem, we refer the reader to \cite{AS, BMW, GZ1, GZ2, Lab, Sar}.
When $p=2$, Nakazi gives an analogous version of Szeg\"{o}'s theorem in two-variable setting \cite{Nak}.
Whereas in the case of infinitely many variables, things become much more complicated.
Let $K$ be a nonnegative function in $L^1(\mathbb{T}^\infty)$ with $\log K\in L^1(\mathbb{T}^\infty)$.
Without loss of generality, we assume that $\|K\|_{1}=1$.
Write
$$S(K)=\inf_{q\in\mathcal{P}_0}\int_{\mathbb{T}^\infty}|1-q|^pK dm_\infty,$$
where $\mathcal{P}_0$ denotes the set of polynomials $q\in\mathcal{P}_\infty$ with $q(0)=0$.
It is easy to verify that $S(K)$ falls into the closed interval $[\exp\left(\int_{\mathbb{T}^\infty}\log K dm_{\infty}\right),1]$.
Naturally, the problem arises which is called Szeg\"{o}'s problem: For which function $K$, $S(K)$ attains the lower bound $\exp\left(\int_{\mathbb{T}^\infty}\log K dm_{\infty}\right)$ or the upper bound $1$?
We will give a complete answer to this problem in Theorem \ref{thm601} and Theorem \ref{thm603} by applying the previous results.
It is worth pointing out  this paper provides a general method which also applies to the finite-variable setting for Szeg\"{o}'s problem.

As in cases of  the Nevanlinna class and the Smirnov class in the infinite-variable setting, there exist
analogies for  Dirichlet series, that is,  we are concerned with the limit Hardy-Dirichlet space in the situation $ p\to 0^+$.
In Section 4,  we introduce the Smirnov-Dirichlet class $\mathcal{N}_{*}$, which is defined to be the completion of $\mathcal{P}_D$ in the metric
$$\|Q\|_0=\lim_{T\rightarrow\infty}\frac{1}{2T}\int_{-T}^{T}\log(1+|Q(it)|)dt,\quad Q\in\mathcal{P}_D.$$
Then $\mathcal{N}_{*}$ can be viewed as the limit space of $\mathcal{H}^p$ when $p\rightarrow0^{+}$.
Moreover, we draw a conclusion that there exists a canonical isometric algebra isomorphism from $\mathcal{N}_{*}$ onto the Smirnov class $N_{*}(\mathbb{D}_{1}^\infty)$ by the Bohr correspondence.
In order to study composition operators on spaces of Dirichlet series, Brevig and Perfekt \cite{BP} defined the class $\mathcal{N}_{u}$ of Dirichlet series $f$ with the abscissa of uniform convergence $\sigma_{u}(f)\leq0$ and
$$\limsup_{\sigma\rightarrow0^{+}}\lim_{T\rightarrow\infty}\frac{1}{2T}\int_{-T}^{T}\log^{+}|f(\sigma+it)|dt<\infty,$$
where $\log^{+}x=\max\{0,\log x\}$ for $x>0$.
This leads to an introduction of  the Nevanlinna-Dirichlet class $\mathcal{N}$, the completion of $\mathcal{N}_{u}$ in the metric
$$\|f\|_0=\limsup_{\sigma\rightarrow0^{+}}\lim_{T\rightarrow\infty}\frac{1}{2T}\int_{-T}^{T}\log(1+|f(\sigma+it)|)dt, \quad f\in\mathcal{N}_{u}.$$
We will prove that the Nevanlinna-Dirichlet class $\mathcal{N}$ can be isometrically embedded into the Nevanlinna class $N(\mathbb{D}_1^\infty)$ by the Bohr correspondence.

\section{The Nevanlinna class and the Smirnov class}

This section is devoted to introducing the Nevanlinna class and the Smirnov class in infinitely many variables.

\subsection{The Nevanlinna class and the Smirnov class over $\mathbb{D}_{1}^\infty$}

We begin with $\infty$-subharmonic functions on $\mathbb{D}_{1}^\infty$.
Motivated by \cite{Ru2}, an upper semicontinuous function $u:\mathbb{D}_{1}^{\infty}\rightarrow[-\infty,\infty)$ is called $\infty$-subharmonic, if $u$ is subharmonic in each variable separately. As the same in single variable,  if $u:\mathbb{D}_{1}^{\infty}\rightarrow[-\infty,\infty)$ is $\infty$-subharmonic, and  $\varphi$ is a non-decreasing convex function on the real line $\mathbb{R}$, then $\varphi\circ u$ is  $\infty$-subharmonic (setting $\varphi(-\infty)=\lim_{t\rightarrow-\infty}\varphi(t)$).

For each function $F$ on $\mathbb{D}_{1}^\infty$ and $0<r<1$, the function $F_{[r]}$ over the infinite torus $\mathbb{T}^\infty$ is defined by
$$F_{[r]}(w)=F(rw_1,\ldots,r^{n}w_n,\ldots),\quad w\in\mathbb{T}^\infty.$$
The following result will be used frequently in this paper.

\begin{prop}\label{thm2101}
If $u$ is $\infty$-subharmonic on $\mathbb{D}_{1}^\infty$, then integrals
$$I_r=\int_{\mathbb{T}^\infty}u_{[r]}dm_\infty\quad(0\leq r<1)$$
increase with $r$. Therefore, if $\{r_n\}_{n=1}^\infty$ is an increasing sequence in $(0,1)$ with $r_n\rightarrow1$ as $n\rightarrow\infty$, then
$\sup_{0<r<1}I_r=\sup_{n\in\mathbb{N}}I_{r_n}$.
\end{prop}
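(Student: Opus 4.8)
The plan is to prove the monotonicity statement $I_r\le I_{r'}$ for all $0\le r<r'<1$; the final assertion is then immediate, since a non-decreasing function of $r$ has the same supremum over $(0,1)$ as over any sequence $\{r_n\}$ increasing to $1$ (for each $r$ pick $r_n\ge r$, so $I_r\le I_{r_n}\le\sup_k I_{r_k}$, and conversely each $I_{r_n}\le\sup_{0<r<1}I_r$). The engine is the classical one-variable fact that for a subharmonic function $v$ on $\mathbb{D}$ the circular mean $\rho\mapsto\frac{1}{2\pi}\int_0^{2\pi}v(\rho e^{i\theta})\,d\theta$ is non-decreasing on $[0,1)$.

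To transfer this to infinitely many variables I would interpolate one coordinate at a time. For $N\ge0$ define the radius profile $\rho^{(N)}$ by $\rho^{(N)}_n=(r')^n$ for $n\le N$ and $\rho^{(N)}_n=r^n$ for $n>N$, and set $h_N(w)=u(\rho^{(N)}_1w_1,\rho^{(N)}_2w_2,\ldots)$ and $J_N=\int_{\mathbb{T}^\infty}h_N\,dm_\infty$. Thus $J_0=I_r$, while $\rho^{(N)}$ already agrees with the profile $((r')^n)_n$ of $I_{r'}$ on the first $N$ coordinates. Each profile has summable, strictly-less-than-one radii, so all arguments lie in the box $Q=\prod_{n\ge1}\{\zeta\in\mathbb{C}:|\zeta|\le(r')^n\}\subset\mathbb{D}_1^\infty$; since $u$ is upper semicontinuous and $Q$ is $\ell^1$-compact, every $h_N$ and also $u_{[r']}$ is bounded above by the single constant $C=\sup_Q u<\infty$, which makes all the integrals well-defined in $[-\infty,\infty)$.

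The first step is $J_N\le J_{N+1}$. The profiles $\rho^{(N)}$ and $\rho^{(N+1)}$ differ only in coordinate $N+1$ (radius $r^{N+1}$ versus $(r')^{N+1}$), so by Fubini I isolate that coordinate: for every fixed choice of the remaining coordinates the slice $\zeta\mapsto u(\ldots,\zeta,\ldots)$ is subharmonic on $\mathbb{D}$, and its circular mean at radius $(r')^{N+1}$ dominates the one at radius $r^{N+1}$ by the one-variable fact. Integrating the non-negative difference over the remaining variables gives $J_{N+1}\ge J_N$. Hence $I_r=J_0\le J_N$ for all $N$, and since $\{J_N\}$ is non-decreasing and bounded above by $C$, the limit $L:=\lim_N J_N$ exists and $I_r\le L$.

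The second, and I expect hardest, step is to bound $L$ by $I_{r'}$: this is where passing from finitely to infinitely many variables bites, and only upper semicontinuity, not continuity, is available. The key point is that the arguments converge in $\ell^1$ uniformly in $w$, because $\sum_{n>N}\big((r')^n-r^n\big)\to0$, so $(\rho^{(N)}_n w_n)_n\to((r')^n w_n)_n$ in $\ell^1$ for every $w\in\mathbb{T}^\infty$. Upper semicontinuity of $u$ then yields $\limsup_N h_N(w)\le u_{[r']}(w)$ pointwise; since $h_N\le C$ with $C$ integrable on the probability space $\mathbb{T}^\infty$, the reverse Fatou lemma gives $L=\limsup_N J_N\le\int_{\mathbb{T}^\infty}\limsup_N h_N\,dm_\infty\le\int_{\mathbb{T}^\infty}u_{[r']}\,dm_\infty=I_{r'}$. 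Combining, $I_r\le L\le I_{r'}$, which is the desired monotonicity. The only technical points needing care are the $\ell^1$-compactness of $Q$ (closed and bounded with uniformly small tails $\sum_{n>N}(r')^n\to0$, hence compact) to secure the uniform upper bound $C$, and the measurability of each $h_N$ as the composition of the $\ell^1$-continuous map $w\mapsto(\rho^{(N)}_n w_n)_n$ with the Borel function $u$; both are routine.
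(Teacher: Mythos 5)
Your proposal is correct and follows essentially the same route as the paper: the hybrid radius profiles $\rho^{(N)}$ are exactly the paper's interpolants $u_{[r,s,n]}(w)=u(sw_1,\ldots,s^nw_n,r^{n+1}w_{n+1},\ldots)$, the coordinate-by-coordinate use of the one-variable circular-mean monotonicity gives $I_r\le J_N$ just as in the paper, and the passage to the limit via $\ell^1$-compactness of the closed polydisk box, the resulting uniform upper bound from upper semicontinuity, and the reverse Fatou lemma is precisely the paper's concluding step. Your write-up is somewhat more explicit about the routine points (well-definedness of the integrals in $[-\infty,\infty)$, measurability, and the uniform $\ell^1$-convergence of the arguments), but there is no substantive difference.
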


\begin{proof}
Given $0\leq r\leq s<1$, it suffices to prove that $I_r\leq I_s$. For each $n\in\mathbb{N}$, write
$$u_{[r,s,n]}(w)=u(sw_1,\ldots,s^{n}w_n,r^{n+1}w_{n+1},r^{n+2}w_{n+2},\ldots),\quad w\in\mathbb{T}^\infty.$$
Using subharmonicity successively in the first $n$ variables, we obtain
$$I_r\leq\int_{\mathbb{T}^\infty}u_{[r,s,n]}dm_\infty.$$
Note that $s\overline{\mathbb{D}}\times\cdots\times s^n\overline{\mathbb{D}}\times\cdots$ is compact in $\ell^1$, where $\overline{\mathbb{D}}$ denotes the closed unit disk. The upper semicontinuity of $u$ implies that $\{u_{[r,s,n]}\}_{n=1}^\infty$ is uniformly bounded above on $\mathbb{T}^\infty$. Therefore, it follows from Fatou's lemma that
$$I_r\leq\limsup\limits_{n\rightarrow\infty}\int_{\mathbb{T}^\infty}u_{[r,s,n]}dm_\infty
\leq\int_{\mathbb{T}^\infty}\limsup\limits_{n\rightarrow\infty}u_{[r,s,n]}dm_\infty\leq I_s,$$
as desired.
\end{proof}

Let $\overline{\mathbb{D}}^\infty=\overline{\mathbb{D}}\times\overline{\mathbb{D}}\times\cdots$ denote the cartesian product of countably infinitely many closed unit disks $\overline{\mathbb{D}}$, then it is compact with respect to the product topology. The set of all continuous functions on $\overline{\mathbb{D}}^\infty$, denoted by $C(\overline{\mathbb{D}}^\infty)$, is a Banach algebra with the uniform norm. The following corollary is useful.

\begin{cor}\label{thm2101B}
Let $u\in C(\overline{\mathbb{D}}^\infty)$. If $u$ is $\infty$-subharmonic on $\mathbb{D}_{1}^\infty$, then for every $0<r<1$,
$$u(0)\leq\int_{\mathbb{T}^\infty}u_{[r]}dm_\infty\leq\int_{\mathbb{T}^\infty}udm_\infty.$$
\end{cor}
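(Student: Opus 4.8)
The plan is to extract both inequalities from the monotonicity established in Proposition \ref{thm2101}, by reading off the two endpoints of the increasing family $I_r=\int_{\mathbb{T}^\infty}u_{[r]}\,dm_\infty$ (note that Proposition \ref{thm2101} is stated for the full range $0\le r<1$, so the endpoint $r=0$ is included). Since $u\in C(\overline{\mathbb{D}}^\infty)$ and $\overline{\mathbb{D}}^\infty$ is compact, $u$ is bounded there, which is the only extra ingredient needed beyond Proposition \ref{thm2101}.

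For the left inequality, I would observe that at $r=0$ the point $(0\cdot w_1,0^2w_2,\ldots)$ is just the origin, so $u_{[0]}$ is the constant function $u(0)$ on $\mathbb{T}^\infty$ and hence $I_0=u(0)$. Because $m_\infty$ is a probability measure and $I_r$ increases with $r$, we get $u(0)=I_0\le I_r$ for every $0<r<1$, which is exactly the first inequality.

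The right inequality is the point requiring care. As $r\to1^-$, for each fixed $w\in\mathbb{T}^\infty$ the point $(rw_1,r^2w_2,\ldots)$ converges coordinatewise to $(w_1,w_2,\ldots)$, i.e.\ in the product topology of the compact space $\overline{\mathbb{D}}^\infty$; indeed, equipping $\overline{\mathbb{D}}^\infty$ with the compatible metric $d(x,y)=\sum_{j}2^{-j}|x_j-y_j|$ gives $d\bigl((rw_1,r^2w_2,\ldots),(w_1,w_2,\ldots)\bigr)=\sum_{j}2^{-j}(1-r^j)$, which is independent of $w$ and tends to $0$ as $r\to1$. Thus $u_{[r]}\to u$ uniformly on $\mathbb{T}^\infty$ by the (uniform) continuity of $u$. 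Since $u$ is bounded, the bounded convergence theorem yields $\lim_{r\to1^-}I_r=\int_{\mathbb{T}^\infty}u\,dm_\infty$. Combining this with the monotonicity from Proposition \ref{thm2101} gives $I_r\le\lim_{s\to1^-}I_s=\int_{\mathbb{T}^\infty}u\,dm_\infty$ for every $0<r<1$, completing the proof.

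The main (and essentially only) obstacle is the passage $r\to1^-$: one must invoke continuity of $u$ with respect to the \emph{product} topology on the compact set $\overline{\mathbb{D}}^\infty$ rather than the $\ell^1$-topology, together with the boundedness of $u$, to justify interchanging the limit with the integral. Once the convergence $u_{[r]}\to u$ on $\mathbb{T}^\infty$ is secured, both inequalities follow mechanically from the monotone behavior of $I_r$.
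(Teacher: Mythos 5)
Your proof is correct and follows essentially the same route as the paper: both read off $u(0)=I_0\leq I_r$ from the monotonicity in Proposition \ref{thm2101} and obtain the right-hand inequality by letting $s\rightarrow1$ and interchanging limit and integral. The only cosmetic difference is that you justify the interchange via uniform convergence of $u_{[s]}$ to $u$ (using the product-topology metric and uniform continuity on the compact set $\overline{\mathbb{D}}^\infty$) followed by bounded convergence, whereas the paper simply invokes Lebesgue's dominated convergence theorem, which amounts to the same pointwise convergence plus boundedness of $u$.
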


\begin{proof}
By Proposition \ref{thm2101}, for each $r<s<1$, we have
$$u(0)\leq\int_{\mathbb{T}^\infty}u_{[r]}dm_\infty\leq\int_{\mathbb{T}^\infty}u_{[s]}dm_\infty.$$
The desired corollary follows from Lebesgue's dominated convergence theorem by letting $s\rightarrow1$.
\end{proof}

Recall that a complex-valued function $F$ defined on an open subset $V$ of a Banach space $X$  is called holomorphic \cite{Din}, if $F$ satisfies the following two conditions: (i) $F$ is locally bounded. (ii) For each $x_0\in V$ and $x\in X$, the function $F(x_0+zx)$ is holomorphic in parameter $z$ for $x_0+zx\in V$. One easily checks that holomorphic functions are continuous.

Let $F$ be a holomorphic function on $\mathbb{D}_1^\infty$.
Since $\varphi(x)=\max\{0,x\}$ is a non-decreasing convex function on $\mathbb{R}$, $\varphi(\log|F|)=\log^{+}|F|$ is $\infty$-subharmonic.
Hence by Proposition \ref{thm2101},
\begin{equation}\label{equ2102}
\sup\limits_{0<r<1}\int_{\mathbb{T}^\infty}\log^+|F_{[r]}|dm_\infty=\lim_{r\rightarrow1}\int_{\mathbb{T}^\infty}\log^+|F_{[r]}|dm_\infty.
\end{equation}
The Nevanlinna class $N(\mathbb{D}_{1}^\infty)$ over $\mathbb{D}_{1}^\infty$ is defined to be the class of all holomorphic functions $F$ on $\mathbb{D}_{1}^{\infty}$ such that the value of (\ref{equ2102}) is finite.
The following proposition is needed in the sequel.

\begin{prop}\label{thmB2101}
Let $F$ be a nonzero holomorphic function on $\mathbb{D}_1^\infty$.
Then the following statements are equivalent:

(1) $F\in N(\mathbb{D}_1^\infty)$.

(2) $\sup_{0<r<1}\|F_{[r]}\|_0<\infty$.

(3) $\limsup_{r\rightarrow1}\big\|\log|F_{[r]}|\big\|_1<\infty$.
\end{prop}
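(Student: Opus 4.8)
My plan is to route everything through the elementary inequality $\log^{+}x\le\log(1+x)\le\log 2+\log^{+}x$, valid for all $x\ge 0$. Applying it pointwise with $x=|F_{[r]}(w)|$ and integrating over $\mathbb{T}^\infty$ gives
$$\int_{\mathbb{T}^\infty}\log^{+}|F_{[r]}|\,dm_\infty\ \le\ \|F_{[r]}\|_0\ \le\ \log 2+\int_{\mathbb{T}^\infty}\log^{+}|F_{[r]}|\,dm_\infty.$$
Taking the supremum over $0<r<1$ and recalling that the leftmost quantity is exactly the expression (\ref{equ2102}) defining $N(\mathbb{D}_1^\infty)$, the two outer terms differ by the constant $\log 2$, so $\sup_{r}\|F_{[r]}\|_0<\infty$ if and only if the value in (\ref{equ2102}) is finite. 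This settles the equivalence of (1) and (2) and is the routine part.

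For the rest I would set $J(r)=\int_{\mathbb{T}^\infty}\log^{+}|F_{[r]}|\,dm_\infty$ and $I(r)=\int_{\mathbb{T}^\infty}\log|F_{[r]}|\,dm_\infty$. Both $\log|F|$ and $\log^{+}|F|=\max\{0,\log|F|\}$ are $\infty$-subharmonic (the latter because $\max\{0,\cdot\}$ is non-decreasing and convex), so Proposition \ref{thm2101} shows $J(r)$ and $I(r)$ are non-decreasing in $r$. I would then record the pointwise identity $|\log|F_{[r]}||=2\log^{+}|F_{[r]}|-\log|F_{[r]}|$ (from $\log^{-}=\log^{+}-\log$ and $|\log|=\log^{+}+\log^{-}$), which integrates to
$$\big\|\log|F_{[r]}|\big\|_1=2J(r)-I(r).$$
Since $F$ is bounded on the compact polydisk $r\overline{\mathbb{D}}\times r^2\overline{\mathbb{D}}\times\cdots\subset\ell^1$, we always have $J(r)<\infty$. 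The implication (3)$\Rightarrow$(1) is now immediate: $\log^{+}\le|\log|$ gives $J(r)\le\big\|\log|F_{[r]}|\big\|_1$, and monotonicity turns $\limsup_{r\to1}\big\|\log|F_{[r]}|\big\|_1<\infty$ into $\sup_{r}J(r)=\lim_{r\to1}J(r)<\infty$, which is (1).

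The whole weight of the proposition therefore falls on (1)$\Rightarrow$(3), and by the boxed identity this reduces to bounding $I(r)$ from below. If $F(0)\ne0$, Proposition \ref{thm2101} gives $I(r)\ge I(0)=\log|F(0)|>-\infty$ for every $r$, and we are done. The genuine obstacle is the case $F(0)=0$, where $I(0)=-\infty$ and the mean-value bound is vacuous; here I must show directly that $I(r)>-\infty$, i.e. $\log|F_{[r]}|\in L^1(\mathbb{T}^\infty)$, for one (hence by monotonicity all large) $r$. The plan is a reduction to finitely many variables. Since $F\not\equiv0$ and $\mathbb{D}_1^\infty$ is connected, the identity theorem for holomorphic functions \cite{Din} lets me fix a finitely supported point $b=(b_1,\ldots,b_m,0,\ldots)$ with $|b_j|<r^j$ and $F(b)\ne0$. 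Writing $h_N(w)=\log|F(rw_1,\ldots,r^Nw_N,0,\ldots)|$ for $N\ge m$, each $h_N$ is subharmonic in each of its first $N$ variables, and iterating the one-variable Poisson sub-mean-value inequality on the circles of radii $r,r^2,\ldots,r^N$ (using that a positive average of subharmonic functions stays subharmonic, and that $h_N\le\log^{+}M$ with $M=\sup_{r\overline{\mathbb{D}}\times r^2\overline{\mathbb{D}}\times\cdots}|F|$) yields
$$\log|F(b)|\ \le\ \int_{\mathbb{T}^N}\Big(\prod_{j=1}^{m}P_{r^j}(b_j,r^jw_j)\Big)h_N\,dm_N,$$
the factors with $j>m$ being identically $1$ since $b_j=0$ there.

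The main computation I then carry out is to exploit that each Poisson factor is pinched between the positive constants $\frac{r^j-|b_j|}{r^j+|b_j|}$ and $\frac{r^j+|b_j|}{r^j-|b_j|}$, so the full weight $W$ satisfies $0<c\le W\le C$ with $\int_{\mathbb{T}^N}W\,dm_N=1$, where $c,C$ depend only on $b$ and $r$. Splitting $h_N=h_N^{+}-h_N^{-}$ and using $h_N^{+}\le\log^{+}M$ gives $c\int_{\mathbb{T}^N}h_N^{-}\,dm_N\le C\log^{+}M-\log|F(b)|$, hence $\int_{\mathbb{T}^N}h_N\,dm_N\ge-B$ with $B$ independent of $N$. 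Finally, since $h_N\to\log|F_{[r]}|$ pointwise (continuity of $F$ and $\ell^1$-convergence of the arguments) and $h_N\le\log^{+}M$, the reverse Fatou lemma over the probability space $\mathbb{T}^\infty$ gives $I(r)=\int_{\mathbb{T}^\infty}\log|F_{[r]}|\,dm_\infty\ge\limsup_{N}\int_{\mathbb{T}^\infty}h_N\,dm_\infty\ge-B>-\infty$. Feeding this lower bound back into $\big\|\log|F_{[r]}|\big\|_1=2J(r)-I(r)$, with $J(r)$ bounded by (1) and $I(r)$ increasing to a finite limit, shows $\lim_{r\to1}\big\|\log|F_{[r]}|\big\|_1=2\lim J(r)-\lim I(r)<\infty$, which is (3). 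I expect the iterated Poisson estimate producing the uniform-in-$N$ lower bound to be the delicate step, precisely because it is what rescues the argument when $F(0)=0$.
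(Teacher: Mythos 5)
Your proof is correct, and its outer skeleton coincides with the paper's: the equivalence $(1)\Leftrightarrow(2)$ via $\log^{+}x\le\log(1+x)\le\log^{+}x+\log 2$, the trivial direction $(3)\Rightarrow(1)$ via $\log^{+}\le|\log|$ and the monotonicity of $J(r)$ from Proposition \ref{thm2101}, and the identity $\big\|\log|F_{[r]}|\big\|_1=2J(r)-I(r)$ together with the monotonicity of $I(r)$ for $(1)\Rightarrow(3)$ are all exactly the paper's moves. The genuine divergence is in the single nontrivial ingredient: the paper obtains the lower bound on $I(r)$ by fixing $r_0$ and citing the fact that a nonzero holomorphic $F$ on $\mathbb{D}_1^\infty$ has $F_{[r_0]}\in A(\mathbb{D}^\infty)$ with $\log|F_{[r_0]}|\in L^1(\mathbb{T}^\infty)$, a result imported from \cite{AOS2, BP}, after which $\big\|\log|F_{[r]}|\big\|_1\le 2J(r)-I(r_0)$ for $r>r_0$ finishes matters; you instead prove from scratch exactly the portion of that imported fact you need, namely $I(r)>-\infty$, by locating a finitely supported nonvanishing point $b$, running the iterated Poisson sub-mean-value inequality on the finite truncations $h_N$, pinching the finitely many nontrivial Poisson factors between Harnack constants $0<c\le C<\infty$ to get the bound $\int h_N^{-}\,dm_N\le c^{-1}\bigl(C\log^{+}M-\log|F(b)|\bigr)$ uniformly in $N$, and passing to the limit by reverse Fatou under the upper bound $\log^{+}M$. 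What each approach buys: the paper's proof is short but rests on an external theorem about the infinite polydisk algebra, while yours is self-contained and in effect re-derives the needed case of that theorem, at the cost of the iteration step (which is legitimate but deserves a word of justification, e.g.\ approximating $\log|G_N|$ by the continuous plurisubharmonic functions $\tfrac12\log(|G_N|^2+\varepsilon^2)$ and applying Fubini before letting $\varepsilon\downarrow 0$). Two cosmetic points: your preliminary case split on $F(0)\ne 0$ is unnecessary, since the Poisson argument already covers it; and the order of quantifiers in choosing $b$ and $r$ should be stated carefully (choose $b$ first by density of finitely supported points and continuity of $F$, then $r$ close to $1$ with $|b_j|<r^j$ for $j\le m$, then invoke monotonicity of $I$ for larger $r$) — as written, fixing $r$ first still works, but it needs the finite-variable identity theorem applied to the Abschnitte rather than a bare appeal to density.
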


To prove Proposition \ref{thmB2101}, we need to introduce the infinite polydisk algebra \cite{CG}.
The infinite polydisk algebra $A(\mathbb{D}^\infty)$ is defined to be the closure of $\mathcal{P}_\infty$ in $C(\overline{\mathbb{D}}^\infty)$.
It is worth mentioning that if $F$ is a nonzero holomorphic function on $\mathbb{D}_1^\infty$,
then for every $0<r<1$, $F_{[r]}\in A(\mathbb{D}^\infty)$, and hence $\log|F_{[r]}|\in L^1(\mathbb{T}^\infty)$~\cite{AOS2, BP}.
On the other hand, we can also understand the infinite polydisk algebra by ``boundary-valued functions".
Let $A(\mathbb{T}^\infty)$ be the  norm-closure of $\mathcal{P}_\infty$ in $C(\mathbb{T}^\infty)$, the Banach space of all continuous functions on $\mathbb{T}^\infty$.
Since the infinite polydisk algebra $A(\mathbb{D}^\infty)$ is of the Shilov boundary $\mathbb{T}^\infty$,
one naturally identifies $A(\mathbb{D}^\infty)$ with $A(\mathbb{T}^\infty)$ by the restriction map.

\vskip2mm

\noindent\textbf{Proof of Proposition \ref{thmB2101}.} $(1)\Leftrightarrow(2)$: By inequalities
\begin{equation}\label{equ2102B}
\log^{+}x\leq\log(1+x)\leq\log^{+}x+\log2,\quad x\geq0,
\end{equation}
we see that (1) is equivalent to (2).

$(1)\Leftrightarrow(3)$: It suffices to show (1) implies (3).
Given $0<r_0<1$, since $F$ is a nonzero function, $\log|F_{[r_0]}|\in L^1(\mathbb{T}^\infty)$.
Then for every $r_0<r<1$,
$$\begin{aligned}
\big\|\log|F_{[r]}|\big\|_1&=2\int_{\mathbb{T}^\infty}\log^+|F_{[r]}|dm_\infty-\int_{\mathbb{T}^\infty}\log|F_{[r]}|dm_\infty\\
&\leq2\int_{\mathbb{T}^\infty}\log^+|F_{[r]}|dm_\infty-\int_{\mathbb{T}^\infty}\log|F_{[r_0]}|dm_\infty,
\end{aligned}$$
and hence
$$\limsup_{r\rightarrow1}\big\|\log|F_{[r]}|\big\|_1\leq2\lim_{r\rightarrow1}\int_{\mathbb{T}^\infty}\log^+|F_{[r]}|dm_\infty
-\int_{\mathbb{T}^\infty}\log|F_{[r_0]}|dm_\infty<\infty,$$
as desired. $\hfill\square$

\vskip2mm

 Let $F$ be a function on $\mathbb{D}_{1}^\infty$. For each $n\in\mathbb{N}$, we denote by $A_nF$ Bohr's $n$te Abschnitt of $F$, which is defined by
$$(A_nF)(\zeta)=F(\zeta_1,\ldots,\zeta_n,0,\ldots),\quad \zeta\in\mathbb{D}_{1}^{\infty}.$$
Since $A_nF$ does not depend on variables $\zeta_{j}\;(j>n)$, we may consider it as a function defined on the polydisk $\mathbb{D}^{n}$. For each $w\in\mathbb{T}^\infty$, the slice function of $F$ at $w$ is defined by
$$F_{w}(z)=F(z w_1,\ldots,z^{n}w_n,\ldots),\quad z\in\mathbb{D}.$$
If $F$ is holomorphic on $\mathbb{D}_{1}^{\infty}$, and $w\in\mathbb{T}^\infty$, then for every $n\in\mathbb{N}$,
$$(A_nF)_{w}(z)=F(z w_1,\ldots,z^{n}w_n,0,\ldots),\quad z\in\mathbb{D}$$
is holomorphic on $\mathbb{D}$.
Note that $\{(A_nF)_{w}\}_{n=1}^{\infty}$ converges to $F_{w}$ uniformly on each compact subset of $\mathbb{D}$.
It implies that $F_{w}$ is holomorphic on $\mathbb{D}$.

Let $N(\mathbb{D})$ be the Nevanlinna class over the open unit disk, then for every $F\in N(\mathbb{D})$, the radial limit $F^{*}(\lambda)=\lim_{r\rightarrow1}F(r\lambda)$ exists for almost every $\lambda\in\mathbb{T}$, see \cite[pp.346]{Ru4}.
Now we give the following theorem for which its second part is an analogue of the classical Fatou's theorem.

\begin{thm}\label{thm2102}
Let $F$ be a nonzero function in $N(\mathbb{D}_{1}^\infty)$, then for almost every $w\in\mathbb{T}^{\infty}$, $F_{w}\in N(\mathbb{D})$.
Furthermore, for almost every $w\in\mathbb{T}^{\infty}$, the radial limit
$$F^{*}(w)=\lim_{r\rightarrow1}F_{[r]}(w)$$
exists, and $\log|F^{*}|\in L^1(\mathbb{T}^\infty)$.
\end{thm}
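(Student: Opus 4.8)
The plan is to reduce everything to the one-variable Nevanlinna theory through the slice functions $F_w$, exploiting the fact that the radial limit in question is a slice evaluated along the positive real axis. Observe first that $F_{[r]}(w)=F(rw_1,\ldots,r^nw_n,\ldots)=F_w(r)$, so $F^{*}(w)=\lim_{r\rightarrow1}F_w(r)$ is precisely the radial limit of the holomorphic slice $F_w$ at the boundary point $1\in\mathbb{T}$. The key device is the rotation $\rho_\lambda(w)=(\lambda w_1,\lambda^2w_2,\ldots,\lambda^nw_n,\ldots)$ for $\lambda\in\mathbb{T}$: a direct computation gives $F_{\rho_\lambda(w)}(z)=F_w(\lambda z)$, and each $\rho_\lambda$ preserves the Haar measure $m_\infty$ since it acts coordinatewise by rotations.

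First I would show that $F_w\in N(\mathbb{D})$ for almost every $w$. For fixed $w$ the circular means $r\mapsto\int_{\mathbb{T}}\log^{+}|F_w(r\lambda)|\,dm_1(\lambda)$ increase with $r$, because $\log^{+}|F_w|$ is subharmonic on $\mathbb{D}$. Using $F_w(r\lambda)=F_{[r]}(\rho_\lambda(w))$, Tonelli's theorem, and the $m_\infty$-invariance of $\rho_\lambda$, one computes
\[
\int_{\mathbb{T}^\infty}\int_{\mathbb{T}}\log^{+}|F_w(r\lambda)|\,dm_1(\lambda)\,dm_\infty(w)=\int_{\mathbb{T}^\infty}\log^{+}|F_{[r]}|\,dm_\infty\leq M,
\]
where $M$ is the finite supremum $(\ref{equ2102})$ defining membership in $N(\mathbb{D}_1^\infty)$. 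Letting $r\rightarrow1$ through a sequence, monotone convergence then bounds the integral over $w$ of $g(w):=\sup_{0<r<1}\int_{\mathbb{T}}\log^{+}|F_w(r\lambda)|\,dm_1(\lambda)$ by $M$, so $g(w)<\infty$, i.e. $F_w\in N(\mathbb{D})$, for almost every $w$.

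Next I would upgrade this to existence of $F^{*}(w)$ for almost every $w$. For each such $w$ the classical Fatou theorem gives that $\lim_{r\rightarrow1}F_w(r\lambda)$ exists for a.e. $\lambda\in\mathbb{T}$; by Tonelli the measurable set $E=\{(w,\lambda):\lim_{r\rightarrow1}F_w(r\lambda)\text{ exists}\}$ is then co-null in $\mathbb{T}^\infty\times\mathbb{T}$. Writing $B=\{v\in\mathbb{T}^\infty:\lim_{r\rightarrow1}F_{[r]}(v)\text{ exists}\}$ and using $F_w(r\lambda)=F_{[r]}(\rho_\lambda(w))$, we have $(w,\lambda)\in E$ iff $\rho_\lambda(w)\in B$; since $\rho_\lambda$ is measure-preserving, Fubini yields $m_\infty(B^c)=(m_\infty\times m_1)(E^c)=0$. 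Thus $\lim_{r\rightarrow1}F_{[r]}(w)=F^{*}(w)$ exists for almost every $w$. The main obstacle is exactly this step: the classical theorem only provides radial limits at almost every boundary point of each slice, never at the prescribed point $1$, and the rotation-plus-Fubini argument is what converts the former into a statement about the single direction defining $F^{*}$.

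Finally, for integrability of $\log|F^{*}|$ I would apply Fatou's lemma twice along a sequence $r_n\rightarrow1$ realizing the limits above. Since $\log^{+}|F_{[r_n]}|\rightarrow\log^{+}|F^{*}|$ a.e. and $\int_{\mathbb{T}^\infty}\log^{+}|F_{[r_n]}|\,dm_\infty\leq M$, Fatou gives $\int_{\mathbb{T}^\infty}\log^{+}|F^{*}|\,dm_\infty\leq M$. For the negative part $\log^{-}x=\max\{0,-\log x\}$, Proposition \ref{thmB2101}(3) bounds $\limsup_{r\rightarrow1}\big\|\log|F_{[r]}|\big\|_1$, hence $\int_{\mathbb{T}^\infty}\log^{-}|F_{[r_n]}|\,dm_\infty$ is bounded; as $\log^{-}|F_{[r_n]}|\rightarrow\log^{-}|F^{*}|$ a.e., Fatou gives $\int_{\mathbb{T}^\infty}\log^{-}|F^{*}|\,dm_\infty<\infty$. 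In particular $\log^{-}|F^{*}|$ is finite a.e., so $F^{*}\neq0$ a.e., and combining the two bounds yields $\log|F^{*}|\in L^1(\mathbb{T}^\infty)$.
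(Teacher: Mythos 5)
Your proposal is correct and takes essentially the same route as the paper: your rotation $\rho_\lambda(w)$ is exactly the paper's map $w\mapsto\lambda\star w$, your invariance-plus-Fubini computation is precisely the content of the paper's Lemma \ref{thm2102B}, and the slice/monotone-convergence argument for $F_w\in N(\mathbb{D})$ together with the final Fatou-plus-Proposition \ref{thmB2101}(3) step coincide with the paper's proof. The only cosmetic difference is that you run the almost-everywhere existence step through the product-measurable set $E\subset\mathbb{T}^\infty\times\mathbb{T}$ and Fubini, where the paper instead integrates $\chi_E(\lambda\star w)$ over the set $S$ of good slices; these are the same argument.
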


The next corollary  implies that every function $F\in N(\mathbb{D}_{1}^\infty)$ can be uniquely determined by its ``boundary-valued function" $F^{*}$.

\begin{cor}
Let $F\in N(\mathbb{D}_{1}^\infty)$. If $F^*$ vanishes on some subset of $\mathbb{T}^\infty$ with positive measure, then $F=0$.
\end{cor}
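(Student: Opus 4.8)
The plan is to prove the contrapositive: assuming $F \neq 0$, I will show that the zero set of $F^{*}$ is necessarily a null set, which rules out $F^{*}$ vanishing on any set of positive measure. The entire content rests on the second part of Theorem \ref{thm2102}, which I may invoke directly: for a nonzero $F \in N(\mathbb{D}_{1}^\infty)$, the radial limit $F^{*}$ exists almost everywhere and, crucially, $\log|F^{*}| \in L^{1}(\mathbb{T}^\infty)$.

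The one substantive step is to extract from $L^1$-integrability that $F^{*}$ cannot vanish on a large set. Since $\log|F^{*}|$ is integrable, it is finite for almost every $w \in \mathbb{T}^\infty$; in particular $\log|F^{*}(w)| > -\infty$, equivalently $|F^{*}(w)| > 0$, for almost every $w$. Hence the set $\{w \in \mathbb{T}^\infty : F^{*}(w) = 0\}$, on which $\log|F^{*}|$ would take the value $-\infty$, must have measure zero. This directly contradicts the hypothesis that $F^{*}$ vanishes on a subset of $\mathbb{T}^\infty$ with positive measure, and therefore the assumption $F \neq 0$ is untenable, forcing $F = 0$.

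I do not anticipate any genuine obstacle here: once Theorem \ref{thm2102} is in hand, the corollary is an immediate consequence of the elementary fact that an integrable function is finite almost everywhere. The only point that warrants a word of care is the identification of the zero set of $F^{*}$ with the set on which $\log|F^{*}| = -\infty$, together with the observation that $F^{*}$ is well-defined almost everywhere precisely because $F$ is nonzero; both are supplied by the statement of Theorem \ref{thm2102}.
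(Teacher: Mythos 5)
Your proposal is correct and coincides with the argument the paper intends: the corollary is stated as an immediate consequence of Theorem \ref{thm2102}, since for nonzero $F$ the integrability of $\log|F^{*}|$ forces $\log|F^{*}|>-\infty$, i.e. $F^{*}\neq0$, almost everywhere on $\mathbb{T}^\infty$. Your contrapositive formulation and the care taken about where $F^{*}$ is defined are exactly right, and nothing further is needed.
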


For every $z\in\overline{\mathbb{D}}$ and $\zeta\in\overline{\mathbb{D}}^\infty$, set
\begin{equation}\label{equ2102C}
z\star\zeta=(z \zeta_1,\ldots,z^n \zeta_n,\ldots)\in\overline{\mathbb{D}}^\infty.
\end{equation}
To prove Theorem \ref{thm2102}, we need the following lemma, an analogue of \cite[Lemma 3.3.2]{Ru2}.

\begin{lem}\label{thm2102B}
Let $f$ be a nonnegative measurable function on $\mathbb{T}^\infty$. Then
$$\int_{\mathbb{T}^\infty}f(w)dm_\infty(w)=\int_{\mathbb{T}^\infty}\int_{\mathbb{T}}f(\lambda\star w)dm_1(\lambda)dm_\infty(w).$$
\end{lem}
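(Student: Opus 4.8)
The plan is to recognize the map $w \mapsto \lambda \star w$, for each fixed $\lambda \in \mathbb{T}$, as a translation on the compact abelian group $\mathbb{T}^\infty$, to exploit the translation-invariance of the Haar measure $m_\infty$, and then to interchange the order of integration by Tonelli's theorem. The heart of the matter is that the inner average over $\mathbb{T}$ does no harm precisely because each slice operation is measure-preserving.

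First I would fix $\lambda \in \mathbb{T}$ and set $g_\lambda = (\lambda, \lambda^2, \lambda^3, \ldots)$. Since each $\lambda^n$ lies in $\mathbb{T}$, we have $g_\lambda \in \mathbb{T}^\infty$, and by the definition (\ref{equ2102C}) of $\star$ one checks coordinatewise that $\lambda \star w = g_\lambda \cdot w$, where $\cdot$ denotes the coordinatewise (group) multiplication on $\mathbb{T}^\infty$. Thus $w \mapsto \lambda \star w$ is nothing but left translation by $g_\lambda$. Because $m_\infty$ is the Haar measure on $\mathbb{T}^\infty$, it is translation-invariant, so for every $\lambda \in \mathbb{T}$,
$$\int_{\mathbb{T}^\infty} f(\lambda \star w)\,dm_\infty(w) = \int_{\mathbb{T}^\infty} f(w)\,dm_\infty(w); \qquad (\ast)$$
here the invariance passes from indicators to all nonnegative measurable $f$ by the monotone convergence theorem.

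Next I would secure joint measurability so that Tonelli applies. The map $\Phi \colon \mathbb{T} \times \mathbb{T}^\infty \to \mathbb{T}^\infty$, $\Phi(\lambda, w) = \lambda \star w$, is continuous, since each coordinate $(\lambda, w) \mapsto \lambda^n w_n$ is continuous and continuity into the product topology is coordinatewise; hence $\Phi$ is Borel measurable and $(\lambda, w) \mapsto f(\lambda \star w)$ is a nonnegative measurable function on the product. Tonelli's theorem then permits interchanging the order of integration, and combining with $(\ast)$ and $m_1(\mathbb{T}) = 1$ gives
$$\int_{\mathbb{T}^\infty}\!\int_{\mathbb{T}} f(\lambda \star w)\,dm_1(\lambda)\,dm_\infty(w) = \int_{\mathbb{T}}\Big(\int_{\mathbb{T}^\infty} f(\lambda \star w)\,dm_\infty(w)\Big)dm_1(\lambda) = \int_{\mathbb{T}}\Big(\int_{\mathbb{T}^\infty} f\,dm_\infty\Big)dm_1(\lambda) = \int_{\mathbb{T}^\infty} f\,dm_\infty,$$
which is exactly the asserted identity.

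The step requiring the most care is the measurability bookkeeping when $f$ is measurable only with respect to the $m_\infty$-completion rather than the Borel $\sigma$-algebra. Two points must then be checked: that $(\ast)$ survives, which holds because the measure-preserving homeomorphism $T_\lambda(w)=\lambda\star w$ and its inverse carry $m_\infty$-null sets to $m_\infty$-null sets; and that the product integrand remains measurable for Tonelli. The cleanest way to handle the latter, and the route I would ultimately take to make everything fully rigorous, is to first establish the identity for $f \in C(\mathbb{T}^\infty)$ (or for the characters $w \mapsto w^\alpha$, where both sides can be computed explicitly and reduce to $\int_{\mathbb{T}}\lambda^{\sum_j j\alpha_j}\,dm_1(\lambda)$), and then to extend successively to indicators of open sets, to Borel sets by a monotone class argument, to nonnegative Borel functions by monotone convergence, and finally to the completion using null-set invariance. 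The translation-invariance core of the argument is elementary; it is only this measure-theoretic extension that demands attention.
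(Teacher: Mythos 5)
Your proposal is correct and follows essentially the same route as the paper: the paper likewise observes that $w\mapsto\lambda\star w$ is a rotation under which the Haar measure $m_\infty$ is invariant, integrates the resulting identity over $\lambda\in\mathbb{T}$, and interchanges the order of integration by Fubini's theorem. Your additional verification of joint measurability of $(\lambda,w)\mapsto f(\lambda\star w)$ (and the extension from Borel functions to the completion) only makes explicit a point the paper passes over in silence, so it is a refinement of detail rather than a different argument.
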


\begin{proof}
Since the Haar measure $m_\infty$ is rotation-invariant, for every $\lambda\in\mathbb{T}$,
$$\int_{\mathbb{T}^\infty}f(w)dm_\infty(w)=\int_{\mathbb{T}^\infty}f(\lambda\star w)dm_\infty(w).$$
Integrating with respect to $\lambda$ over $\mathbb{T}$ and applying Fubini's theorem yield that
$$\int_{\mathbb{T}^\infty}f(w)dm_\infty(w)=\int_{\mathbb{T}}\int_{\mathbb{T}^\infty}f(\lambda\star w)dm_\infty(w)dm_1(\lambda)
=\int_{\mathbb{T}^\infty}\int_{\mathbb{T}}f(\lambda\star w)dm_1(\lambda)dm_\infty(w),$$
which completes the proof.
\end{proof}

\noindent\textbf{Proof of Theorem \ref{thm2102}.} We use ideas from \cite[Theorem 3.3.3]{Ru2} to complete the proof.
For every fixed $w\in\mathbb{T}^{\infty}$, $\log^{+}|F_{w}|$ is subharmonic on $\mathbb{D}$, and hence the integrals
$$I_{w,r}=\int_{\mathbb{T}}\log^{+}|F_{w}(r\lambda)|dm_{1}(\lambda)\quad(0<r<1)$$
increase with $r$. Then the monotone convergence theorem gives
\begin{equation}\label{equ2103}
\begin{aligned}
\int_{\mathbb{T}^\infty}\left(\sup\limits_{0<r<1}I_{w,r}\right)dm_\infty(w)
&=\sup\limits_{0<r<1}\int_{\mathbb{T}^\infty}\int_{\mathbb{T}}\log^{+}|F_{w}(r\lambda)|dm_{1}(\lambda)dm_\infty(w)\\
&=\sup\limits_{0<r<1}\int_{\mathbb{T}^\infty}\int_{\mathbb{T}}\log^{+}|F_{[r]}(\lambda\star w)|dm_{1}(\lambda)dm_\infty(w).
\end{aligned}
\end{equation}
Applying Lemma \ref{thm2102B} to $\log^{+}|F_{[r]}|$, we have
$$\sup\limits_{0<r<1}\int_{\mathbb{T}^\infty}\int_{\mathbb{T}}\log^{+}|F_{[r]}(\lambda\star w)|dm_{1}(\lambda)dm_\infty(w)
=\sup\limits_{0<r<1}\int_{\mathbb{T}^\infty}\log^{+}|F_{[r]}(w)|dm_\infty(w)<\infty.$$
And hence by (\ref{equ2103}), for almost every $w\in\mathbb{T}^{\infty}$, $\sup_{0<r<1}I_{w,r}$ is finite, which implies that $F_{w}\in N(\mathbb{D})$ for such $w$.

Let $E$ be the set of points $w\in\mathbb{T}^{\infty}$ for which $F^{*}(w)=\lim_{r\rightarrow1}F_{[r]}(w)$ exists. Then $E$ is a measurable set. Put
$$S=\left\{w\in\mathbb{T}^\infty: \lim_{r\rightarrow1}F_{w}(r\lambda)\;\text{exists for almost every}\;\lambda\in\mathbb{T}\right\}.$$
Since for almost every $w\in\mathbb{T}^\infty$, $F_{w}\in N(\mathbb{D})$, we see that $S$ is a measurable set, and $m_\infty(S)=1$.
Furthermore, for every fixed $w\in S$, $\lambda\star w\in E$ for almost every $\lambda\in\mathbb{T}$, and thus
$$\int_{\mathbb{T}}\chi_E(\lambda\star w)dm_1(\lambda)=1,$$
where $\chi_E$ denotes the characteristic function of $E$. Moreover,  applying Lemma \ref{thm2102B} to $\chi_E$ gives
$$\begin{aligned}
m_\infty(E)&=\int_{\mathbb{T}^{\infty}}\chi_E(w)dm_\infty(w)\\
&=\int_{\mathbb{T}^{\infty}}\int_{\mathbb{T}}\chi_E(\lambda\star w)dm_1(\lambda)dm_\infty(w)\\
&\geq\int_{S}\int_{\mathbb{T}}\chi_E(\lambda\star w)dm_1(\lambda)dm_\infty(w)\\
&=\int_{S}1dm_\infty(w)\\
&=1,
\end{aligned}$$
forcing $m_\infty(E)=1$. Then for almost every $w\in\mathbb{T}^{\infty}$, $F^{*}(w)=\lim_{r\rightarrow1}F_{[r]}(w)$ exists, and it follows from Proposition \ref{thmB2101} and Fatou's lemma that
$$\big\|\log|F^{*}|\big\|_1\leq\liminf_{r\rightarrow1}\big\|\log|F_{[r]}|\big\|_1<\infty,$$
which implies that $\log|F^{*}|\in L^1(\mathbb{T}^\infty)$. $\hfill\square$

\vskip2mm

As done in the finite-variable setting, we characterize Nevanlinna functions via the $\infty$-harmonic majorant.
Motivated by \cite{Ru2}, a continuous function $u:\mathbb{D}_1^\infty\rightarrow\mathbb{C}$ is called $\infty$-harmonic, if $u$ is harmonic in each variable separately.
The harmonic Hardy space $h^1(\mathbb{D}_1^\infty)$ over $\mathbb{D}_1^\infty$ is defined to be
$$h^1(\mathbb{D}_1^\infty)=\left\{F\;\text{is}\; \infty\text{-harmonic on}\;\mathbb{D}_1^\infty: \|F\|_{h}=\sup_{0<r<1}\int_{\mathbb{T}^\infty}|F_{[r]}|dm_\infty<\infty\right\}.$$
As in finite-variable cases, each function in $h^1(\mathbb{D}_1^\infty)$ can be represented by the Poisson integral of a complex regular Borel measure on $\mathbb{T}^\infty$.
To show this, we recall some notions of Poisson kernels in infinitely many variables, see \cite{CG}.
For each $\zeta\in\mathbb{D}_{1}^\infty$, the Poisson kernel at $\zeta$ is defined to be
$$\mathbf{P}_\zeta(w)=\prod_{n=1}^{\infty}\mathrm{P}_{\zeta_n}(w_n),\quad w\in\mathbb{T}^\infty,$$
where
$$\mathrm{P}_{\zeta_{n}}(w_{n})=\frac{1-|\zeta_n|^2}{|\zeta_n-w_n|^2},\quad n\in\mathbb{N}$$
are Poisson kernels for the unit disk.
It is easy to prove that when $\zeta\in\mathbb{D}_{1}^\infty$, $\mathbf{P}_\zeta$ is continuous on $\mathbb{T}^\infty$.
Furthermore, if $u\in C(\overline{\mathbb{D}}^\infty)$ is $\infty$-harmonic on $\mathbb{D}_1^\infty$, then for every $\zeta\in\mathbb{D}_{1}^\infty$,
\begin{equation}\label{equB21012}
u(\zeta)=\int_{\mathbb{T}^\infty}\mathbf{P}_\zeta udm_\infty.
\end{equation}
Let $M(\mathbb{T}^\infty)$ denote the Banach space of all complex regular Borel measures $\mu$ on $\mathbb{T}^\infty$ with the norm $\|\mu\|_M=|\mu|(\mathbb{T}^\infty)$.
For every $\mu\in M(\mathbb{T}^\infty)$, set
$$\mathrm{P}[d\mu](\zeta)=\int_{\mathbb{T}^\infty}\mathbf{P}_\zeta d\mu,\quad\zeta\in\mathbb{D}_1^\infty.$$
Then $\mathrm{P}[d\mu]$ is $\infty$-harmonic on $\mathbb{D}_1^\infty$, and by Fubini's theorem, $\|\mathrm{P}[d\mu]\|_h\leq\|\mu\|_M<\infty$,
and hence $\mathrm{P}[d\mu]\in h^1(\mathbb{D}_1^\infty)$.
Moreover, $\mathrm{P}[d\mu]=0$ implies $\mu=0$.

\begin{prop}
The Poisson integral $\mu\mapsto\mathrm{P}[d\mu]$ establishes an isometric isomorphism from $M(\mathbb{T}^\infty)$ onto $h^1(\mathbb{D}_1^\infty)$, and hence $h^1(\mathbb{D}_1^\infty)$ is a Banach space.
\end{prop}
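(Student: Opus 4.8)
The plan is to leverage the two facts already in hand — that $\mu\mapsto\mathrm{P}[d\mu]$ maps into $h^1(\mathbb{D}_1^\infty)$ with $\|\mathrm{P}[d\mu]\|_h\le\|\mu\|_M$, and that it is injective — and to reduce everything to one additional statement: \emph{every} $F\in h^1(\mathbb{D}_1^\infty)$ equals $\mathrm{P}[d\mu]$ for some $\mu\in M(\mathbb{T}^\infty)$ with $\|\mu\|_M\le\|F\|_h$. Once this is proved, surjectivity is immediate, and the isometry follows formally: given $\mu_0$, put $F=\mathrm{P}[d\mu_0]$; the additional statement produces $\mu_1$ with $F=\mathrm{P}[d\mu_1]$ and $\|\mu_1\|_M\le\|F\|_h$, injectivity forces $\mu_1=\mu_0$, and therefore $\|\mu_0\|_M\le\|\mathrm{P}[d\mu_0]\|_h\le\|\mu_0\|_M$. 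So I would concentrate entirely on that one statement.

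The engine is the dilation $F^{(r)}(\zeta)=F(r\star\zeta)$ for $0<r<1$, with $\star$ as in \eqref{equ2102C}. First I would verify that $F^{(r)}\in C(\overline{\mathbb{D}}^\infty)$ and is $\infty$-harmonic on $\mathbb{D}_1^\infty$: the map $\zeta\mapsto r\star\zeta$ sends $\overline{\mathbb{D}}^\infty$ into the set $r\overline{\mathbb{D}}\times r^2\overline{\mathbb{D}}\times\cdots$, which is compact in $\ell^1$ (as noted in the proof of Proposition \ref{thm2101}), so $F^{(r)}$ is the composition of the continuous $F$ with this map and hence continuous, while separate harmonicity is preserved because scaling each coordinate is holomorphic. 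Since $F^{(r)}\in C(\overline{\mathbb{D}}^\infty)$ is $\infty$-harmonic, the reproducing identity \eqref{equB21012} applies and, because $F^{(r)}$ restricts to $F_{[r]}$ on $\mathbb{T}^\infty$, yields $F^{(r)}=\mathrm{P}[F_{[r]}\,dm_\infty]$. Moreover $\|F_{[r]}\|_1\le\|F\|_h$ for every $r$.

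Next I would pass to the boundary. The measures $d\mu_r=F_{[r]}\,dm_\infty$ lie in the ball of radius $\|F\|_h$ in $M(\mathbb{T}^\infty)=C(\mathbb{T}^\infty)^{*}$, so by the Banach--Alaoglu theorem some sequence $r_n\to1$ gives $\mu_{r_n}\to\mu$ weak$^{*}$, with $\|\mu\|_M\le\|F\|_h$ by weak$^{*}$ lower semicontinuity of the total variation norm. Fix $\zeta\in\mathbb{D}_1^\infty$. Since $\mathbf{P}_\zeta\in C(\mathbb{T}^\infty)$, one gets $F^{(r_n)}(\zeta)=\int_{\mathbb{T}^\infty}\mathbf{P}_\zeta\,d\mu_{r_n}\to\int_{\mathbb{T}^\infty}\mathbf{P}_\zeta\,d\mu=\mathrm{P}[d\mu](\zeta)$. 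On the other hand $r_n\star\zeta\to\zeta$ in $\ell^1$, because $\sum_k(1-r_n^{\,k})|\zeta_k|\to0$ by dominated convergence (the terms are dominated by the summable $|\zeta_k|$), so continuity of $F$ gives $F^{(r_n)}(\zeta)\to F(\zeta)$. Comparing the two limits yields $F(\zeta)=\mathrm{P}[d\mu](\zeta)$ for every $\zeta$, which is exactly the additional statement.

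The main obstacle I anticipate is the very first technical point, namely verifying that $F^{(r)}$ is genuinely continuous on all of $\overline{\mathbb{D}}^\infty$ so that \eqref{equB21012} is legitimately applicable. The subtlety is that $F$ is continuous only for the $\ell^1$-norm topology, whereas $\overline{\mathbb{D}}^\infty$ carries the product topology, and the two must be reconciled on the range of the dilation. I would resolve this by observing that on the compact set $r\overline{\mathbb{D}}\times r^2\overline{\mathbb{D}}\times\cdots$ the $\ell^1$-norm and product topologies coincide (a continuous bijection from a compact space onto a Hausdorff space is a homeomorphism), whence the dilation $\overline{\mathbb{D}}^\infty\to\ell^1$ is continuous and $F^{(r)}$ inherits continuity. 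Everything else — the Riesz identification $M(\mathbb{T}^\infty)=C(\mathbb{T}^\infty)^{*}$, weak$^{*}$ compactness, and lower semicontinuity of the norm — is standard.
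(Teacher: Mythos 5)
Your proposal is correct and follows essentially the same route as the paper's proof: dilate $F$ to $F_{[r]}$, apply the reproducing identity \eqref{equB21012}, extract a weak$^{*}$ limit $\mu$ of the measures $F_{[r_n]}\,dm_\infty$ via Banach--Alaoglu, identify $F=\mathrm{P}[d\mu]$ pointwise, and get $\|\mu\|_M\le\|F\|_h$ from weak$^{*}$ lower semicontinuity of the norm. The only difference is that you make explicit two technical points the paper leaves implicit — the continuity of the dilated function on $\overline{\mathbb{D}}^\infty$ (via the coincidence of the product and $\ell^1$ topologies on the compact set $r\overline{\mathbb{D}}\times r^2\overline{\mathbb{D}}\times\cdots$) and the convergence $r_n\star\zeta\to\zeta$ in $\ell^1$ — both of which you handle correctly.
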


\begin{proof}
By the arguments above, the map $\mu\mapsto\mathrm{P}[d\mu]$ is a one-to-one contraction.
On the other hand, for every $F\in h^1(\mathbb{D}_1^\infty)$ and $0<r<1$, write $d\mu_r=F_{[r]}dm_\infty$.
Then it follows from Banach-Alaoglu theorem that there exist a sequence $r_n\rightarrow1\;(n\rightarrow\infty)$ and a complex Borel measure $\mu_F$ on $\mathbb{T}^\infty$ such that $\{\mu_{r_n}\}_{n=1}^\infty$ converges to $\mu_F$ in the weak$^{*}$-topology, in the dual space of $C(\mathbb{T}^\infty)$.
Combining this fact with (\ref{equB21012}) shows that for every $\zeta\in\mathbb{D}_1^\infty$,
$$\mathrm{P}[d\mu_F](\zeta)=\int_{\mathbb{T}^\infty}\mathbf{P}_\zeta d\mu_F
=\lim_{n\rightarrow\infty}\int_{\mathbb{T}^\infty}\mathbf{P}_\zeta F_{[r_n]}dm_\infty=\lim_{n\rightarrow\infty}F_{[r_n]}(\zeta)=F(\zeta),$$
which implies that $F=\mathrm{P}[d\mu_F]$, and thus the map $\mu\mapsto\mathrm{P}[d\mu]$ is onto.
Furthermore, we have
$$\|\mu_F\|_M\leq\liminf_{n\rightarrow\infty}\|\mu_{r_n}\|_M\leq\sup_{0<r<1}\int_{\mathbb{T}^\infty}|F_{[r]}|dm_\infty=\|F\|_h,$$
and hence the map $\mu\mapsto\mathrm{P}[d\mu]$ is an isometry.
\end{proof}

Recall that $\mathbf{P}_\zeta dm_\infty$ is a Jensen measure for $\zeta\in\mathbb{D}_{1}^\infty$ with respect to $A(\mathbb{D}^\infty)$, that is,
\begin{equation}\label{equB21011}
\log|F(\zeta)|\leq\int_{\mathbb{T}^\infty}\mathbf{P}_\zeta \log|F|dm_\infty,\quad F\in A(\mathbb{D}^\infty),
\end{equation}
see \cite{CG}.
This says that
$$G(\zeta)=\int_{\mathbb{T}^\infty}\mathbf{P}_\zeta \log|F|dm_\infty,\quad\zeta\in\mathbb{D}_{1}^\infty$$
is an $\infty$-harmonic majorant of $\log|F|$.
The next proposition gives a characterization of functions in $N(\mathbb{D}_{1}^\infty)$ via $\infty$-harmonic majorant.

\begin{prop}\label{thmB2103}
Let $F$ be a holomorphic function on $\mathbb{D}_{1}^\infty$. Then the following statements are equivalent:

(1) $F\in N(\mathbb{D}_{1}^\infty)$.

(2) $\log|F|$ has an $\infty$-harmonic majorant in $h^1(\mathbb{D}_1^\infty)$. That is, there is a function $H\in h^1(\mathbb{D}_1^\infty)$ such that $\log|F|\leq H$ on $\mathbb{D}_{1}^\infty$.
\end{prop}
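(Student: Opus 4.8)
The plan is to prove the two implications separately; I expect $(2)\Rightarrow(1)$ to be routine and $(1)\Rightarrow(2)$ to carry the real content. For $(2)\Rightarrow(1)$ I would start from a majorant $H\in h^1(\mathbb{D}_1^\infty)$ with $\log|F|\le H$ on $\mathbb{D}_1^\infty$. Evaluating this at the point $(rw_1,\ldots,r^nw_n,\ldots)$ for $w\in\mathbb{T}^\infty$ and passing to positive parts gives $\log^+|F_{[r]}|\le|H_{[r]}|$ pointwise on $\mathbb{T}^\infty$. Integrating and taking the supremum over $r$ then yields $\sup_{0<r<1}\int_{\mathbb{T}^\infty}\log^+|F_{[r]}|\,dm_\infty\le\|H\|_h<\infty$, which is exactly $F\in N(\mathbb{D}_1^\infty)$.

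For $(1)\Rightarrow(2)$, the zero function handles the case $F=0$, so I would assume $F\ne0$. My strategy is to manufacture the majorant as a weak$^*$ limit of the harmonic majorants supplied by the Jensen inequality at each radius. For $0<r<1$ the function $F_{[r]}$ lies in $A(\mathbb{D}^\infty)$, so $\log|F_{[r]}|\in L^1(\mathbb{T}^\infty)$, and (\ref{equB21011}) gives, for every $\zeta\in\mathbb{D}_1^\infty$,
$$\log|F_{[r]}(\zeta)|\le G_r(\zeta):=\int_{\mathbb{T}^\infty}\mathbf{P}_\zeta\log|F_{[r]}|\,dm_\infty,$$
where $F_{[r]}(\zeta)=F(r\zeta_1,r^2\zeta_2,\ldots)$ is the natural extension of $F_{[r]}$ to $\overline{\mathbb{D}}^\infty$ and $G_r=\mathrm{P}[d\mu_r]\in h^1(\mathbb{D}_1^\infty)$ with $d\mu_r=\log|F_{[r]}|\,dm_\infty$. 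Since $\mu_r$ is absolutely continuous with density $\log|F_{[r]}|$, one has $\|\mu_r\|_M=\big\|\log|F_{[r]}|\big\|_1$, and hypothesis (1) together with Proposition \ref{thmB2101}(3) bounds $\limsup_{r\to1}\|\mu_r\|_M$. As $\mathbb{T}^\infty$ is compact metrizable, $C(\mathbb{T}^\infty)$ is separable, so Banach--Alaoglu furnishes a sequence $r_n\to1$ along which $\mu_{r_n}\to\mu$ weak$^*$ for some $\mu\in M(\mathbb{T}^\infty)$; I would then set $H=\mathrm{P}[d\mu]\in h^1(\mathbb{D}_1^\infty)$ using the Poisson integral established in the preceding proposition.

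The final task is to check $\log|F|\le H$ pointwise. Fixing $\zeta\in\mathbb{D}_1^\infty$, continuity of $\mathbf{P}_\zeta$ on $\mathbb{T}^\infty$ turns the weak$^*$ convergence into $G_{r_n}(\zeta)\to H(\zeta)$. On the other hand $(r_n^k\zeta_k)_k\to(\zeta_k)_k$ in $\ell^1$ (dominated convergence against the summable $|\zeta_k|$), so continuity of $F$ gives $F_{[r_n]}(\zeta)\to F(\zeta)$; letting $n\to\infty$ in the Jensen inequality then yields $\log|F(\zeta)|\le H(\zeta)$, trivially so when $F(\zeta)=0$. This establishes (2).

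The step I expect to be the main obstacle is the construction in $(1)\Rightarrow(2)$: first, upgrading the finiteness of the $\log^+$-integral in (1) to a uniform $M(\mathbb{T}^\infty)$-bound on the signed measures $\mu_r$, which is precisely what the third characterization in Proposition \ref{thmB2101} provides; and second, simultaneously pushing two limits through at a fixed interior point $\zeta$ — the weak$^*$ limit giving $G_{r_n}(\zeta)\to H(\zeta)$ and the continuity of $F$ giving $F_{[r_n]}(\zeta)\to F(\zeta)$ — so that the pointwise Jensen inequalities persist in the limit and assemble into a genuine majorant valid at every point of $\mathbb{D}_1^\infty$.
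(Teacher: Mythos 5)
Your proposal is correct and follows essentially the same route as the paper: the $(2)\Rightarrow(1)$ direction via $\log^+|F_{[r]}|\leq H_{[r]}^{+}\leq|H_{[r]}|$, and the $(1)\Rightarrow(2)$ direction by forming the measures $\log|F_{[r]}|\,dm_\infty$, using Proposition \ref{thmB2101}(3) with Banach--Alaoglu to extract a weak$^{*}$ limit $\mu$ along $r_n\to1$, and passing the Jensen inequality (\ref{equB21011}) for $F_{[r_n]}$ to the limit against the continuous kernel $\mathbf{P}_\zeta$ to obtain the majorant $\mathrm{P}[d\mu]\in h^1(\mathbb{D}_1^\infty)$. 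The only differences are that you supply details the paper leaves implicit (the $F=0$ case, separability of $C(\mathbb{T}^\infty)$ for sequential compactness, and the $\ell^1$-continuity argument giving $F_{[r_n]}(\zeta)\to F(\zeta)$), which are welcome but do not change the argument.
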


\begin{proof}
$(1)\Rightarrow(2)$: For every $0<r<1$, set $d\nu_r=\log|F_{[r]}|dm_\infty$.
Then it follows from (3) of Proposition \ref{thmB2101} and Banach-Alaoglu theorem that there is a sequence $r_n\rightarrow1\;(n\rightarrow\infty)$ and a complex Borel measure $\nu$ on $\mathbb{T}^\infty$, such that $\{\nu_{r_n}\}_{n=1}^\infty$ converges to $\nu$ in the weak$^{*}$-topology, in the dual space of $C(\mathbb{T}^\infty)$.
Since for every $n\in\mathbb{N}$, $F_{[r_n]}\in A(\mathbb{D}^\infty)$, we see from (\ref{equB21011}) that
$$\log|F_{[r_n]}(\zeta)|\leq\int_{\mathbb{T}^\infty}\mathbf{P}_\zeta \log|F_{[r_n]}|dm_\infty,\quad\zeta\in\mathbb{D}_1^\infty.$$
Letting $n\rightarrow\infty$ yields that
$$\log|F(\zeta)|\leq\int_{\mathbb{T}^\infty}\mathbf{P}_\zeta d\nu,\quad\zeta\in\mathbb{D}_1^\infty,$$
and the right side of this inequality gives an $\infty$-harmonic majorant of $\log|F|$ in $h^1(\mathbb{D}_1^\infty)$.

$(2)\Rightarrow(1)$: By assumptions, we have
$$\sup\limits_{0<r<1}\int_{\mathbb{T}^\infty}\log^+|F_{[r]}|dm_\infty\leq
\sup\limits_{0<r<1}\int_{\mathbb{T}^\infty}H^{+}_{[r]}dm_\infty\leq
\sup\limits_{0<r<1}\int_{\mathbb{T}^\infty}|H_{[r]}|dm_\infty<\infty,$$\
which gives $F\in N(\mathbb{D}_{1}^\infty)$.
\end{proof}

We next consider the metric in the class $N(\mathbb{D}_{1}^\infty)$.
Writing $\varphi(x)=\log(1+\mathrm{e}^x)$, then it is a non-decreasing convex function on $\mathbb{R}$. Therefore, for every $F\in N(\mathbb{D}_{1}^\infty)$, $\varphi(\log|F|)=\log(1+|F|)$ is $\infty$-subharmonic.
Hence by Proposition \ref{thm2101}, integrals
$$\int_{\mathbb{T}^\infty}\log(1+|F_{[r]}|)dm_\infty\quad(0<r<1)$$
increase with $r$.
As done in \cite{Dav, SS, Yan} for finite-variable cases, we define
$$\|F\|_0=\sup_{0<r<1}\int_{\mathbb{T}^\infty}\log(1+|F_{[r]}|)dm_\infty
=\lim_{r\rightarrow1}\int_{\mathbb{T}^\infty}\log(1+|F_{[r]}|)dm_\infty,\quad F\in N(\mathbb{D}_{1}^\infty),$$
and
$$d_{0}(F,G)=\|F-G\|_0,\quad F,G\in N(\mathbb{D}_{1}^\infty),$$
then $d_{0}$ is a translation-invariant metric on the space $N(\mathbb{D}_{1}^\infty)$. In fact, we have
\begin{prop}\label{thm2105}
$N(\mathbb{D}_{1}^\infty)$ is complete with respect to the metric $d_0$.
\end{prop}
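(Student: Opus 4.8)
The plan is to start from a $d_0$-Cauchy sequence $\{F_k\}$ (after passing to a subsequence with $\sum_k d_0(F_{k+1},F_k)<\infty$), to produce a candidate limit on the ``geometric'' points $(zw_1,z^2w_2,\dots)$ via the one-variable theory, to verify $d_0$-convergence by a Fatou argument, and finally to show that this candidate is the restriction of a genuinely holomorphic function on $\mathbb{D}_1^\infty$. The first move is a slicewise reduction. Since $(F_k-F_j)_w(r\lambda)=(F_k-F_j)_{[r]}(\lambda\star w)$, Lemma~\ref{thm2102B} applied to $\log(1+|(F_k-F_j)_{[r]}|)$ gives, for every $r$, the identity $\int_{\mathbb{T}^\infty}\int_{\mathbb{T}}\log(1+|(F_k-F_j)_w(r\lambda)|)\,dm_1(\lambda)\,dm_\infty(w)=\int_{\mathbb{T}^\infty}\log(1+|(F_k-F_j)_{[r]}|)\,dm_\infty\le d_0(F_k,F_j)$. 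The inner integral increases in $r$ by the one-variable form of Proposition~\ref{thm2101}, so monotone convergence lets me pull the supremum over $r$ inside the outer integral and obtain
\[
\int_{\mathbb{T}^\infty} d_0^{\mathbb{D}}\big((F_k)_w,(F_j)_w\big)\,dm_\infty(w)\le d_0(F_k,F_j),
\]
where $d_0^{\mathbb{D}}$ denotes the one-variable Nevanlinna metric. Hence for almost every $w$ the slices $\{(F_k)_w\}_k$ form a Cauchy sequence in $N(\mathbb{D})$; by the classical one-variable theory, where point evaluations are $d_0^{\mathbb{D}}$-continuous through the Poisson sub-mean value inequality and $N(\mathbb{D})$ is complete, they converge, both in $d_0^{\mathbb{D}}$ and locally uniformly on $\mathbb{D}$, to some $\Phi_w\in N(\mathbb{D})$.

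This already defines the limit on geometric points: for $\zeta=(zw_1,z^2w_2,\dots)$ set $F(\zeta)=\Phi_w(z)=\lim_k F_k(\zeta)$, valid for almost every $w$ and all $z\in\mathbb{D}$. In particular, for each fixed $r$ and almost every $w$, $(F_k)_{[r]}(w)=(F_k)_w(r)\to\Phi_w(r)=F_{[r]}(w)$. The $d_0$-convergence then follows from Fatou's lemma exactly as in the proof of Theorem~\ref{thm2102}: for fixed $k$ and $r$,
\[
\int_{\mathbb{T}^\infty}\log\big(1+|F_{[r]}-(F_k)_{[r]}|\big)\,dm_\infty\le\liminf_{j\to\infty}\int_{\mathbb{T}^\infty}\log\big(1+|(F_j-F_k)_{[r]}|\big)\,dm_\infty\le\liminf_{j\to\infty}d_0(F_j,F_k),
\]
and taking the supremum over $r$ (the right-hand side is independent of $r$) yields $d_0(F,F_k)\le\liminf_j d_0(F_j,F_k)$, which tends to $0$ as $k\to\infty$ by the Cauchy property. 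By the triangle inequality for $\|\cdot\|_0$ I also get $\|F\|_0\le d_0(F,F_{k_0})+\|F_{k_0}\|_0<\infty$.

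The main obstacle is the remaining point: to conclude $F\in N(\mathbb{D}_1^\infty)$ I must show that these values are the restriction to the geometric points of a function holomorphic on all of $\mathbb{D}_1^\infty$. Here the difficulty is genuinely infinite-dimensional, because $d_0$ records only the geometric dilations $F_{[r]}$ and, unlike the finite-variable case, control of boundary integral means does not transfer to interior sup-norms: a $d_0$-bounded family need not be locally uniformly bounded, and the Poisson representation over a single geometric torus has an unbounded kernel at generic interior points of $\mathbb{D}_1^\infty$. My plan is to route around this through Bohr's sections. Separate subharmonicity in the tail variables, used as in Proposition~\ref{thm2101}, gives $\|A_N G\|_0\le\|G\|_0$ for every $G$ and $N$; consequently each $\{A_NF_k\}_k$ is Cauchy, and since $A_NF_k$ depends on only finitely many variables the finite-variable theory (where point evaluations are continuous via a finite Poisson product, with no summability obstruction) produces holomorphic limits $\Phi^{(N)}$ on $\mathbb{D}^N$, locally uniformly, satisfying the consistency $A_M\Phi^{(N)}=\Phi^{(M)}$ for $M\le N$.

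To assemble the $\Phi^{(N)}$ into $F$ I would first establish a uniform tail estimate at the level of the metric, namely $\sup_k\|F_k-A_NF_k\|_0\to0$ as $N\to\infty$. This combines $\|A_N(\cdot)\|_0\le\|\cdot\|_0$ with the precompactness of a Cauchy sequence: choosing $K$ with $\|F_k-F_K\|_0$ small for $k\ge K$, then $N$ large enough to handle the finitely many $F_1,\dots,F_K$ (using $\|G-A_NG\|_0\to0$ for a single $G$, which follows from Proposition~\ref{thm2101} together with the uniform continuity of $G$ on the compact sets $\prod_n r^n\overline{\mathbb{D}}$), and bounding the cross term by $\|A_N(F_K-F_k)\|_0\le\|F_K-F_k\|_0$. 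This makes $\{\Phi^{(N)}\}_N$ a $d_0$-Cauchy family whose values on geometric points already coincide with $F$; the finite-variable locally uniform convergence of $A_NF_k$ then pins the $\Phi^{(N)}$ down and lets me verify that $\zeta\mapsto\lim_N\Phi^{(N)}(\zeta_1,\dots,\zeta_N)$ is locally bounded and separately holomorphic, hence holomorphic on $\mathbb{D}_1^\infty$ and equal to $F$. The hard part is precisely this final assembly---upgrading the metric-level tail control and the sectionwise finite-variable convergence into genuine holomorphy of the limit---whereas the slice reduction and the Fatou estimate make the existence of the $d_0$-limit and the bound $\|F\|_0<\infty$ comparatively routine.
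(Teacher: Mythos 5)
Your proposal does not close, and the reason is instructive: the obstacle you organize the whole detour around is not actually present on $\mathbb{D}_1^\infty$. You assert that a $d_0$-bounded family need not be locally uniformly bounded because ``the Poisson representation over a single geometric torus has an unbounded kernel at generic interior points of $\mathbb{D}_1^\infty$.'' That is the situation on $\mathbb{D}_2^\infty$, but on $\mathbb{D}_1^\infty$ the kernel $\mathbf{P}_\zeta$ is bounded (indeed continuous) on all of $\mathbb{T}^\infty$, since $\|\mathbf{P}_\zeta\|_\infty\le\prod_{n=1}^\infty\frac{1+|\zeta_n|}{1-|\zeta_n|}<\infty$ precisely because $\zeta\in\ell^1$, and these norms are uniformly bounded over each set $V_{r,M}$ of (\ref{equ2103A}) --- this summability is exactly why the Nevanlinna class is placed over $\mathbb{D}_1^\infty$ rather than $\mathbb{D}_2^\infty$. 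The paper's Lemma \ref{thm2105B} converts this into the evaluation estimate $\log(1+|F(\zeta)|)\le\|\mathbf{P}_\zeta\|_\infty\|F\|_0$ for \emph{every} $\zeta\in\mathbb{D}_1^\infty$, by applying the sub-mean value inequality (Corollary \ref{thm2101B}) to $\log\bigl(1+|F_{[r]}\circ\Phi_\zeta|\bigr)$ with $\Phi_\zeta$ the M\"obius automorphism of $\mathbb{D}_1^\infty$; the change of variables transfers control from the geometric dilations $F_{[r]}$ to arbitrary interior points, not merely to the geometric points $(zw_1,z^2w_2,\ldots)$. With this in hand the paper's proof is short: a $d_0$-Cauchy sequence converges uniformly on each $V_{r,M}$, the limit $F$ is automatically holomorphic, and fixing $r$ and letting $l\to\infty$ in $\int_{\mathbb{T}^\infty}\log(1+|(F_k-F_l)_{[r]}|)\,dm_\infty<\varepsilon$ gives both $F\in N(\mathbb{D}_1^\infty)$ and $\|F_k-F\|_0\le\varepsilon$. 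Your slice reduction and Fatou estimate are correct as far as they go, but they only produce a candidate limit on a thin family of points, and you explicitly defer the actual content of the proposition --- holomorphy of the limit on $\mathbb{D}_1^\infty$ --- to the ``final assembly.''

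That assembly, as sketched, has a genuine gap. Its load-bearing claim is $\|G-A_NG\|_0\to0$ for a fixed $G\in N(\mathbb{D}_1^\infty)$. The justification you offer --- uniform continuity of $G$ on the compact sets $r\overline{\mathbb{D}}\times r^2\overline{\mathbb{D}}\times\cdots$ --- yields only $\int_{\mathbb{T}^\infty}\log(1+|(G-A_NG)_{[r]}|)\,dm_\infty\to0$ for each \emph{fixed} $r$; the metric is the supremum over $r$, and interchanging $N\to\infty$ with $\sup_{0<r<1}$ is exactly the kind of uniformity-up-to-the-boundary assertion that separates $N_*(\mathbb{D}_1^\infty)$ from $N(\mathbb{D}_1^\infty)$ (compare Proposition \ref{thm2103}, where the analogous statement $\lim_{r\to1}\|F_{[r]}-F^*\|_0=0$ \emph{characterizes} membership in $N_*$), so for general $G$ it would need a proof and none is given; nothing in Proposition \ref{thm2101} supplies it. Moreover, even granting the tail estimate, your last step --- verifying that $\zeta\mapsto\lim_N\Phi^{(N)}(\zeta_1,\ldots,\zeta_N)$ is locally bounded on $\mathbb{D}_1^\infty$ --- requires a bound uniform in infinitely many variables, which is precisely the evaluation estimate you declared unavailable; the finite-variable Poisson products you invoke for each $\Phi^{(N)}$ have norms that must be controlled uniformly in $N$, and that control is again Lemma \ref{thm2105B}. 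In short: the approach could only be repaired by proving the unsubstantiated tail claim, whereas invoking Lemma \ref{thm2105B} dissolves the perceived obstruction and reduces the proposition to the paper's three-line argument.
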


To prove Proposition \ref{thm2105}, we need Lemma \ref{thm2105B}, which implies that the evaluation functional at $\zeta\in\mathbb{D}_1^\infty$ on $N(\mathbb{D}_{1}^\infty)$ is continuous, and will be used frequently in the sequel.

Recall that for any $\zeta\in\mathbb{D}^\infty$, the M$\ddot{\mathrm{o}}$bius map over the infinite-dimensional polydisk is defined as
$$\Phi_{\zeta}(w)=\left(\frac{\zeta_{1}-w_{1}}{1-\overline{\zeta}_{1}w_{1}},
\frac{\zeta_{2}-w_{2}}{1-\overline{\zeta}_{2}w_{2}},\ldots\right),\quad w\in\overline{\mathbb{D}}^\infty.$$
Then $\Phi_{\zeta}$ maps $\mathbb{D}^\infty$ and $\mathbb{T}^\infty$ onto themselves, respectively. Moreover, a direct verification shows that $\Phi_{\zeta}$ maps $\mathbb{D}_{1}^\infty$ onto itself if and only if $\zeta\in\mathbb{D}_1^\infty$. When $\zeta\in\mathbb{D}_1^\infty$, an  observation is that for every nonnegative continuous function $f$ on $\mathbb{T}^\infty$,
\begin{equation}\label{equ2103B}
\int_{\mathbb{T}^\infty}f\circ\Phi_{\zeta}dm_\infty=\int_{\mathbb{T}^\infty}f\mathbf{P}_\zeta dm_\infty\leq\|\mathbf{P}_\zeta\|_\infty\int_{\mathbb{T}^\infty}fdm_\infty.
\end{equation}

\begin{lem}\label{thm2105B}
If $\zeta\in\mathbb{D}_{1}^\infty$, then for every $F\in N(\mathbb{D}_{1}^\infty)$,
$$\log(1+|F(\zeta)|)\leq\|\mathbf{P}_\zeta\|_\infty\|F\|_0.$$
\end{lem}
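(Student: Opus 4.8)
The plan is to transport the sub–mean value inequality at the origin furnished by Corollary \ref{thm2101B} to an arbitrary point $\zeta\in\mathbb{D}_{1}^\infty$, using the M\"obius map $\Phi_\zeta$ together with the change-of-variables identity (\ref{equ2103B}). The mechanism is that $\Phi_\zeta(0)=\zeta$ while $\Phi_\zeta$ pushes $m_\infty$ forward to $\mathbf{P}_\zeta\,dm_\infty$, so precomposing an $\infty$-subharmonic function with $\Phi_\zeta$ turns its mean value at $0$ into a weighted average against the Poisson kernel at $\zeta$. To stay inside the continuous category where Corollary \ref{thm2101B} applies, I would work first with the dilates $F_{[s]}$ and only let $s\to1$ at the very end, thereby avoiding any appeal to boundary values.

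First I would fix $0<s<1$. If $F\ne0$ then $F_{[s]}\in A(\mathbb{D}^\infty)\subseteq C(\overline{\mathbb{D}}^\infty)$, so $u=\log(1+|F_{[s]}|)$ is nonnegative, continuous on $\overline{\mathbb{D}}^\infty$, and (being $\varphi\circ\log|F_{[s]}|$ for the non-decreasing convex $\varphi(x)=\log(1+\mathrm{e}^{x})$) is $\infty$-subharmonic. I would then pass to $u\circ\Phi_\zeta$: since $\zeta\in\mathbb{D}_{1}^\infty$, the map $\Phi_\zeta$ carries $\mathbb{D}_{1}^\infty$ onto itself, is continuous on $\overline{\mathbb{D}}^\infty$, and acts in each variable as a disk automorphism, so $u\circ\Phi_\zeta$ is again continuous on $\overline{\mathbb{D}}^\infty$ and $\infty$-subharmonic on $\mathbb{D}_{1}^\infty$, separate subharmonicity being preserved under the holomorphic substitution.

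Applying Corollary \ref{thm2101B} to $u\circ\Phi_\zeta$ and retaining only the resulting inequality $(u\circ\Phi_\zeta)(0)\leq\int_{\mathbb{T}^\infty}u\circ\Phi_\zeta\,dm_\infty$, then invoking (\ref{equ2103B}) for the nonnegative continuous function $u$, gives
\begin{equation*}
\log(1+|F_{[s]}(\zeta)|)=(u\circ\Phi_\zeta)(0)\leq\int_{\mathbb{T}^\infty}u\circ\Phi_\zeta\,dm_\infty=\int_{\mathbb{T}^\infty}u\,\mathbf{P}_\zeta\,dm_\infty\leq\|\mathbf{P}_\zeta\|_\infty\int_{\mathbb{T}^\infty}\log(1+|F_{[s]}|)\,dm_\infty\leq\|\mathbf{P}_\zeta\|_\infty\|F\|_0,
\end{equation*}
where the first equality uses $\Phi_\zeta(0)=\zeta$ and the last step is the definition of $\|F\|_0$ as a supremum over the dilation parameter. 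Finally I would let $s\to1^{-}$: because $(s\zeta_1,s^2\zeta_2,\ldots)\to\zeta$ in $\ell^1$ and $F$ is continuous, $F_{[s]}(\zeta)=F(s\zeta_1,s^2\zeta_2,\ldots)\to F(\zeta)$, so the bound survives in the limit and yields $\log(1+|F(\zeta)|)\leq\|\mathbf{P}_\zeta\|_\infty\|F\|_0$; the case $F=0$ is trivial.

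The step requiring the most care is the claim that $u\circ\Phi_\zeta$ is $\infty$-subharmonic on $\mathbb{D}_{1}^\infty$ and continuous up to $\overline{\mathbb{D}}^\infty$, that is, that precomposition with the coordinatewise disk automorphism $\Phi_\zeta$ destroys neither separate subharmonicity nor boundary continuity. Once this is granted, everything else is bookkeeping: the identity (\ref{equ2103B}) is precisely what identifies $\int u\circ\Phi_\zeta\,dm_\infty$ with the Poisson average $\int u\,\mathbf{P}_\zeta\,dm_\infty$, and it is this identification that promotes the mean value inequality at $0$ to the Poisson-weighted one at $\zeta$ and produces the constant $\|\mathbf{P}_\zeta\|_\infty$.
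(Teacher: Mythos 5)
Your proposal is correct and follows essentially the same route as the paper's proof: apply Corollary \ref{thm2101B} to the $\infty$-subharmonic function $\log(1+|F_{[s]}\circ\Phi_{\zeta}|)\in C(\overline{\mathbb{D}}^\infty)$, use $\Phi_\zeta(0)=\zeta$ together with the identity (\ref{equ2103B}) to obtain the Poisson-weighted bound, and let $s\rightarrow1^{-}$. Your added justifications (preservation of separate subharmonicity under the coordinatewise M\"obius substitution, and the $\ell^1$-continuity argument for $F_{[s]}(\zeta)\rightarrow F(\zeta)$) merely make explicit steps the paper leaves tacit.
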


\begin{proof}
For each $0<r<1$, $\log(1+|F_{[r]}\circ\Phi_{\zeta}|)\in C(\overline{\mathbb{D}}^\infty)$ is $\infty$-subharmonic on $\mathbb{D}_{1}^\infty$. By Corollary \ref{thm2101B},
\begin{equation}\label{equ2103C}
\log(1+|F_{[r]}(\zeta)|)=\log(1+|(F_{[r]}\circ\Phi_{\zeta})(0)|)\leq\int_{\mathbb{T}^\infty}
\log(1+|F_{[r]}\circ\Phi_{\zeta}|)dm_\infty.
\end{equation}
Applying the inequality (\ref{equ2103B}) to $\log(1+|F_{[r]}|)$, we have
$$\int_{\mathbb{T}^\infty}\log(1+|F_{[r]}\circ\Phi_{\zeta}|)dm_\infty
\leq\|\mathbf{P}_\zeta\|_\infty\int_{\mathbb{T}^\infty}\log(1+|F_{[r]}|)dm_\infty.$$
Combining this with (\ref{equ2103C}) yields that
$$\log(1+|F_{[r]}(\zeta)|)\leq\|\mathbf{P}_\zeta\|_\infty\int_{\mathbb{T}^\infty}\log(1+|F_{[r]}|)dm_\infty.$$
Letting $r\rightarrow1$ gives the desired conclusion.
\end{proof}

For every $0<r<1$ and $M>0$, set
\begin{equation}\label{equ2103A}
V_{r,M}=\left\{\zeta\in\ell^1: \|\zeta\|_1<M\;\text{and for each}\;n\in\mathbb{N},\;|\zeta_n|<r\right\},
\end{equation}
then $V_{r,M}$ is a domain in $\ell^1$, and $\{\mathbf{P}_\zeta\}_{\zeta\in V_{r,M}}$ is a bounded set in $L^\infty(\mathbb{T}^\infty)$.
We now present the proof of Proposition \ref{thm2105}.

\vskip2mm

\noindent\textbf{Proof of Proposition \ref{thm2105}.} Suppose that $\{F_{n}\}_{n=1}^\infty$ is a Cauchy sequence in $N(\mathbb{D}_{1}^\infty)$. By Lemma \ref{thm2105B}, for each $\zeta\in\mathbb{D}_{1}^\infty$ and $k,l\in\mathbb{N}$,
$$\log(1+|(F_k-F_l)(\zeta)|)\leq\|\mathbf{P}_\zeta\|_\infty\|F_k-F_l\|_0.$$
Note that for each $0<r<1$ and $M>0$, $\{\mathbf{P}_\zeta\}_{\zeta\in V_{r,M}}$ is bounded in $L^{\infty}(\mathbb{T}^\infty)$. It follows that $\{F_n\}_{n=1}^\infty$ converges uniformly to a holomorphic function on each $V_{r,M}$, and hence there exists a holomorphic function $F$ on $\mathbb{D}_{1}^\infty$, such that $\{F_n\}_{n=1}^\infty$ converges to $F$ uniformly on each $V_{r,M}$. It remains to prove $F\in N(\mathbb{D}_{1}^\infty)$ and $\|F_k-F\|_0\rightarrow0$ as $k\rightarrow\infty$. Given $\varepsilon>0$, choose $N$ large enough such that for $k,l>N$, $\|F_k-F_l\|_0<\varepsilon$. Then for each $0<r<1$,
$$\int_{\mathbb{T}^\infty}\log(1+|(F_k-F_l)_{[r]}|)dm_\infty<\varepsilon.$$
Note that $\{F_l\}_{l=1}^\infty$ converges to $F$ uniformly on $r\overline{\mathbb{D}}\times\cdots\times r^n\overline{\mathbb{D}}\times\cdots$. Letting $l\rightarrow\infty$ yields that when $k>N$,
\begin{equation}\label{equ2103E}
\int_{\mathbb{T}^\infty}\log(1+|(F_k-F)_{[r]}|)dm_\infty\leq\varepsilon,
\end{equation}
and hence for all $0<r<1$,
$$\int_{\mathbb{T}^\infty}\log(1+|F_{[r]}|)dm_\infty\leq\varepsilon+\int_{\mathbb{T}^\infty}\log(1+|(F_k)_{[r]}|)dm_\infty
\leq\varepsilon+\|F_k\|_0<\infty.$$
This shows $F\in N(\mathbb{D}_{1}^\infty)$.
Furthermore, by (\ref{equ2103E}), we have $\|F_k-F\|_0\leq\varepsilon$, which means that $N(\mathbb{D}_{1}^\infty)$ is complete with respect to $d_0$. $\hfill\square$

\vskip2mm

An important subclass of $N(\mathbb{D}_{1}^\infty)$ is the Smirnov class $N_{*}(\mathbb{D}_{1}^\infty)$, which consists of all functions $F\in N(\mathbb{D}_{1}^\infty)$ for which $\{\log^{+}|F_{[r]}|\}_{0<r<1}$ forms a uniformly integrable family.
When $F\in N(\mathbb{D}_{1}^\infty)$, (\ref{equ2102B}) implies that $F\in N_{*}(\mathbb{D}_{1}^\infty)$ if and only if $\{\log(1+|F_{[r]}|)\}_{0<r<1}$ is uniformly integrable over $\mathbb{T}^\infty$.

By Fatou's lemma, for every function $F\in N(\mathbb{D}_{1}^\infty)$, using the metric in $L^0(\mathbb{T}^\infty)$,
$$\|F^{*}\|_0\leq\lim\limits_{r\rightarrow1}\|F_{[r]}\|_0.$$
This inequality inspires the following characterization of functions in $N_{*}(\mathbb{D}_{1}^\infty)$.
For finite-variable cases, see \cite{Dav, SS, Yan}.

\begin{prop}\label{thm2103}
Let $F$ be a function in $N(\mathbb{D}_{1}^\infty)$. Then the following statements are equivalent:

(1) $F\in N_{*}(\mathbb{D}_{1}^\infty)$.

(2) $\|F^{*}\|_0=\lim_{r\rightarrow1}\|F_{[r]}\|_0$.

(3) $\lim_{r\rightarrow1}\|F_{[r]}-F^{*}\|_0=0$.

(4) $\lim_{r\rightarrow1}\int_{\mathbb{T}^\infty}\log^{+}|F_{[r]}|dm_\infty=\int_{\mathbb{T}^\infty}\log^{+}|F^{*}|dm_\infty$.
\end{prop}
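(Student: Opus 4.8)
The plan is to prove the cycle $(1)\Rightarrow(3)\Rightarrow(2)\Rightarrow(1)$ together with the separate equivalence $(1)\Leftrightarrow(4)$, driven by two classical real-variable facts on the finite measure space $(\mathbb{T}^\infty,m_\infty)$: Vitali's convergence theorem (uniform integrability plus a.e.\ convergence yields $L^1$-convergence) and Scheff\'e's lemma (for nonnegative $g_r\to g$ a.e.\ with $\int g_r\to\int g<\infty$ one has $\int|g_r-g|\to0$, hence $\{g_r\}$ is uniformly integrable). The inputs I would fix at the outset are: by Theorem \ref{thm2102}, $F_{[r]}\to F^*$ a.e.\ and $\log|F^*|\in L^1(\mathbb{T}^\infty)$, so both $\log^+|F^*|$ and $\log(1+|F^*|)$ lie in $L^1$, and by continuity $\log(1+|F_{[r]}|)\to\log(1+|F^*|)$ and $\log^+|F_{[r]}|\to\log^+|F^*|$ a.e.; by Proposition \ref{thm2101} the integrals $\int_{\mathbb{T}^\infty}\log(1+|F_{[r]}|)dm_\infty$ and $\int_{\mathbb{T}^\infty}\log^+|F_{[r]}|dm_\infty$ increase with $r$, so their limits as $r\to1$ exist; and $\|\cdot\|_0$ is subadditive, since $\log(1+|f+g|)\le\log(1+|f|)+\log(1+|g|)$. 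By (\ref{equ2102B}), condition (1) is equivalent to uniform integrability of either $\{\log(1+|F_{[r]}|)\}_{0<r<1}$ or $\{\log^+|F_{[r]}|\}_{0<r<1}$, and I use the two interchangeably. Note also that (2) is precisely the statement that equality holds in the Fatou inequality $\|F^*\|_0\le\lim_{r\to1}\|F_{[r]}\|_0$ recorded just before the proposition.

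For $(1)\Rightarrow(3)$: from $\log(1+|F_{[r]}-F^*|)\le\log(1+|F_{[r]}|)+\log(1+|F^*|)$ the family $\{\log(1+|F_{[r]}-F^*|)\}$ is uniformly integrable (dominated by a uniformly integrable family plus a fixed $L^1$ function) and tends to $0$ a.e., so Vitali gives $\|F_{[r]}-F^*\|_0\to0$. Then $(3)\Rightarrow(2)$ is immediate from subadditivity, $\big|\|F_{[r]}\|_0-\|F^*\|_0\big|\le\|F_{[r]}-F^*\|_0\to0$. For $(2)\Rightarrow(1)$ I apply Scheff\'e to $g_r=\log(1+|F_{[r]}|)$: condition (2) says $\int g_r\to\int\log(1+|F^*|)<\infty$, and with $g_r\to\log(1+|F^*|)$ a.e.\ Scheff\'e yields $L^1$-convergence and hence uniform integrability of $\{g_r\}$, i.e.\ (1). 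The equivalence $(1)\Leftrightarrow(4)$ is the same pair of arguments with $\log^+$ in place of $\log(1+\cdot)$: $(1)\Rightarrow(4)$ by Vitali and $(4)\Rightarrow(1)$ by Scheff\'e, both legitimate because $\int\log^+|F^*|<\infty$.

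The main technical point to get right is the continuous parameter $r\to1$ in the definition of uniform integrability, since Vitali and Scheff\'e are usually stated for sequences. Because a real-valued function of $r$ attains a prescribed limit as $r\to1$ exactly when it does so along every sequence $r_n\uparrow1$, I would run the sequential versions and then pass back to the net; the monotonicity from Proposition \ref{thm2101} makes this harmless. A second point, needed precisely when I must conclude uniform integrability of the full family over $(0,1)$ rather than just near $1$, is that for $r$ bounded away from $1$ the point $(rw_1,r^2w_2,\ldots)$ ranges over a compact subset of $\mathbb{D}_1^\infty$, so $\sup_{0<r\le r_0}\|F_{[r]}\|_\infty<\infty$ and the associated functions are uniformly bounded, hence trivially uniformly integrable; combined with the $L^1$-convergence near $1$ this delivers uniform integrability over all of $(0,1)$. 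I expect this bookkeeping between the net and sequences to be the only genuine obstacle, the analytic content being entirely carried by Vitali's theorem and Scheff\'e's lemma.
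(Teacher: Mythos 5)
Your proposal is correct, and it reaches the statement by a route that coincides with the paper's in most legs but genuinely differs in one. The parts $(1)\Rightarrow(3)\Rightarrow(2)$ and $(1)\Rightarrow(4)$ are the paper's chain $(1)\Rightarrow(2)\Rightarrow(3)\Rightarrow(4)$ reshuffled: your appeals to Vitali are the paper's Lebesgue--Vitali citation, and your Scheff\'e's lemma is precisely the paper's Lemma \ref{thm2104} (a consequence of the general dominated convergence theorem, Lemma \ref{thm2103B}) specialized to nonnegative functions with $p=1$; your subadditivity step for $(3)\Rightarrow(2)$ plays the role that inequality (\ref{equ2103G}) plays in the paper's $(3)\Rightarrow(4)$. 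The substantive divergence is the closing implication back to (1), i.e.\ upgrading uniform integrability from sequences $r_n\rightarrow1$ to the full family $\{0<r<1\}$. The paper, after obtaining $L^1$-convergence of $\log^{+}|F_{[r_n]}|$ along one increasing sequence, invokes the de la Vall\'ee-Poussin-type criterion \cite[Theorem 4.5.9]{Bog} to produce a strongly convex $\varphi$ with $\{\varphi(\log^{+}|F_{[r_n]}|)\}_{n}$ bounded in $L^{1}(\mathbb{T}^\infty)$, and then exploits the $\infty$-subharmonicity of $\varphi(\log^{+}|F|)$ together with the monotonicity of Proposition \ref{thm2101} to transfer the bound to all $0<r<1$ in one line. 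You instead upgrade by hand: continuity of $F$ on the compact set $r_0\overline{\mathbb{D}}\times r_0^{2}\overline{\mathbb{D}}\times\cdots\subset\mathbb{D}_{1}^\infty$ gives $\sup_{0<r\leq r_0}\|F_{[r]}\|_\infty<\infty$, disposing of parameters bounded away from $1$, while any witness of failure of uniform integrability whose parameters cluster at $1$ lies in an $L^1$-convergent, hence uniformly integrable, subfamily --- and such a failure is always witnessed by a sequence, so the two cases exhaust everything. Both mechanisms are sound; the paper's is shorter and recycles a device it needs again (the same convex-function-plus-Proposition-\ref{thm2101} argument reappears in the proof of Proposition \ref{thm2110}), whereas yours is more elementary, needing only the $\varepsilon$--$\delta$ definition of uniform integrability and compactness rather than the strongly-convex-function characterization. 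Two small points to make explicit in a write-up: the finiteness $\log^{+}|F^{*}|\in L^{1}(\mathbb{T}^\infty)$ that Scheff\'e requires comes from Theorem \ref{thm2102} and presupposes $F\neq0$, so the trivial case $F\equiv0$ should be dispatched separately; and in the compactness step one should note that the points $(rw_1,r^2w_2,\ldots)$ with $0<r\leq r_0$ indeed lie in $\mathbb{D}_{1}^\infty$ itself (each coordinate has modulus at most $r_0^{n}<1$ and the $\ell^1$-norm is at most $r_0/(1-r_0)$), so continuity of $F$ applies.
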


It is worth mentioning that when $F$ only depends on single variable, the equality (2) in Proposition \ref{thm2103} is another common definition for the Smirnov class $N_{*}(\mathbb{D})$.

To prove Proposition \ref{thm2103}, the the general Lebesgue's dominated convergence theorem \cite[Theorem 2.8.8]{Bog} is needed.

\begin{lem}[General Lebesgue's dominated convergence theorem]\label{thm2103B}
Let $(X,\mathcal{M},\mu)$ be a measure space and $\{f_n\}_{n=1}^\infty$, $\{g_n\}_{n=1}^\infty$ two sequences of measurable functions on $X$ that converge to $f$, $g$ almost everywhere on $X$, respectively.
Assume that for every $n\in\mathbb{N}$, $g_n\geq0$, and $|f_n|\leq g_n$ on $X$.
If $g\in L^1(X,\mu)$ and
$$\lim_{n\rightarrow\infty}\int_{X}g_n d\mu=\int_{X}g d\mu,$$
then
$$\lim_{n\rightarrow\infty}\int_{X}f_n d\mu=\int_{X}f d\mu.$$
Furthermore, this conclusion remain valid if ``convergence almost everywhere" is replaced by ``convergence in measure".
\end{lem}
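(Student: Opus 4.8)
The plan is to reduce everything to Fatou's lemma applied to two auxiliary nonnegative sequences, exactly as in the classical proof of the dominated convergence theorem. First I would record the integrability that legitimizes the argument: since $g_n\ge 0$ and $\int_X g_n\,d\mu\to\int_X g\,d\mu<\infty$, the integrals $\int_X g_n\,d\mu$ are finite for all large $n$; passing to the a.e.\ limit in $|f_n|\le g_n$ gives $|f|\le g$ a.e., so $f\in L^1(X,\mu)$ and each $f_n$ with $\int_X g_n\,d\mu<\infty$ lies in $L^1(X,\mu)$ as well. This ensures that every integral below is finite and that subtracting the finite quantity $\int_X g\,d\mu$ is permissible.

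The core step is to apply Fatou's lemma twice. Since $|f_n|\le g_n$, both $g_n+f_n\ge 0$ and $g_n-f_n\ge 0$. Applying Fatou to $\{g_n+f_n\}_{n=1}^\infty$ and using $g_n+f_n\to g+f$ a.e.\ together with $\lim_{n\to\infty}\int_X g_n\,d\mu=\int_X g\,d\mu$ gives
$$\int_X g\,d\mu+\int_X f\,d\mu\le\int_X g\,d\mu+\liminf_{n\to\infty}\int_X f_n\,d\mu,$$
whence, cancelling $\int_X g\,d\mu$, I obtain $\int_X f\,d\mu\le\liminf_{n\to\infty}\int_X f_n\,d\mu$. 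Applying Fatou to $\{g_n-f_n\}_{n=1}^\infty$ in the same way, and writing $\liminf_{n\to\infty}(-\int_X f_n\,d\mu)=-\limsup_{n\to\infty}\int_X f_n\,d\mu$, yields $\limsup_{n\to\infty}\int_X f_n\,d\mu\le\int_X f\,d\mu$. Chaining these two inequalities forces $\liminf$ and $\limsup$ to coincide with $\int_X f\,d\mu$, which is precisely $\lim_{n\to\infty}\int_X f_n\,d\mu=\int_X f\,d\mu$.

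For the final assertion I would pass from almost-everywhere convergence to convergence in measure by the standard subsequence criterion: a sequence of real numbers $a_n$ converges to $a$ if and only if every subsequence admits a further subsequence converging to $a$. Setting $a_n=\int_X f_n\,d\mu$ and $a=\int_X f\,d\mu$, I take an arbitrary subsequence; since $f_n\to f$ and $g_n\to g$ in measure, I extract from it a further subsequence along which both $f_{n_k}\to f$ and $g_{n_k}\to g$ almost everywhere (first thinning for $f$, then thinning again for $g$). Along this subsequence $\int_X g_{n_k}\,d\mu\to\int_X g\,d\mu$ still holds, since the full sequence of integrals converges, so the almost-everywhere case just proved gives $\int_X f_{n_k}\,d\mu\to\int_X f\,d\mu$. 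The subsequence criterion then delivers full-sequence convergence.

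I do not expect a serious obstacle, as this is a classical result. The only point requiring care is the legitimacy of cancelling $\int_X g\,d\mu$, which rests on its finiteness (the hypothesis $g\in L^1(X,\mu)$); without it the two Fatou inequalities would be vacuous. In the in-measure case, the one subtlety is the double thinning of the subsequence so that $f_{n_k}$ and $g_{n_k}$ converge almost everywhere simultaneously, which is exactly what permits the a.e.\ version to be invoked verbatim.
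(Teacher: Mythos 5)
Your proof is correct, and in fact the paper gives no proof of this lemma at all: it is quoted with a citation to \cite[Theorem 2.8.8]{Bog}, and your argument --- Fatou's lemma applied to the nonnegative sequences $g_n+f_n$ and $g_n-f_n$, followed by the subsequence criterion with a double thinning to reduce the in-measure case to the almost-everywhere case --- is exactly the standard proof underlying that citation. You also correctly flag the two delicate points, namely that the cancellation of $\int_X g\,d\mu$ requires $g\in L^1(X,\mu)$, and that the subsequence must be thinned so that $f_{n_k}$ and $g_{n_k}$ converge almost everywhere simultaneously.
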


The following lemma immediately follows.

\begin{lem}\label{thm2104}
Let $0\leq p<\infty$ and $\{h_{n}\}_{n=1}^\infty$ be a sequence in $L^{p}(\mathbb{T}^{\infty})$ that converges to $h\in L^{p}(\mathbb{T}^{\infty})$ almost everywhere on $\mathbb{T}^{\infty}$. Then $\|h_{n}-h\|_p\rightarrow0$ if and only if $\|h_{n}\|_p\rightarrow\|h\|_p$ as $n\rightarrow\infty$.
\end{lem}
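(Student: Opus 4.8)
The plan is to reduce both implications to Lemma \ref{thm2103B}, the general Lebesgue dominated convergence theorem, by exhibiting in each of the two regimes $0<p<\infty$ and $p=0$ a suitable dominating sequence. I would first dispose of the direction $\|h_n-h\|_p\to0\Rightarrow\|h_n\|_p\to\|h\|_p$, which is purely a matter of subadditivity and requires no limit theorem. When $1\leq p<\infty$ it is the ordinary reverse triangle inequality $|\,\|h_n\|_p-\|h\|_p\,|\leq\|h_n-h\|_p$. When $0<p<1$, the elementary inequality $(a+b)^p\leq a^p+b^p$ gives $\big|\int_{\mathbb{T}^\infty}|h_n|^pdm_\infty-\int_{\mathbb{T}^\infty}|h|^pdm_\infty\big|\leq\int_{\mathbb{T}^\infty}|h_n-h|^pdm_\infty$, so that $\int|h_n|^p\to\int|h|^p$ and hence $\|h_n\|_p\to\|h\|_p$. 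When $p=0$, the inequality $\log(1+a+b)\leq\log(1+a)+\log(1+b)$ for $a,b\geq0$ shows that $\|\cdot\|_0$ is subadditive, whence again $|\,\|h_n\|_0-\|h\|_0\,|\leq\|h_n-h\|_0\to0$.

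For the more substantial converse $\|h_n\|_p\to\|h\|_p\Rightarrow\|h_n-h\|_p\to0$, I would apply Lemma \ref{thm2103B} with carefully chosen $\{f_n\}$ and $\{g_n\}$. In the case $0<p<\infty$, set $f_n=|h_n-h|^p$ and $g_n=2^p(|h_n|^p+|h|^p)$; the inequality $(a+b)^p\leq2^p(a^p+b^p)$ yields $|f_n|\leq g_n$, while $h_n\to h$ a.e. forces $f_n\to0$ and $g_n\to g:=2^{p+1}|h|^p$ a.e. Since $h\in L^p(\mathbb{T}^\infty)$ we have $g\in L^1(\mathbb{T}^\infty)$, and the hypothesis $\|h_n\|_p\to\|h\|_p$ gives $\int_{\mathbb{T}^\infty}g_n\,dm_\infty=2^p(\|h_n\|_p^p+\|h\|_p^p)\to\int_{\mathbb{T}^\infty}g\,dm_\infty$. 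Lemma \ref{thm2103B} then produces $\int_{\mathbb{T}^\infty}|h_n-h|^p\,dm_\infty\to0$, i.e. $\|h_n-h\|_p\to0$. In the case $p=0$ the identical scheme works with $f_n=\log(1+|h_n-h|)$ and $g_n=\log(1+|h_n|)+\log(1+|h|)$: the domination $f_n\leq g_n$ is again $\log(1+|h_n-h|)\leq\log(1+|h_n|)+\log(1+|h|)$, the a.e. limit is $g=2\log(1+|h|)\in L^1(\mathbb{T}^\infty)$, and $\int_{\mathbb{T}^\infty}g_n\,dm_\infty=\|h_n\|_0+\|h\|_0\to2\|h\|_0=\int_{\mathbb{T}^\infty}g\,dm_\infty$.

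The only genuine point requiring care — and thus the step I would verify most explicitly — is the convergence $\int g_n\,dm_\infty\to\int g\,dm_\infty$ demanded by Lemma \ref{thm2103B}, since this is precisely where the assumption $\|h_n\|_p\to\|h\|_p$ is consumed (together with the continuity of $x\mapsto x^p$ on $[0,\infty)$ needed to pass from $\|h_n\|_p$ to $\|h_n\|_p^p$). Everything else is pointwise algebra, so once the two dominating sequences are in place both implications of Lemma \ref{thm2104} follow at once; I expect no essential difficulty beyond bookkeeping the two regimes in parallel.
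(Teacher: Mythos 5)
Your proposal is correct and takes essentially the same route as the paper: the paper's proof applies Lemma \ref{thm2103B} with exactly your dominating pairs, namely $f_n=|h_n-h|^p$, $g_n=2^p\left(|h_n|^p+|h|^p\right)$ for $p>0$ and $f_n=\log(1+|h_n-h|)$, $g_n=\log(1+|h_n|)+\log(1+|h|)$ for $p=0$. The only difference is cosmetic: the paper dismisses the forward implication with ``it suffices to show'' the converse, whereas you spell out the subadditivity estimates in the three regimes.
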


\begin{proof}
We prove this lemma by using the general Lebesgue's dominated convergence theorem.
It suffices to show that $\|h_{n}\|_p\rightarrow\|h\|_p$ implies $\|h_{n}-h\|_p\rightarrow0$.
Write
$$f_n=\begin{cases}\log(1+|h_n-h|), & p=0,\\
|h_n-h|^p, & p>0,\end{cases}
\quad\quad g_n=\begin{cases} \log(1+|h_n|)+\log(1+|h|), & p=0,\\
2^p(|h_n|^p+|h|^p), & p>0.\end{cases}$$
For every $n\in\mathbb{N}$, it is clear that $0\leq f_n\leq g_n$ on $\mathbb{T}^{\infty}$.
Applying Lemma \ref{thm2103B} to sequences $\{f_n\}_{n=1}^\infty$ and $\{g_n\}_{n=1}^\infty$ shows that $\|h_{n}-h\|_p\rightarrow0$
as $n\rightarrow\infty$.
\end{proof}

It is worth mentioning that when $0<p<\infty$, \cite[pp.73, Exercise 17]{Ru4} presents two proofs for Lemma \ref{thm2104} using Egoroff's theorem and Fatou's lemma, respectively.

Recall that a subset $\Lambda$ of $L^{1}(\mathbb{T}^\infty)$ is uniformly integrable if and only if there is a non-decreasing convex function $\varphi: \mathbb{R}\rightarrow[0,\infty)$ satisfying $\frac{\varphi(t)}{t}\rightarrow\infty$ as $t\rightarrow+\infty$, called strongly convex function, such that $\{\varphi\circ|f|\}_{f\in\Lambda}$ is bounded in $L^{1}(\mathbb{T}^\infty)$, see \cite[Theorem 4.5.9]{Bog}.

We now present the proof of Proposition \ref{thm2103}.

\vskip2mm

\noindent\textbf{Proof of Proposition \ref{thm2103}.} $(1)\Rightarrow(2)$: When $F\in N_{*}(\mathbb{D}_{1}^\infty)$, the family of functions $\{\log(1+|F_{[r]}|)\}_{0<r<1}$ is uniformly integrable over $\mathbb{T}^{\infty}$. Applying Lebesgue-Vitali's theorem \cite[Theorem 4.5.4]{Bog}, we conclude that $\|F_{[r]}\|_0\rightarrow\|F^{*}\|_0$ as $r\rightarrow1$.

$(2)\Rightarrow(3)$: It immediately follows from Lemma \ref{thm2104}.

$(3)\Rightarrow(4)$: By the inequality
\begin{equation}\label{equ2103G}
|\log^{+}x-\log^{+}y|\leq\log(1+|x-y|),\quad x,y\geq0,
\end{equation}
we have
$$\left|\int_{\mathbb{T}^\infty}\log^{+}|F_{[r]}|dm_\infty-\int_{\mathbb{T}^\infty}\log^{+}|F^{*}|dm_\infty\right|\leq
\|F_{[r]}-F^{*}\|_0\rightarrow0\quad(r\rightarrow1),$$
which implies (4).

$(4)\Rightarrow(1)$: Let $\{r_n\}_{n=1}^\infty$ be an increasing sequence in $(0,1)$ with $r_n\rightarrow1$ as $n\rightarrow\infty$.
By (4) and Lemma \ref{thm2104}, the sequence $\{\log^{+}|F_{[r_n]}|\}_{n=1}^\infty$ converges to $\log^{+}|F^{*}|$ in $L^1(\mathbb{T}^\infty)$.
Then it follows from Lebesgue-Vitali's theorem that $\{\log^{+}|F_{[r_n]}|\}_{n=1}^\infty$ is uniformly integrable over $\mathbb{T}^{\infty}$.
Therefore, there is a strongly convex function $\varphi$ such that $\{\varphi(\log^{+}|F_{[r_n]}|)\}_{n=1}^\infty$ is bounded in $L^{1}(\mathbb{T}^{\infty})$.
By the $\infty-$subharmonicity of $\varphi(\log^{+}|F|)$, applying Proposition \ref{thm2101} shows that $\{\varphi(\log^{+}|F_{[r]}|)\}_{0<r<1}$ is also bounded in $L^{1}(\mathbb{T}^{\infty})$. Therefore, the family $\{\log^{+}|F_{[r]}|\}_{0<r<1}$ is uniformly integrable, which implies $F\in N_{*}(\mathbb{D}_{1}^\infty)$. $\hfill\square$

\vskip2mm

The following Proposition immediately follows from Proposition \ref{thm2103}.

\begin{prop}
$N_{*}(\mathbb{D}_{1}^\infty)$ is a closed subclass of $N(\mathbb{D}_{1}^\infty)$.
\end{prop}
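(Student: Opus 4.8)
The plan is to show that any $d_0$-limit of functions in $N_{*}(\mathbb{D}_1^\infty)$ again lies in $N_{*}(\mathbb{D}_1^\infty)$, by verifying condition (4) of Proposition \ref{thm2103} for the limit. So I would take a sequence $\{F_k\}_{k=1}^\infty\subset N_{*}(\mathbb{D}_1^\infty)$ with $\|F_k-F\|_0\to0$ for some $F\in N(\mathbb{D}_1^\infty)$, and aim to prove $\lim_{r\to1}\int_{\mathbb{T}^\infty}\log^{+}|F_{[r]}|\,dm_\infty=\int_{\mathbb{T}^\infty}\log^{+}|F^{*}|\,dm_\infty$. As a preliminary, I would record that $N(\mathbb{D}_1^\infty)$ is a vector space (from $\log^{+}|F+G|\le\log2+\log^{+}|F|+\log^{+}|G|$), so each difference $F_k-F$ belongs to $N(\mathbb{D}_1^\infty)$; by Theorem \ref{thm2102} the radial limits $F_k^{*}$, $F^{*}$ and $(F_k-F)^{*}$ then exist almost everywhere, and $(F_k-F)^{*}=F_k^{*}-F^{*}$ a.e.\ (the zero function having zero radial limit, so there is no exceptional case).

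The first main step is to transfer the metric convergence to the boundary. Applying the Fatou-type inequality $\|G^{*}\|_0\le\lim_{r\to1}\|G_{[r]}\|_0$, stated just before Proposition \ref{thm2103}, to $G=F_k-F$ gives $\|F_k^{*}-F^{*}\|_0\le\|F_k-F\|_0\to0$. Combining this with inequality (\ref{equ2103G}) in the form $\big|\log^{+}|F_k^{*}|-\log^{+}|F^{*}|\big|\le\log(1+|F_k^{*}-F^{*}|)$ and integrating over $\mathbb{T}^\infty$ yields $\int_{\mathbb{T}^\infty}\log^{+}|F_k^{*}|\,dm_\infty\to\int_{\mathbb{T}^\infty}\log^{+}|F^{*}|\,dm_\infty$ as $k\to\infty$.

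The second main step couples $F$ to $F_k$ at each radius. Using (\ref{equ2103G}) again pointwise on $\mathbb{T}^\infty$ together with $\big||a|-|b|\big|\le|a-b|$, one has $\log^{+}|F_{[r]}|\le\log^{+}|(F_k)_{[r]}|+\log(1+|(F_k-F)_{[r]}|)$, so integrating and bounding the last integral by $\|F_k-F\|_0$ gives $\int_{\mathbb{T}^\infty}\log^{+}|F_{[r]}|\,dm_\infty\le\int_{\mathbb{T}^\infty}\log^{+}|(F_k)_{[r]}|\,dm_\infty+\|F_k-F\|_0$. The left-hand integral increases with $r$ by Proposition \ref{thm2101}, so its limit as $r\to1$ exists; letting $r\to1$ and invoking Proposition \ref{thm2103}(4) for $F_k\in N_{*}(\mathbb{D}_1^\infty)$ produces $\lim_{r\to1}\int_{\mathbb{T}^\infty}\log^{+}|F_{[r]}|\,dm_\infty\le\int_{\mathbb{T}^\infty}\log^{+}|F_k^{*}|\,dm_\infty+\|F_k-F\|_0$. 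Letting $k\to\infty$ and using the first step gives the inequality $\lim_{r\to1}\int_{\mathbb{T}^\infty}\log^{+}|F_{[r]}|\,dm_\infty\le\int_{\mathbb{T}^\infty}\log^{+}|F^{*}|\,dm_\infty$; the reverse inequality is Fatou's lemma applied to $\log^{+}|F_{[r]}|\to\log^{+}|F^{*}|$ a.e. This establishes condition (4) of Proposition \ref{thm2103}, whence $F\in N_{*}(\mathbb{D}_1^\infty)$.

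I expect the only delicate points to be the double interchange of limits in $r$ and $k$, which is handled cleanly because the error term $\|F_k-F\|_0$ is uniform in $r$, and the almost-everywhere identity $(F_k-F)^{*}=F_k^{*}-F^{*}$, which is exactly what lets the metric estimate on the difference descend to the boundary integrals. Everything else is bookkeeping built on the elementary inequality (\ref{equ2103G}) and the monotonicity supplied by Proposition \ref{thm2101}. Since $d_0$ is a metric, sequential closedness is enough to conclude that $N_{*}(\mathbb{D}_1^\infty)$ is closed in $N(\mathbb{D}_1^\infty)$.
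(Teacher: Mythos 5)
Your proof is correct, but it routes through criterion (4) of Proposition \ref{thm2103}, whereas the paper verifies criterion (3) directly. The paper's argument is a one-sided, three-term estimate in the $L^0$ metric: for $G_n\in N_{*}(\mathbb{D}_1^\infty)$ with $\|G_n-G\|_0\to0$ it writes
$$\|G_{[r]}-G^{*}\|_0\leq\|G_{[r]}-(G_{n})_{[r]}\|_0+\|(G_{n})_{[r]}-G_{n}^{*}\|_0+\|G_{n}^{*}-G^{*}\|_0
\leq2\|G_n-G\|_0+\|(G_{n})_{[r]}-G_{n}^{*}\|_0,$$
using exactly the two transfer facts you also rely on --- the radius-uniform contraction $\|(G_n-G)_{[r]}\|_0\leq\|G_n-G\|_0$ (monotonicity from Proposition \ref{thm2101}) and the boundary Fatou inequality $\|(G_n-G)^{*}\|_0\leq\|G_n-G\|_0$ --- then lets $r\to1$, then $n\to\infty$, and concludes by the implication (3)$\Rightarrow$(1). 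Your version converts those same two estimates into convergence of the $\log^{+}$ means via inequality (\ref{equ2103G}), which costs extra bookkeeping that the paper's route avoids: the a.e.\ identification $(F_k-F)^{*}=F_k^{*}-F^{*}$ (resting on Theorem \ref{thm2102} for existence of the radial limits), a two-sided squeeze in which Fatou's lemma supplies the reverse inequality, and a final appeal to (4)$\Rightarrow$(1) of Proposition \ref{thm2103}; all of these steps are carried out correctly in your write-up, including the potentially delicate double limit, which you rightly resolve by noting that the error term $\|F_k-F\|_0$ is uniform in $r$. What your approach buys is an explicit statement that $\int_{\mathbb{T}^\infty}\log^{+}|F_{[r]}|\,dm_\infty\to\int_{\mathbb{T}^\infty}\log^{+}|F^{*}|\,dm_\infty$, which is of independent interest; the paper's metric formulation packages the same uniformity mechanism more economically, since condition (3) needs no boundary identification and no reverse inequality.
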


\begin{proof}
Assume that $\{G_n\}_{n=1}^\infty$ is a sequence in $N_{*}(\mathbb{D}_{1}^\infty)$, and $\|G_n-G\|_0\rightarrow0$ for some $G\in N(\mathbb{D}_{1}^\infty)$ as $n\rightarrow\infty$.
To prove $G\in N_{*}(\mathbb{D}_{1}^\infty)$, let $0<r<1$ and $n\in\mathbb{N}$, then
$$\begin{aligned}
\|G_{[r]}-G^{*}\|_0&\leq\|G_{[r]}-(G_{n})_{[r]}\|_0+\|(G_{n})_{[r]}-G_{n}^{*}\|_0+\|G_{n}^{*}-G^{*}\|_0\\
&\leq2\|G_n-G\|_0+\|(G_{n})_{[r]}-G_{n}^{*}\|_0.
\end{aligned}$$
Since $G_n\in N_{*}(\mathbb{D}_{1}^\infty)$, combining the above inequality with Proposition \ref{thm2103} shows that
$$\limsup_{r\rightarrow1}\|G_{[r]}-G^{*}\|_0\leq2\|G_n-G\|_0.$$
Letting $n\rightarrow\infty$, we see that $\|G_{[r]}-G^{*}\|_0\rightarrow0$ as $r\rightarrow1$. Again by Proposition \ref{thm2103}, $G\in N_{*}(\mathbb{D}_{1}^\infty)$.
\end{proof}

Applying the inequality
$$\log(1+xy)\leq\log(1+x)+\log(1+y),\quad x,y\geq0$$
yields that the Nevanlinna class $N(\mathbb{D}_{1}^\infty)$ is an algebra, and the Smirnov class $N_{*}(\mathbb{D}_{1}^\infty)$ is its subalgebra.
Unfortunately, as shown in \cite{Dav, SS} for one-variable case, the Nevanlinna class $N(\mathbb{D}_{1}^\infty)$ is not a topological linear space since the scalar multiplication is not continuous.
However, the following proposition shows that the Smirnov class $N_{*}(\mathbb{D}_{1}^\infty)$ is in fact a topological algebra.

\begin{prop}\label{thm2105Z}
If $\{F_n\}_{n=1}^\infty$ and $\{G_n\}_{n=1}^\infty$ are two sequences in $N_{*}(\mathbb{D}_{1}^\infty)$ that converge to $F,G$ in $N_{*}(\mathbb{D}_{1}^\infty)$, respectively, then $\{F_nG_n\}_{n=1}^\infty$ converges to $FG$ in $N_{*}(\mathbb{D}_{1}^\infty)$.
\end{prop}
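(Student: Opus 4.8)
The plan is to show that multiplication is jointly continuous on $N_{*}(\mathbb{D}_{1}^\infty)$ by reducing the claim to the convergence criterion of Proposition \ref{thm2103} together with the completeness-style uniform-convergence-on-$V_{r,M}$ machinery already developed for Proposition \ref{thm2105}. First I would record the basic submultiplicative estimate coming from $\log(1+xy)\leq\log(1+x)+\log(1+y)$, which already gives that $N_{*}(\mathbb{D}_{1}^\infty)$ is an algebra; the point is to promote this to sequential continuity. Since the metric $d_0$ is translation-invariant, the natural move is to write
$$F_nG_n-FG=(F_n-F)G_n+F(G_n-G),$$
so it suffices to control $\|(F_n-F)G_n\|_0$ and $\|F(G_n-G)\|_0$ separately and show each tends to $0$.

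The main difficulty is that $d_0$ is not a norm and multiplication by a fixed element need not be Lipschitz in $\|\cdot\|_0$, so I cannot simply pull out $\|G_n\|_0$ as a bounded factor. The strategy I expect to work is to pass to boundary-value functions and exploit that, for $F,G\in N_{*}(\mathbb{D}_{1}^\infty)$, the radial approach is controlled by Proposition \ref{thm2103}: convergence $\|H_n-H\|_0\to 0$ in $N_{*}(\mathbb{D}_{1}^\infty)$ transfers to $L^0(\mathbb{T}^\infty)$-convergence of the boundary functions $H_n^{*}\to H^{*}$, because $\|H_n^{*}-H^{*}\|_0\le\liminf_{r\to1}\|(H_n-H)_{[r]}\|_0\le\|H_n-H\|_0$ by Fatou's lemma. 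Thus $F_n^{*}\to F^{*}$ and $G_n^{*}\to G^{*}$ in $L^0(\mathbb{T}^\infty)$, hence (passing to a subsequence) almost everywhere on $\mathbb{T}^\infty$, and therefore $(F_nG_n)^{*}=F_n^{*}G_n^{*}\to F^{*}G^{*}=(FG)^{*}$ almost everywhere. The first half of the argument is then to show $\|F_n^{*}G_n^{*}-F^{*}G^{*}\|_0\to 0$ in $L^0(\mathbb{T}^\infty)$, which I would get from the general Lebesgue dominated convergence theorem (Lemma \ref{thm2103B}) applied with $f_n=\log(1+|F_n^{*}G_n^{*}-F^{*}G^{*}|)$ dominated by $g_n=\log(1+|F_n^{*}|)+\log(1+|G_n^{*}|)+\log(1+|F^{*}|)+\log(1+|G^{*}|)$, using that $\|F_n^{*}\|_0\to\|F^{*}\|_0$ and $\|G_n^{*}\|_0\to\|G^{*}\|_0$ so that $\int g_n\,dm_\infty\to\int g\,dm_\infty$ with $g=2\log(1+|F^{*}|)+2\log(1+|G^{*}|)$.

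The second half, the genuine obstacle, is to upgrade this boundary $L^0$-convergence back to convergence in the $N_{*}(\mathbb{D}_{1}^\infty)$-metric, i.e. to verify $\lim_{r\to1}\|(F_nG_n)_{[r]}-(F_nG_n)^{*}\|_0=0$ uniformly enough to conclude. Here I would invoke Proposition \ref{thm2103} in the form $(1)\Leftrightarrow(2)$: to show $F_nG_n\to FG$ it is enough to check $FG\in N_{*}(\mathbb{D}_{1}^\infty)$ and $\|F_nG_n-FG\|_0\to 0$, and since $d_0(H_n,H)=\|H_n-H\|_0$ with $\|\cdot\|_0=\sup_{0<r<1}\int_{\mathbb{T}^\infty}\log(1+|\,\cdot_{[r]}|)\,dm_\infty$, I need uniform (in $r$) smallness. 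The clean route is to prove directly that $\|F_nG_n-FG\|_0\to 0$ by establishing, via the submultiplicative inequality and the triangle inequality, the bound
$$\|F_nG_n-FG\|_0\le \|F_n-F\|_0+\|G_n\|_0\ \text{-type terms},$$
then tightening it: because the families $\{\log(1+|(F_n)_{[r]}|)\}$ and $\{\log(1+|(G_n)_{[r]}|)\}$ are uniformly integrable (as $F_n,G_n$ lie in the Smirnov class and converge), their products' logarithms inherit uniform integrability, and one applies Lebesgue--Vitali together with the already-proven a.e. boundary convergence to force the radial integrals to converge uniformly in $r$. I expect the delicate point to be justifying that uniform integrability of the two factor families yields uniform integrability of $\{\log(1+|(F_nG_n)_{[r]}|)\}_{0<r<1,\,n}$, for which I would use the strongly-convex-function characterization of uniform integrability recalled before Proposition \ref{thm2103}, choosing a common strongly convex $\varphi$ dominating both families and checking that $\infty$-subharmonicity plus Proposition \ref{thm2101} propagates the bound from a radial sequence $r_n\to1$ to all $0<r<1$.
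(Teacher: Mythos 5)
Your first half is essentially the paper's own proof. After transferring convergence to the boundary via Fatou's lemma ($\|F_n^*-F^*\|_0\leq\|F_n-F\|_0\rightarrow0$, likewise for $G$), passing to an almost-everywhere convergent subsequence, and applying the general dominated convergence theorem (Lemma \ref{thm2103B}), you obtain $\|F_n^*G_n^*-F^*G^*\|_0\rightarrow0$. Your dominating sequence $g_n=\log(1+|F_n^*|)+\log(1+|G_n^*|)+\log(1+|F^*|)+\log(1+|G^*|)$ differs from the paper's but is valid: $1+|F_n^*G_n^*-F^*G^*|\leq(1+|F_n^*||G_n^*|)(1+|F^*||G^*|)$ gives the domination, and $\int g_n dm_\infty\rightarrow\int g\,dm_\infty$ follows from the triangle inequality for $\|\cdot\|_0$. (Both you and the paper gloss the passage from subsequences back to the full sequence; this is repaired by the standard subsequence principle, or by using the convergence-in-measure form of Lemma \ref{thm2103B}.)

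The genuine gap is your second half. What you call ``the genuine obstacle''---upgrading boundary $L^0$-convergence to convergence in the metric of $N_{*}(\mathbb{D}_1^\infty)$---is in fact immediate, and the route you sketch does not close it. The missing idea: since $N_{*}(\mathbb{D}_1^\infty)$ is an algebra (the submultiplicative inequality you quote shows $\{\log(1+|(FG)_{[r]}|)\}_{0<r<1}$ is dominated by the sum of the two uniformly integrable factor families, so $FG$ and each $F_nG_n$ lie in $N_{*}(\mathbb{D}_1^\infty)$), the difference $H_n=F_nG_n-FG$ belongs to $N_{*}(\mathbb{D}_1^\infty)$, and Proposition \ref{thm2103}(2) (equivalently, the isometry of Proposition \ref{thm2111}) applied to $H_n$ gives directly $\|H_n\|_0=\|H_n^*\|_0=\|F_n^*G_n^*-F^*G^*\|_0$, because for functions in the Smirnov class the supremum over $r$ of the radial integrals equals the limit as $r\rightarrow1$, which equals the boundary integral. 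So the first half already finishes the proof, as in the paper. By contrast, your proposed mechanism is both unestablished and insufficient as stated: the assertion that the doubly-indexed family $\{\log(1+|(F_nG_n)_{[r]}|)\}_{n,r}$ ``inherits'' uniform integrability is not proved (no common strongly convex function is produced), and even granting it, convergence of each radial integral via Lebesgue--Vitali does not by itself control $\sup_{0<r<1}\int_{\mathbb{T}^\infty}\log(1+|(H_n)_{[r]}|)dm_\infty$, which is what $\|H_n\|_0\rightarrow0$ requires. No uniformity in $n$ or in $r$ is needed anywhere: the per-function isometry applied to the difference $H_n$ makes all of these maneuvers unnecessary.
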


\begin{proof}
From Proposition \ref{thm2103}, we have $\|F_{n}^{*}-F^{*}\|_0\rightarrow0$, and $\|G_{n}^{*}-G^{*}\|_0\rightarrow0$ as $n\rightarrow\infty$.
Passing to subsequences, assume that $\{F_{n}^{*}\}_{n=1}^\infty$ and $\{G_{n}^{*}\}_{n=1}^\infty$ converge almost everywhere to $F^{*}$ and $G^{*}$ on $\mathbb{T}^\infty$, respectively.
Write
$$f_n=\log(1+|F_{n}^{*}G_{n}^{*}-F^{*}G^{*}|)$$
and
$$g_n=\log(1+|F_{n}^{*}|)+\log(1+|G_{n}^{*}-G^{*}|)+\log(1+|F_{n}^{*}-F^{*}|)+\log(1+|G^{*}|).$$
It is easy to verify that for every $n\in\mathbb{N}$, $0\leq f_n\leq g_n$ on $\mathbb{T}^\infty$.
Applying Lemma \ref{thm2103B} to sequences $\{f_n\}_{n=1}^\infty$ and $\{g_n\}_{n=1}^\infty$ shows that $\|F_{n}^{*}G_{n}^{*}-F^{*}G^{*}\|_0\rightarrow0$ as $n\rightarrow\infty$.
Then by Proposition \ref{thm2103}, $\|F_{n}G_{n}-FG\|_0\rightarrow0$ as $n\rightarrow\infty$, which completes the proof.
\end{proof}

By Proposition \ref{thmB2103}, for every $F\in N(\mathbb{D}_{1}^\infty)$, $\log|F|$ has an $\infty$-harmonic majorant in $h^{1}(\mathbb{D}_{1}^\infty)$.
The following proposition shows that if, furthermore, $F\in N_{*}(\mathbb{D}_{1}^\infty)$, the $\infty$-harmonic majorant can be taken as $\mathrm{P}[\log|F^{*}|dm_\infty]$.
The finite-variable version of this proposition is presented in \cite{Ru2}.

\begin{prop}\label{thm2108}
Suppose that $F\in N(\mathbb{D}_{1}^\infty)$. Then $F\in N_{*}(\mathbb{D}_{1}^\infty)$ if and only if for every $\zeta\in \mathbb{D}_{1}^\infty$,
\begin{equation}\label{equ2108B}
\log|F(\zeta)|\leq\int_{\mathbb{T}^\infty}\mathbf{P}_\zeta\log|F^{*}|dm_\infty.
\end{equation}
\end{prop}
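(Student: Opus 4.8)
The plan is to prove the two implications separately, assuming throughout that $F\neq0$ (if $F=0$ both sides of (\ref{equ2108B}) are $-\infty$ and there is nothing to prove). For the forward implication ($F\in N_*(\mathbb{D}_1^\infty)\Rightarrow$(\ref{equ2108B})) I would start from the Jensen-measure inequality (\ref{equB21011}): since $F_{[r]}\in A(\mathbb{D}^\infty)$ for each $0<r<1$, we have $\log|F_{[r]}(\zeta)|\le\int_{\mathbb{T}^\infty}\mathbf{P}_\zeta\log|F_{[r]}|\,dm_\infty$ for every $\zeta\in\mathbb{D}_1^\infty$, and the aim is to pass to the limit $r\to1$. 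The left-hand side tends to $\log|F(\zeta)|$ because $r\star\zeta\to\zeta$ in $\ell^1$ and $F$ is continuous; the real work is the right-hand side, which I would handle by splitting $\log|F_{[r]}|=\log^+|F_{[r]}|-\log^-|F_{[r]}|$ (with $\log^-|g|:=\max\{-\log|g|,0\}$) and treating the two parts asymmetrically.

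For the positive part, membership in $N_*(\mathbb{D}_1^\infty)$ says precisely that $\{\log^+|F_{[r]}|\}_{0<r<1}$ is uniformly integrable; together with the a.e. convergence $\log^+|F_{[r]}|\to\log^+|F^*|$ supplied by Theorem \ref{thm2102} and Vitali's theorem, this gives $\log^+|F_{[r]}|\to\log^+|F^*|$ in $L^1(\mathbb{T}^\infty)$, whence $\int_{\mathbb{T}^\infty}\mathbf{P}_\zeta\log^+|F_{[r]}|\,dm_\infty\to\int_{\mathbb{T}^\infty}\mathbf{P}_\zeta\log^+|F^*|\,dm_\infty$ since $\mathbf{P}_\zeta$ is bounded for fixed $\zeta\in\mathbb{D}_1^\infty$. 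For the negative part, $\mathbf{P}_\zeta\log^-|F_{[r]}|\ge0$ converges a.e. to $\mathbf{P}_\zeta\log^-|F^*|$, so Fatou's lemma yields $\liminf_{r\to1}\int_{\mathbb{T}^\infty}\mathbf{P}_\zeta\log^-|F_{[r]}|\,dm_\infty\ge\int_{\mathbb{T}^\infty}\mathbf{P}_\zeta\log^-|F^*|\,dm_\infty$. Subtracting the two estimates gives $\limsup_{r\to1}\int_{\mathbb{T}^\infty}\mathbf{P}_\zeta\log|F_{[r]}|\,dm_\infty\le\int_{\mathbb{T}^\infty}\mathbf{P}_\zeta\log|F^*|\,dm_\infty$, and combining this with the limit of the left-hand side establishes (\ref{equ2108B}).

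For the reverse implication I would exploit the criterion (4) of Proposition \ref{thm2103}. Set $u=\mathrm{P}[\log|F^*|\,dm_\infty]$, which is $\infty$-harmonic since $\log|F^*|\in L^1(\mathbb{T}^\infty)$ by Theorem \ref{thm2102}; the hypothesis (\ref{equ2108B}) reads $\log|F(\zeta)|\le u(\zeta)$, and evaluating at $\zeta=r\star w$ (which lies in $\mathbb{D}_1^\infty$) gives the pointwise bound $\log|F_{[r]}|\le u_{[r]}$ on $\mathbb{T}^\infty$. Now decompose $u=u_1-u_2$ with $u_1=\mathrm{P}[\log^+|F^*|\,dm_\infty]\ge0$ and $u_2=\mathrm{P}[\log^-|F^*|\,dm_\infty]\ge0$. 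Because $u_1\ge0$ and $u_2\ge0$ we have $u^+\le u_1$, so $\log^+|F_{[r]}|\le(u_{[r]})^+\le(u_1)_{[r]}$ and hence $\int_{\mathbb{T}^\infty}\log^+|F_{[r]}|\,dm_\infty\le\int_{\mathbb{T}^\infty}(u_1)_{[r]}\,dm_\infty\le\|u_1\|_h=\int_{\mathbb{T}^\infty}\log^+|F^*|\,dm_\infty$, where the last equality uses the isometry $\mu\mapsto\mathrm{P}[d\mu]$ from $M(\mathbb{T}^\infty)$ onto $h^1(\mathbb{D}_1^\infty)$. Letting $r\to1$ gives $\lim_{r\to1}\int_{\mathbb{T}^\infty}\log^+|F_{[r]}|\,dm_\infty\le\int_{\mathbb{T}^\infty}\log^+|F^*|\,dm_\infty$; since Fatou's lemma supplies the opposite inequality, equality holds, which is exactly statement (4) of Proposition \ref{thm2103}, and therefore $F\in N_*(\mathbb{D}_1^\infty)$.

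The main obstacle, and the point where the two classes genuinely differ, is the passage to the limit. In the forward direction the asymmetric treatment is unavoidable: uniform integrability is available only for $\log^+|F_{[r]}|$ (it is the defining property of $N_*(\mathbb{D}_1^\infty)$), so the negative part must be controlled by the one-sided Fatou inequality rather than by genuine convergence. In the reverse direction the key simplification is to avoid establishing any boundary convergence of the Poisson integral $u_{[r]}\to\log|F^*|$; splitting the harmonic majorant into two nonnegative Poisson integrals reduces the whole matter to bounding $\int_{\mathbb{T}^\infty}(u_1)_{[r]}\,dm_\infty$ by $\|u_1\|_h$, after which Proposition \ref{thm2103}(4) does the rest.
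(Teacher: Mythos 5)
Your proposal is correct, and its two halves relate to the paper differently. The forward implication is essentially the paper's own argument: start from the Jensen-measure inequality (\ref{equB21011}) for $F_{[r]}\in A(\mathbb{D}^\infty)$, use uniform integrability of the positive parts with Lebesgue--Vitali to get $\int_{\mathbb{T}^\infty}\mathbf{P}_\zeta\log^{+}|F_{[r]}|\,dm_\infty\to\int_{\mathbb{T}^\infty}\mathbf{P}_\zeta\log^{+}|F^{*}|\,dm_\infty$, and Fatou for the negative parts (your variant of first obtaining $L^1$-convergence of $\log^{+}|F_{[r]}|$ and then multiplying by the bounded kernel $\mathbf{P}_\zeta$ is the same argument in substance). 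The converse is where you genuinely diverge. The paper picks a strongly convex $\varphi$ with $\varphi(\log|F^{*}|)\in L^{1}(\mathbb{T}^\infty)$ (de la Vall\'ee Poussin), applies Jensen's inequality with respect to the probability measures $\mathbf{P}_{r\star w}\,dm_\infty$ and Fubini's theorem via the kernel identity $\int_{\mathbb{T}^\infty}\mathbf{P}_{r\star w}(\xi)\,dm_\infty(w)=1$ to obtain the uniform bound $\int_{\mathbb{T}^\infty}\varphi(\log^{+}|F_{[r]}|)\,dm_\infty\leq\varphi(0)+\int_{\mathbb{T}^\infty}\varphi(\log|F^{*}|)\,dm_\infty$, and reads off uniform integrability of $\{\log^{+}|F_{[r]}|\}_{0<r<1}$ directly from the definition of $N_{*}(\mathbb{D}_{1}^\infty)$. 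You instead split the Poisson majorant as $u=u_1-u_2$, use only the contractive half of the correspondence $M(\mathbb{T}^\infty)\to h^{1}(\mathbb{D}_{1}^\infty)$ (which is itself the same Fubini computation with the same kernel identity) to get $\sup_{0<r<1}\int_{\mathbb{T}^\infty}\log^{+}|F_{[r]}|\,dm_\infty\leq\int_{\mathbb{T}^\infty}\log^{+}|F^{*}|\,dm_\infty$, pair this with Fatou's lemma to force equality, and invoke criterion (4) of Proposition \ref{thm2103}; all steps check out, including the evaluation at $\zeta=r\star w\in\mathbb{D}_{1}^\infty$, the a.e.\ convergence from Theorem \ref{thm2102}, the monotonicity in $r$ from Proposition \ref{thm2101} guaranteeing that the limit exists, and the absence of circularity since Proposition \ref{thm2103} is proved independently beforehand. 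Your route is shorter and avoids choosing $\varphi$ explicitly, but be aware that the strongly convex machinery is delegated rather than eliminated: it is exactly what drives $(4)\Rightarrow(1)$ in the paper's proof of Proposition \ref{thm2103}. Conversely, the paper's direct computation (\ref{equ2111A}) proves a quantitative bound valid for an arbitrary non-decreasing convex $\varphi$, which the paper reuses verbatim with $\varphi(t)=\exp(pt)$ in the proof of Corollary \ref{thm2113}; your criterion-(4) argument would not yield that byproduct.
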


\begin{proof}
We first assume that $F\in N_{*}(\mathbb{D}_{1}^\infty)$. Then for each $0<r<1$, $F_{[r]}\in A(\mathbb{D}^\infty)$, and (\ref{equB21011}) implies that for $\zeta\in\mathbb{D}_{1}^\infty$,
\begin{equation}\label{equ2109}
\log|F_{[r]}(\zeta)|\leq\int_{\mathbb{T}^\infty}\mathbf{P}_\zeta\log|F_{[r]}|dm_\infty.
\end{equation}
Since $F\in N_{*}(\mathbb{D}_{1}^\infty)$ and $\mathbf{P}_\zeta\in C(\mathbb{T}^\infty)$, the family $\{\mathbf{P}_\zeta\log^{+}|F_{[r]}|\}_{0<r<1}$ is uniformly integrable. By Lebesgue-Vitali's theorem \cite[Theorem 4.5.4]{Bog},
\begin{equation}\label{equ2110}
\lim_{r\rightarrow1}\int_{\mathbb{T}^\infty}\mathbf{P}_\zeta\log^{+}|F_{[r]}|dm_\infty=
\int_{\mathbb{T}^\infty}\mathbf{P}_\zeta\log^{+}|F^{*}|dm_\infty.
\end{equation}
On the other hand, it follows from Fatou's lemma that
\begin{equation}\label{equ2111}
\liminf\limits_{r\rightarrow1}\int_{\mathbb{T}^\infty}\mathbf{P}_\zeta\log^{-}|F_{[r]}|dm_\infty
\geq\int_{\mathbb{T}^\infty}\mathbf{P}_\zeta\log^{-}|F^{*}|dm_\infty,
\end{equation}
where $\log^{-}x=-\min\{0,\log x\}$ for $x>0$. By (\ref{equ2110}) and (\ref{equ2111}) we see that
$$\limsup\limits_{r\rightarrow1}\int_{\mathbb{T}^\infty}\mathbf{P}_\zeta\log|F_{[r]}|dm_\infty\leq
\int_{\mathbb{T}^\infty}\mathbf{P}_\zeta\log|F^{*}|dm_\infty.$$
Letting $r\rightarrow1$ in (\ref{equ2109}) shows that
$$\log|F(\zeta)|\leq\limsup\limits_{r\rightarrow1}\int_{\mathbb{T}^\infty}\mathbf{P}_\zeta\log|F_{[r]}|dm_\infty\leq
\int_{\mathbb{T}^\infty}\mathbf{P}_\zeta\log|F^{*}|dm_\infty.$$

Conversely, suppose that $F\neq0$ and (\ref{equ2108B}) holds for every $\zeta\in\mathbb{D}_{1}^\infty$, then for each $0<r<1$ and $w\in\mathbb{T}^\infty$,
$$\log|F_{[r]}(w)|\leq\int_{\mathbb{T}^\infty}\mathbf{P}_{r\star w}\log|F^{*}|dm_\infty,$$
where $r\star w\in\mathbb{D}_{1}^\infty$ by definition (\ref{equ2102C}). Since $\log|F^{*}|\in L^1(\mathbb{T}^\infty)$,  there is a strongly convex function $\varphi$ such that $\varphi(\log|F^{*}|)\in L^1(\mathbb{T}^\infty)$.
As done in finite-variable cases by Rudin \cite{Ru2}, applying the convexity of $\varphi$ and Fubini's theorem, we obtain that for $0<r<1$,
\begin{equation}\label{equ2111A}
\begin{aligned}
\int_{\mathbb{T}^\infty}\varphi(\log|F_{[r]}|)dm_\infty&\leq\int_{\mathbb{T}^\infty}\varphi\left(\int_{\mathbb{T}^\infty}\mathbf{P}_{r\star w}(\xi)\log|F^{*}(\xi)|dm_\infty(\xi)\right)dm_\infty(w)\\
&\leq\int_{\mathbb{T}^\infty}\int_{\mathbb{T}^\infty}\mathbf{P}_{r\star w}(\xi)\varphi(\log|F^{*}(\xi)|)dm_\infty(\xi)dm_\infty(w)\\
&=\int_{\mathbb{T}^\infty}\left(\int_{\mathbb{T}^\infty}\mathbf{P}_{r\star w}(\xi)dm_\infty(w)\right)\varphi(\log|F^{*}(\xi)|)dm_\infty(\xi)\\
&=\int_{\mathbb{T}^\infty}\varphi(\log|F^{*}|)dm_\infty.
\end{aligned}
\end{equation}
Let $E_r$ be the set of points $w\in\mathbb{T}^\infty$ such that $|F_{[r]}(w)|<1$, and $E_r^c$ the complement of $E_r$ with respect to $\mathbb{T}^\infty$. Then (\ref{equ2111A}) implies that
\begin{equation}\label{equ2111B}
\begin{aligned}
\int_{\mathbb{T}^\infty}\varphi(\log^{+}|F_{[r]}|)dm_\infty&=\int_{E_r}\varphi(0)dm_\infty+\int_{E_r^c}\varphi(\log|F_{[r]}|)dm_\infty\\
&\leq\varphi(0)+\int_{\mathbb{T}^\infty}\varphi(\log|F_{[r]}|)dm_\infty\\
&\leq\varphi(0)+\int_{\mathbb{T}^\infty}\varphi(\log|F^{*}|)dm_\infty.
\end{aligned}
\end{equation}
Therefore, the family $\{\log^{+}|F_{[r]}|\}_{0<r<1}$ is uniformly integrable over $\mathbb{T}^\infty$, which implies $F\in N_{*}(\mathbb{D}_{1}^\infty)$.
\end{proof}

Motivated by Proposition \ref{thm2108}, we ask when the inequality in this proposition attains an equality at some point, and such a nonzero function is called an outer function.

\begin{prop}\label{thm501}
If $F$ is a nonzero function in $N_{*}(\mathbb{D}_1^\infty)$, then the following statements are equivalent:

(1) $F$ is outer.

(2) For every $\zeta\in \mathbb{D}_1^\infty$, $\log|F(\zeta)|=\int_{\mathbb{T}^\infty}\mathbf{P}_\zeta\log|F^{*}|dm_\infty$.

(3) $\log|F(0)|=\int_{\mathbb{T}^\infty}\log|F^{*}|dm_\infty$.
\end{prop}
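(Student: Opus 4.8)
The plan is to establish the cycle $(2)\Rightarrow(3)\Rightarrow(1)\Rightarrow(2)$, where only the last implication carries real content. For $(2)\Rightarrow(3)$ I would simply evaluate at $\zeta=0$: since $|w_n|=1$ on $\mathbb{T}^\infty$, each one-variable kernel satisfies $\mathrm{P}_{0}(w_n)=1$, so $\mathbf{P}_0\equiv1$, and the identity in $(2)$ at $\zeta=0$ reads $\log|F(0)|=\int_{\mathbb{T}^\infty}\log|F^{*}|\,dm_\infty$, which is $(3)$. For $(3)\Rightarrow(1)$ I would note that $0\in\mathbb{D}_1^\infty$ and that, again because $\mathbf{P}_0\equiv1$, statement $(3)$ asserts exactly that the inequality of Proposition \ref{thm2108} becomes an equality at $\zeta=0$; by the definition of outer function preceding the proposition, this means $F$ is outer.

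The heart of the argument is $(1)\Rightarrow(2)$. Set $H(\zeta)=\int_{\mathbb{T}^\infty}\mathbf{P}_\zeta\log|F^{*}|\,dm_\infty$; since $\log|F^{*}|\in L^1(\mathbb{T}^\infty)$ by Theorem \ref{thm2102}, the measure $\log|F^{*}|\,dm_\infty$ lies in $M(\mathbb{T}^\infty)$, so $H\in h^1(\mathbb{D}_1^\infty)$ is $\infty$-harmonic and in particular continuous. By Proposition \ref{thm2108} the function $v=H-\log|F|$ satisfies $v\geq0$ on $\mathbb{D}_1^\infty$, and outerness supplies a point $\zeta^{0}\in\mathbb{D}_1^\infty$ with $v(\zeta^{0})=0$. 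The key structural observation is that $v$ is \emph{separately superharmonic}: in each variable it is the difference of a harmonic function ($H$) and a subharmonic function ($\log|F|$), hence superharmonic. The goal is then to upgrade the single zero at $\zeta^{0}$ to $v\equiv0$.

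I would propagate the zero one coordinate at a time using the classical minimum principle. Fixing all coordinates of $\zeta^{0}$ except the first, the slice $\zeta_1\mapsto v(\zeta_1,\zeta_2^{0},\zeta_3^{0},\dots)$ is superharmonic and nonnegative on $\mathbb{D}$ and vanishes at $\zeta_1^{0}$; attaining its minimum at an interior point, it is identically $0$. Feeding this back and varying the second coordinate, then the third, and so on, an induction yields $v(\zeta_1,\dots,\zeta_n,\zeta_{n+1}^{0},\zeta_{n+2}^{0},\dots)=0$ for all $n$ and all $\zeta_1,\dots,\zeta_n\in\mathbb{D}$; that is, $v$ vanishes on every finite modification of $\zeta^{0}$. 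Finally, given an arbitrary $\zeta\in\mathbb{D}_1^\infty$, the truncations $\zeta^{(n)}=(\zeta_1,\dots,\zeta_n,\zeta_{n+1}^{0},\zeta_{n+2}^{0},\dots)$ lie in $\mathbb{D}_1^\infty$ and converge to $\zeta$ in $\ell^1$, because $\sum_{k>n}|\zeta_k-\zeta_k^{0}|$ is the tail of a convergent series. Since $H$ is continuous and $\log|F|$ is upper semicontinuous, $v$ is lower semicontinuous, so $0\leq v(\zeta)\leq\liminf_n v(\zeta^{(n)})=0$, forcing $v\equiv0$, which is $(2)$.

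The main obstacle I anticipate is that $v$ is only \emph{separately} superharmonic, not jointly, so there is no ready-made infinite-dimensional minimum principle to invoke; the zero set must be enlarged coordinate by coordinate and then the passage to all of $\mathbb{D}_1^\infty$ carried out by a limiting argument. This is precisely where the $\ell^1$ geometry is essential: the finite modifications of $\zeta^{0}$ are $\ell^1$-dense in $\mathbb{D}_1^\infty$, and $v$ is lower semicontinuous for that topology, so the density-plus-semicontinuity step closes the argument. I would take care to check that each coordinate slice through a finite modification genuinely lies inside $\mathbb{D}_1^\infty$ (which holds since such a point is summable with all entries in $\mathbb{D}$), so that the one-variable minimum principle applies on the full disk, and that each relevant slice of $v$ is not identically $+\infty$ (guaranteed by the already-established zero on that slice).
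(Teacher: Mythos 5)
Your proposal is correct and takes essentially the same route as the paper: the paper likewise reduces everything to $(1)\Rightarrow(2)$ and applies its maximum principle for $\infty$-subharmonic functions (Lemma \ref{thmB2104}) to $\rho(\zeta)=\log|F(\zeta)|-\int_{\mathbb{T}^\infty}\mathbf{P}_\zeta\log|F^{*}|dm_\infty\leq0$, which is $-v$ in your notation. Your coordinate-by-coordinate minimum principle on slices followed by the $\ell^1$-truncation and semicontinuity step is exactly the paper's proof of that lemma, merely inlined with the superharmonic sign convention.
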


To prove Proposition \ref{thm501}, the maximum principle of $\infty$-subharmonic functions is needed.

\begin{lem}[The maximum principle]\label{thmB2104}
Let $u$ be an $\infty$-subharmonic function on $\mathbb{D}_{1}^\infty$, and $a\in\mathbb{R}$.
If $u\leq a$ on $\mathbb{D}_{1}^\infty$ and $u(\eta)=a$ for some $\eta\in\mathbb{D}_1^\infty$, then $u\equiv a$ on $\mathbb{D}_{1}^\infty$.
\end{lem}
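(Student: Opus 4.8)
The plan is to reduce everything to the classical one-variable strong maximum principle for subharmonic functions, applied one coordinate at a time, and then to pass to the full domain using only upper semicontinuity. First I would propagate the equality $u=a$ along coordinate slices, starting from the point $\eta$ at which $u(\eta)=a$. For the first coordinate, the slice $z\mapsto u(z,\eta_2,\eta_3,\ldots)$ is subharmonic on $\mathbb{D}$ (this is exactly the meaning of $\infty$-subharmonicity in the first variable, and each such point lies in $\mathbb{D}_1^\infty$ because $|z|<1$ and $\sum_{j\ge2}|\eta_j|<\infty$). Since this slice is $\le a$ on $\mathbb{D}$ and equals $a$ at the interior point $z=\eta_1$, the one-variable maximum principle forces it to be identically $a$. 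Repeating this in the second, third, $\ldots$ coordinates and arguing by induction, I would show that $u(\zeta_1,\ldots,\zeta_n,\eta_{n+1},\eta_{n+2},\ldots)=a$ for every $n\in\mathbb{N}$ and every $(\zeta_1,\ldots,\zeta_n)\in\mathbb{D}^n$; at each step the relevant slice is subharmonic on all of $\mathbb{D}$ and attains the value $a$ at an interior point. Thus $u\equiv a$ on the set $D$ of points of $\mathbb{D}_1^\infty$ that agree with $\eta$ in all but finitely many coordinates.

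The remaining step is to remove the finite-modification restriction. Given an arbitrary $\zeta\in\mathbb{D}_1^\infty$, set $\zeta^{(n)}=(\zeta_1,\ldots,\zeta_n,\eta_{n+1},\eta_{n+2},\ldots)\in D$. Then $\|\zeta^{(n)}-\zeta\|_1=\sum_{j>n}|\eta_j-\zeta_j|\to0$ because $\zeta,\eta\in\ell^1$, so $\zeta^{(n)}\to\zeta$ in the norm of $\ell^1$. Since $u(\zeta^{(n)})=a$ for all $n$, upper semicontinuity of $u$ gives $a=\limsup_{n\to\infty}u(\zeta^{(n)})\le u(\zeta)$, and combined with the hypothesis $u(\zeta)\le a$ this yields $u(\zeta)=a$. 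Hence $u\equiv a$ on $\mathbb{D}_1^\infty$.

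The one genuinely infinite-dimensional point, and the step I expect to require the most care, is this last passage to the limit: it works only because the approximating sequence converges in the $\ell^1$ norm (so that $D$ is dense and the semicontinuity estimate is available) and because upper semicontinuity happens to supply the inequality in the direction needed here, namely $u(\zeta)\ge\limsup_{n\to\infty}u(\zeta^{(n)})=a$. The coordinatewise propagation itself is routine once the one-variable maximum principle is granted; the only things to check there are that each slice genuinely ranges over the whole disk while staying inside $\mathbb{D}_1^\infty$, and that the value of the free variable at which $u=a$ is an interior point, both of which are immediate consequences of $\eta\in\mathbb{D}_1^\infty$.
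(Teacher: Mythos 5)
Your proposal is correct and follows essentially the same route as the paper: the paper likewise shows $u_n(z)=u(z_1,\ldots,z_n,\eta_{n+1},\eta_{n+2},\ldots)\equiv a$ on $\mathbb{D}^n$ by applying the one-variable maximum principle successively in each coordinate, and then concludes via upper semicontinuity that $u(\zeta)\geq\limsup_{n\rightarrow\infty}u(\zeta_1,\ldots,\zeta_n,\eta_{n+1},\eta_{n+2},\ldots)=a$ for every $\zeta\in\mathbb{D}_1^\infty$. Your added care about the $\ell^1$-norm convergence $\zeta^{(n)}\rightarrow\zeta$ (which the paper leaves implicit) and about the slices staying in $\mathbb{D}_1^\infty$ is accurate but does not change the argument.
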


\begin{proof}
For every $n\in\mathbb{N}$, set
$$u_n(z)=u(z_1,\ldots,z_n,\eta_{n+1},\eta_{n+2},\ldots),\quad z=(z_1,\ldots,z_n)\in\mathbb{D}^n.$$
Then $u_n\leq a$ is subharmonic in variables $z_1,\ldots,z_n$ separately, and $u_n(\eta_1\ldots,\eta_n)=a$.
By using the subharmonicity successively in each variable, we see that $u_n\equiv a$ on $\mathbb{D}^n$.
Now it follows from the upper semicontinuity of $u$ that for every $\zeta\in\mathbb{D}_{1}^\infty$,
$$u(\zeta)\geq\limsup_{n\rightarrow\infty}u(\zeta_1,\ldots,\zeta_n,\eta_{n+1},\eta_{n+2},\ldots)
=\limsup_{n\rightarrow\infty}u_n(\zeta_1,\ldots,\zeta_n)=a,$$
which leads to the desired conclusion.
\end{proof}

\noindent\textbf{Proof of Proposition \ref{thm501}.} It suffices to show that (1) implies (2). Put
$$\rho(\zeta)=\log|F(\zeta)|-\int_{\mathbb{T}^\infty}\mathbf{P}_{\zeta}\log|F^{*}|dm_\infty,\quad\zeta\in\mathbb{D}_1^\infty.$$
Since $\log|F^{*}|\in L^{1}(\mathbb{T}^\infty)$, $\rho$ is $\infty$-subharmonic on $\mathbb{D}_1^\infty$. By Proposition \ref{thm2108}, $\rho\leq0$ on $\mathbb{D}_1^\infty$.
Noticing that $\rho(\xi)=0$ for some $\xi\in\mathbb{D}_1^\infty$, it follows from Lemma \ref{thmB2104} that $\rho$ is identically zero on $\mathbb{D}_1^\infty$, as desired. $\hfill\square$

\vskip2mm

For every nonzero function $F\in N_{*}(\mathbb{D}_1^\infty)$ and $\zeta\in\mathbb{D}_{1}^\infty$, $\mathbf{P}_\zeta\log|F^{*}|\in L^{1}(\mathbb{T}^\infty)$. A combination of this fact and Proposition \ref{thm501} immediately gives the following result.

\begin{cor}\label{thm502}
If $F\in N_{*}(\mathbb{D}_1^\infty)$ is outer, then $F$ is zero-free in $\mathbb{D}_{1}^\infty$.
\end{cor}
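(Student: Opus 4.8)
The plan is to argue by contradiction, leaning entirely on Proposition \ref{thm501} together with the integrability observation recorded just before the statement. Since $F$ is outer it is in particular a nonzero element of $N_{*}(\mathbb{D}_1^\infty)$, so Theorem \ref{thm2102} guarantees that $\log|F^{*}|\in L^{1}(\mathbb{T}^\infty)$. The heart of the matter is to observe that for each $\zeta\in\mathbb{D}_1^\infty$ the right-hand side of the outer identity is an honest finite real number. Indeed, when $\zeta\in\mathbb{D}_1^\infty$ the kernel $\mathbf{P}_\zeta$ is continuous on the compact group $\mathbb{T}^\infty$, hence bounded, so that $\mathbf{P}_\zeta\log|F^{*}|\in L^{1}(\mathbb{T}^\infty)$ and
$$\int_{\mathbb{T}^\infty}\mathbf{P}_\zeta\log|F^{*}|\,dm_\infty>-\infty.$$
This is precisely the point where the passage from $\mathbb{D}^\infty$ to the summable domain $\mathbb{D}_1^\infty$ is exploited: for $\zeta\in\mathbb{D}_1^\infty$ the infinite product defining $\mathbf{P}_\zeta$ converges to a bounded continuous function, which is not available for general $\zeta\in\mathbb{D}^\infty$.

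With finiteness in hand the conclusion is immediate. Suppose, for contradiction, that $F(\zeta_0)=0$ for some $\zeta_0\in\mathbb{D}_1^\infty$; then $\log|F(\zeta_0)|=-\infty$. On the other hand, since $F$ is outer, statement (2) of Proposition \ref{thm501} gives
$$\log|F(\zeta_0)|=\int_{\mathbb{T}^\infty}\mathbf{P}_{\zeta_0}\log|F^{*}|\,dm_\infty,$$
and the previous paragraph shows this quantity is finite, a contradiction. Hence $F(\zeta)\neq0$ for every $\zeta\in\mathbb{D}_1^\infty$, i.e.\ $F$ is zero-free. I do not anticipate any genuine obstacle here; the only step requiring care is verifying that the Poisson integral is bounded below (equivalently, that $\mathbf{P}_\zeta\log|F^{*}|$ is integrable rather than merely having an integrable positive part), and this rests solely on the boundedness of $\mathbf{P}_\zeta$ for $\zeta\in\mathbb{D}_1^\infty$ combined with $\log|F^{*}|\in L^{1}(\mathbb{T}^\infty)$.
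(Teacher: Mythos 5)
Your proof is correct and follows essentially the same route as the paper: the authors also deduce the corollary by combining statement (2) of Proposition \ref{thm501} with the observation that $\mathbf{P}_\zeta\log|F^{*}|\in L^{1}(\mathbb{T}^\infty)$ for $\zeta\in\mathbb{D}_1^\infty$ (via boundedness of $\mathbf{P}_\zeta$ and $\log|F^{*}|\in L^{1}(\mathbb{T}^\infty)$ from Theorem \ref{thm2102}), so that the Poisson integral is finite while $\log|F(\zeta_0)|=-\infty$ at a zero would force a contradiction. Your explicit verification of the finiteness step is exactly the point the paper leaves as a remark preceding the corollary.
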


For $0<p\leq\infty$, the proof of Proposition \ref{thm2108} implies that $H^p(\mathbb{D}_{2}^\infty)\subset N_{*}(\mathbb{D}_1^\infty)$.
Therefore, when $F\in H^p(\mathbb{D}_{2}^\infty)$ is outer, $F$ is zero-free in $\mathbb{D}_{1}^\infty$.
In fact, it is also zero-free in $\mathbb{D}_{2}^\infty$.

\begin{cor}\label{thm503}
Let $0<p\leq\infty$. If $F\in H^p(\mathbb{D}_{2}^\infty)$ is outer, then $F$ is zero-free in $\mathbb{D}_{2}^\infty$.
\end{cor}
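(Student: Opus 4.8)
The plan is to reduce everything to the $\mathbb{D}_1^\infty$-theory already developed and then to promote the outer identity from $\mathbb{D}_1^\infty$ to the larger domain $\mathbb{D}_2^\infty$ by a limiting argument. Since the proof of Proposition~\ref{thm2108} gives $H^p(\mathbb{D}_2^\infty)\subset N_{*}(\mathbb{D}_1^\infty)$, a nonzero outer $F\in H^p(\mathbb{D}_2^\infty)$ is outer as an element of $N_{*}(\mathbb{D}_1^\infty)$, so Corollary~\ref{thm502} already yields $F(\zeta)\neq0$ for every $\zeta\in\mathbb{D}_1^\infty$. As $\mathbb{D}_1^\infty$ is dense in $\mathbb{D}_2^\infty$ (e.g. truncations $A_n\zeta_0\to\zeta_0$ in $\ell^2$), the only remaining task is to rule out zeros at points $\zeta_0\in\mathbb{D}_2^\infty\setminus\mathbb{D}_1^\infty$, and for those the elementary pointwise estimates such as Lemma~\ref{thm2105B} are useless, since $\|\mathbf{P}_{\zeta_0}\|_\infty=\infty$ there.

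To reach such a $\zeta_0$ from inside $\mathbb{D}_1^\infty$ I would use the dilations $\zeta^{(r)}=(r^n\zeta_{0,n})_{n\geq1}$ for $0<r<1$. By the Cauchy--Schwarz inequality $\sum_n r^n|\zeta_{0,n}|\leq(\sum_n r^{2n})^{1/2}\|\zeta_0\|_2<\infty$, so $\zeta^{(r)}\in\ell^1\cap\mathbb{D}^\infty=\mathbb{D}_1^\infty$, while $\|\zeta^{(r)}-\zeta_0\|_2^2=\sum_n(1-r^n)^2|\zeta_{0,n}|^2\to0$ as $r\to1$; continuity of $F$ on $\mathbb{D}_2^\infty$ then gives $F(\zeta^{(r)})\to F(\zeta_0)$. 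Because each $\zeta^{(r)}\in\mathbb{D}_1^\infty$ and $F$ is outer, Proposition~\ref{thm501}(2) supplies the identity $\log|F(\zeta^{(r)})|=\int_{\mathbb{T}^\infty}\mathbf{P}_{\zeta^{(r)}}\log|F^{*}|\,dm_\infty$. If I can show that the right-hand side stays bounded below as $r\to1$, then $\log|F(\zeta_0)|=\lim_{r\to1}\log|F(\zeta^{(r)})|>-\infty$, forcing $F(\zeta_0)\neq0$, which is exactly the assertion.

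Equivalently, and more in the spirit of Proposition~\ref{thm501}, I would run the maximum-principle argument directly on $\mathbb{D}_2^\infty$. Set $\rho(\zeta)=\log|F(\zeta)|-\int_{\mathbb{T}^\infty}\mathbf{P}_\zeta\log|F^{*}|\,dm_\infty$, where $\mathbf{P}_\zeta$ is the $\ell^2$-Poisson kernel of Cole--Gamelin~\cite{CG}, defined $m_\infty$-almost everywhere for $\zeta\in\mathbb{D}_2^\infty$. The integral term is $\infty$-harmonic on $\mathbb{D}_2^\infty$ and, using that $\mathbf{P}_\zeta\,dm_\infty$ is a Jensen measure for $H^p(\mathbb{D}_2^\infty)$, one gets $\rho\leq0$; outerness gives $\rho(0)=0$ by Proposition~\ref{thm501}(3). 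The maximum principle of Lemma~\ref{thmB2104} should extend verbatim to $\mathbb{D}_2^\infty$, its proof using only separate subharmonicity and upper semicontinuity, so $\rho\equiv0$ and $\log|F(\zeta_0)|=\int_{\mathbb{T}^\infty}\mathbf{P}_{\zeta_0}\log|F^{*}|\,dm_\infty$.

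In either route the single substantive obstacle is the same: showing that the $\ell^2$-Poisson integral of $\log|F^{*}|$ is finite at $\zeta_0$, i.e. that $\int_{\mathbb{T}^\infty}\mathbf{P}_{\zeta_0}\log^{-}|F^{*}|\,dm_\infty<\infty$ (the $\log^{+}$ part being nonnegative and, via $\log^{+}x\leq\tfrac1p x^p$ with $F^{*}\in L^p(\mathbb{T}^\infty)$, also controlled). The crude estimate $\|\mathbf{P}_{\zeta^{(r)}}\|_\infty\|\log^{-}|F^{*}|\|_1$ is worthless, because $\|\mathbf{P}_{\zeta^{(r)}}\|_\infty\to\infty$ precisely when $\zeta_0\notin\ell^1$. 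Instead I expect to exploit that for $\zeta_0\in\ell^2$ the kernel $\mathbf{P}_{\zeta_0}$ is a genuine $L^1(\mathbb{T}^\infty)$ density and that $\mathbf{P}_{\zeta_0}\,dm_\infty$ represents point evaluation on $H^p(\mathbb{D}_2^\infty)$, so that $\log^{-}|F^{*}|$ is integrable against it; the almost-everywhere convergence $\mathbf{P}_{\zeta^{(r)}}\to\mathbf{P}_{\zeta_0}$ together with Fatou's lemma would then let me pass to the limit. Making this integrability and limit transition rigorous—rather than the soft density or maximum-principle steps—is where the real work lies.
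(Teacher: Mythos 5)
Your first step---using $H^p(\mathbb{D}_2^\infty)\subset N_{*}(\mathbb{D}_1^\infty)$ and Corollary \ref{thm502} to get zero-freeness on $\mathbb{D}_1^\infty$---is exactly how the paper begins, but from there the proposal has a genuine gap, and you have correctly located it yourself: the finiteness of $\int_{\mathbb{T}^\infty}\mathbf{P}_{\zeta_0}\log^{-}|F^{*}|\,dm_\infty$ and the validity of Jensen's inequality $\log|F(\zeta)|\leq\int_{\mathbb{T}^\infty}\mathbf{P}_{\zeta}\log|F^{*}|\,dm_\infty$ at points $\zeta\in\mathbb{D}_2^\infty\setminus\mathbb{D}_1^\infty$ are not established anywhere in the paper (the inequality (\ref{equB21011}) and Proposition \ref{thm501} are stated and proved only for $\zeta\in\mathbb{D}_1^\infty$, where $\mathbf{P}_\zeta$ is bounded), and the tools you name do not close the gap. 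Fatou's lemma runs in the wrong direction: from $\mathbf{P}_{\zeta^{(r)}}\to\mathbf{P}_{\zeta_0}$ a.e.\ it gives $\int_{\mathbb{T}^\infty}\mathbf{P}_{\zeta_0}\log^{-}|F^{*}|\,dm_\infty\leq\liminf_{r\to1}\int_{\mathbb{T}^\infty}\mathbf{P}_{\zeta^{(r)}}\log^{-}|F^{*}|\,dm_\infty$, whereas what you need is an upper bound on the right-hand integrals as $r\to1$, i.e.\ a lower bound on $\log|F(\zeta^{(r)})|$---which, by continuity of $F$, is equivalent to $F(\zeta_0)\neq0$, the very thing being proved; the argument is circular. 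Nor can you dominate: Harnack's inequality applied factorwise gives $\mathbf{P}_{\zeta^{(r)}}\leq\left(\prod_{n}\frac{1+\rho_n}{1-\rho_n}\right)\mathbf{P}_{\zeta_0}$ with $\rho_n$ the pseudohyperbolic distance between $r^n\zeta_{0,n}$ and $\zeta_{0,n}$, and for fixed $r<1$ one has $\rho_n\sim|\zeta_{0,n}|$ as $n\to\infty$, so the product diverges precisely when $\zeta_0\notin\ell^1$---the only case at issue. Your second route assumes the same unproved Jensen property at $\ell^2$-points when asserting $\rho\leq0$ on $\mathbb{D}_2^\infty$ (extending the maximum principle of Lemma \ref{thmB2104} to $\mathbb{D}_2^\infty$ is indeed routine; that is not where the difficulty sits).

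The paper avoids the boundary-integral question entirely with a soft two-variable slice argument that you should compare with your plan. Supposing $F(\zeta)=0$ for some $\zeta\in\mathbb{D}_2^\infty$, choose $\eta\in\mathbb{D}$ and $N$ with $(\eta^{-1}\zeta_1,\ldots,\eta^{-1}\zeta_N,2\zeta_{N+1},2\zeta_{N+2},\ldots)\in\mathbb{D}_2^\infty$, and set $\phi(z_1,z_2)=F(z_1\eta^{-1}\zeta_1,\ldots,z_1\eta^{-1}\zeta_N,2z_2\zeta_{N+1},2z_2\zeta_{N+2},\ldots)$ on $\mathbb{D}^2$. The finite truncations $\phi_n$ (tail set to zero beyond $N+n$) take their arguments in $\mathbb{D}_1^\infty$, since these have only finitely many nonzero coordinates, so each $\phi_n$ is zero-free by Corollary \ref{thm502}; they converge locally uniformly to $\phi$, and $\phi(\eta,\tfrac{1}{2})=F(\zeta)=0$, so Hurwitz's theorem forces $\phi\equiv0$; evaluating at $z_2=0$ gives $F(\zeta_1,\ldots,\zeta_N,0,\ldots)=\phi(\eta,0)=0$, contradicting zero-freeness in $\mathbb{D}_1^\infty$. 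In short, the paper converts the question into a purely qualitative one about zeros of limits of zero-free holomorphic functions, needing no Poisson kernel at $\ell^2$-points. To salvage your route you would first have to prove Jensen's inequality for $H^p(\mathbb{D}_2^\infty)$ at every $\zeta\in\mathbb{D}_2^\infty$ (plausibly via \cite{CG} and a uniform-integrability argument with the $L^1$-density $\mathbf{P}_\zeta$), a statement of independent interest but of difficulty at least comparable to the corollary itself.
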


\begin{proof}
Assume that there exists $\zeta\in\mathbb{D}_{2}^\infty$ such that $F(\zeta)=0$. Choose $\eta\in\mathbb{D}$ and an integer $N$ such that
$$(\eta^{-1}\zeta_{1},\ldots,\eta^{-1}\zeta_{N},2\zeta_{N+1},2\zeta_{N+2},\ldots)\in\mathbb{D}_{2}^\infty.$$
For each $n\in\mathbb{N}$, set
$$\phi_{n}(z)=F(z_{1}\eta^{-1}\zeta_{1},\ldots,z_{1}\eta^{-1}\zeta_{N},2z_{2}\zeta_{N+1},\ldots,2z_{2}\zeta_{N+n},0,\ldots),\quad z=(z_1,z_2)\in\mathbb{D}^{2},$$
and
$$\phi(z)=F(z_{1}\eta^{-1}\zeta_{1},\ldots,z_{1}\eta^{-1}\zeta_{N},2z_{2}\zeta_{N+1},2z_{2}\zeta_{N+2},\ldots),\quad z=(z_1,z_2)\in\mathbb{D}^{2}.$$
Then $\phi_{n}$ is holomorphic on $\mathbb{D}^{2}$, and $\{\phi_{n}\}_{n=1}^{\infty}$ converges uniformly to $\phi$ on each compact subset of $\mathbb{D}^{2}$. Therefore, $\phi$ is holomorphic on $\mathbb{D}^{2}$. Since $F$ is zero-free in $\mathbb{D}_{1}^\infty$, $\phi_{n}$ is zero-free in $\mathbb{D}^{2}$. Noticing that $\phi(\eta,\frac{1}{2})=0$, Hurwitz's theorem \cite[pp.310, Exercise 3]{Kra} implies that $\phi$ is identically zero on $\mathbb{D}^2$, and hence
$$F(\zeta_{1},\ldots,\zeta_{N},0,\ldots)=\phi(\eta,0)=0,$$
a contradiction to that $F$ is zero-free in $\mathbb{D}_{1}^\infty$.
\end{proof}

The following corollary will be used in Section 3.

\begin{cor}\label{thm503B}
If $F\in N_{*}(\mathbb{D}_1^\infty)$ is outer, then $\frac{1}{F}\in N_{*}(\mathbb{D}_1^\infty)$ is outer.
\end{cor}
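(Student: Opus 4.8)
The plan is to transport the outer identity of Proposition \ref{thm501} from $F$ to $G=1/F$. First I would note that, by Corollary \ref{thm502}, an outer $F\in N_{*}(\mathbb{D}_1^\infty)$ is zero-free on $\mathbb{D}_1^\infty$, so $G=1/F$ is a well-defined, nonzero holomorphic function on $\mathbb{D}_1^\infty$. Since $F$ is outer, Proposition \ref{thm501} gives $\log|F(\zeta)|=\int_{\mathbb{T}^\infty}\mathbf{P}_\zeta\log|F^{*}|\,dm_\infty$ for every $\zeta\in\mathbb{D}_1^\infty$, and because $F\neq0$ we know $\log|F^{*}|\in L^{1}(\mathbb{T}^\infty)$ by Theorem \ref{thm2102}.

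Taking reciprocals gives $\log|G(\zeta)|=-\log|F(\zeta)|=\int_{\mathbb{T}^\infty}\mathbf{P}_\zeta\,(-\log|F^{*}|)\,dm_\infty$. Since $-\log|F^{*}|\in L^{1}(\mathbb{T}^\infty)$, the right-hand side is the Poisson integral of the finite measure $-\log|F^{*}|\,dm_\infty$, hence an element of $h^{1}(\mathbb{D}_1^\infty)$ that majorizes (indeed equals) $\log|G|$. Proposition \ref{thmB2103} then yields $G\in N(\mathbb{D}_1^\infty)$. Now that $G$ lies in the Nevanlinna class, Theorem \ref{thm2102} provides the radial limit $G^{*}$ almost everywhere on $\mathbb{T}^\infty$; and because $\log|F^{*}|\in L^{1}$ forces $F^{*}\neq0$ a.e., passing to the limit in $G_{[r]}=1/F_{[r]}$ identifies $G^{*}=1/F^{*}$, so $\log|G^{*}|=-\log|F^{*}|$ a.e. Substituting this back gives $\log|G(\zeta)|=\int_{\mathbb{T}^\infty}\mathbf{P}_\zeta\log|G^{*}|\,dm_\infty$ for every $\zeta\in\mathbb{D}_1^\infty$.

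With this identity in hand the proof closes quickly: the resulting inequality $\log|G(\zeta)|\leq\int_{\mathbb{T}^\infty}\mathbf{P}_\zeta\log|G^{*}|\,dm_\infty$, together with $G\in N(\mathbb{D}_1^\infty)$, lets me invoke the converse direction of Proposition \ref{thm2108} to conclude $G\in N_{*}(\mathbb{D}_1^\infty)$; and the full equality is precisely condition (2) of Proposition \ref{thm501}, so $G$ is outer.

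I expect the only genuinely delicate point to be the a.e.\ identification $G^{*}=1/F^{*}$: one must first secure $G\in N(\mathbb{D}_1^\infty)$ so that $G^{*}$ exists, and then use $F^{*}\neq0$ a.e.\ (a consequence of $\log|F^{*}|\in L^{1}$) to justify that the radial limit of $1/F_{[r]}$ is the reciprocal of the radial limit of $F_{[r]}$. Everything else is a direct substitution into Propositions \ref{thm501}, \ref{thmB2103} and \ref{thm2108}.
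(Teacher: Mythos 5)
Your proof is correct and follows the paper's skeleton --- zero-freeness from Corollary \ref{thm502}, negation of the outer identity, then the pair Proposition \ref{thm2108}/Proposition \ref{thm501} to conclude --- but you secure the intermediate membership $\frac{1}{F}\in N(\mathbb{D}_1^\infty)$ by a different lemma. The paper gets it from Proposition \ref{thmB2101}: since $\log|(1/F)_{[r]}|=-\log|F_{[r]}|$, the quantity $\limsup_{r\rightarrow1}\big\|\log|F_{[r]}|\big\|_1$ in criterion (3) is invariant under $F\mapsto 1/F$, so $\frac{1}{F}\in N(\mathbb{D}_1^\infty)$ comes for free from $F\in N(\mathbb{D}_1^\infty)$, with no boundary data needed at that stage. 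You instead observe that $\log|1/F|$ is exactly the Poisson integral $\mathrm{P}[-\log|F^{*}|dm_\infty]$, which lies in $h^1(\mathbb{D}_1^\infty)$ because $\log|F^{*}|\in L^1(\mathbb{T}^\infty)$, and invoke the harmonic-majorant characterization of Proposition \ref{thmB2103}; this is equally valid and extracts the majorant directly from the outer identity you must use anyway, at the cost of needing $\log|F^{*}|\in L^1$ slightly earlier. A genuine merit of your write-up is that you make explicit the a.e.\ identification $(1/F)^{*}=1/F^{*}$, justified by $F^{*}\neq0$ a.e.\ (a consequence of $\log|F^{*}|\in L^{1}(\mathbb{T}^\infty)$) together with the existence of the radial limit of $1/F$ once membership in $N(\mathbb{D}_1^\infty)$ is known --- a step the paper's proof passes over silently when it writes $\log\left|\frac{1}{F^{*}}\right|$ in the Poisson formula.
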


\begin{proof}
Since $F$ is outer, it follows from Corollary \ref{thm502} that $F$ is zero-free in $\mathbb{D}_{1}^\infty$.
Hence by Proposition \ref{thmB2101}, $\frac{1}{F}\in N(\mathbb{D}_1^\infty)$.
On the other hand, for every $\zeta\in\mathbb{D}_{1}^\infty$,
$$\log|F(\zeta)|=\int_{\mathbb{T}^\infty}\mathbf{P}_{\zeta}\log|F^{*}|dm_\infty,$$
and hence
$$\log\left|\frac{1}{F(\zeta)}\right|=\int_{\mathbb{T}^\infty}\mathbf{P}_{\zeta}\log\left|\frac{1}{F^{*}}\right|dm_\infty.$$
Combining this equality and Proposition \ref{thm2108} gives $\frac{1}{F}\in N_{*}(\mathbb{D}_1^\infty)$ is outer.
\end{proof}

In what follows we consider cyclic vectors.
A closed subspace $S\subset N_{*}(\mathbb{D}_1^\infty)$ is said to be invariant, if for every $F\in S$ and $q\in\mathcal{P}_\infty$, $qF\in S$.
We say that a function $F\in N_{*}(\mathbb{D}_1^\infty)$ is cyclic, if the invariant subspace generated by $F$ is exactly $N_{*}(\mathbb{D}_1^\infty)$.
For $0<p<\infty$, cyclic vectors in the Hardy space $H^{p}(\mathbb{D}_{2}^\infty)$ can be defined similarly.

The following theorem gives a quantitative description of cyclic vectors in $N_{*}(\mathbb{D}_1^\infty)$. Although its proof is similar as in \cite[Theorem 4.4.6]{Ru2}, we present it here for completeness.

\begin{thm}\label{thm504}
Each cyclic vector in $N_{*}(\mathbb{D}_1^\infty)$ is outer.
\end{thm}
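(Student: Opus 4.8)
The plan is to verify the outer condition (3) of Proposition \ref{thm501}, namely $\log|F(0)|=\int_{\mathbb{T}^\infty}\log|F^{*}|dm_\infty$. Since $F\in N_{*}(\mathbb{D}_1^\infty)$, Proposition \ref{thm2108} already supplies the inequality $\log|F(0)|\le\int_{\mathbb{T}^\infty}\log|F^{*}|dm_\infty$ (note that $\mathbf{P}_0\equiv1$), so the entire task is to establish the reverse inequality. Cyclicity is the engine: because the closed invariant subspace generated by $F$ is all of $N_{*}(\mathbb{D}_1^\infty)$ and multiplication by polynomials is continuous (Proposition \ref{thm2105Z}), the constant $1$ lies in the closure of $\{qF:q\in\mathcal{P}_\infty\}$, so there exist polynomials $q_n\in\mathcal{P}_\infty$ with $q_nF\to1$ in $N_{*}(\mathbb{D}_1^\infty)$. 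In particular $F\ne0$, whence $\log|F^{*}|\in L^1(\mathbb{T}^\infty)$ by Theorem \ref{thm2102}.

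From this convergence I would extract two consequences. First, passing to boundary values via Proposition \ref{thm2103} gives $q_nF^{*}\to1$ in $L^0(\mathbb{T}^\infty)$; combined with the elementary inequality (\ref{equ2103G}), this forces $\int_{\mathbb{T}^\infty}\log^{+}|q_nF^{*}|dm_\infty\to0$. Second, since $\mathbf{P}_0\equiv1$, Lemma \ref{thm2105B} makes the evaluation at $0$ continuous, so $q_n(0)F(0)\to1$; hence $F(0)\ne0$, the numbers $q_n(0)$ are eventually nonzero, and $\log|q_n(0)F(0)|\to0$.

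The crux is upgrading the control on $\log^{+}$ to control on the full integral $\int_{\mathbb{T}^\infty}\log|q_nF^{*}|dm_\infty$. Applying Proposition \ref{thm2108} to $q_nF\in N_{*}(\mathbb{D}_1^\infty)$ at the origin yields $\log|q_n(0)F(0)|\le\int_{\mathbb{T}^\infty}\log|q_nF^{*}|dm_\infty$, that is, $\int_{\mathbb{T}^\infty}\log^{-}|q_nF^{*}|dm_\infty\le\int_{\mathbb{T}^\infty}\log^{+}|q_nF^{*}|dm_\infty-\log|q_n(0)F(0)|$, whose right-hand side tends to $0$; together with $\int\log^{-}\ge0$ this gives $\int_{\mathbb{T}^\infty}\log^{-}|q_nF^{*}|dm_\infty\to0$, and therefore $\int_{\mathbb{T}^\infty}\log|q_nF^{*}|dm_\infty\to0$. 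Since $\log|q_nF^{*}|=\log|q_n|+\log|F^{*}|$ almost everywhere, both summands being integrable, I conclude $\int_{\mathbb{T}^\infty}\log|q_n|dm_\infty\to-\int_{\mathbb{T}^\infty}\log|F^{*}|dm_\infty$.

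Finally I would invoke the Jensen inequality (\ref{equB21011}) for the polynomial $q_n\in A(\mathbb{D}^\infty)$ at the origin, giving $\log|q_n(0)|\le\int_{\mathbb{T}^\infty}\log|q_n|dm_\infty$. Letting $n\to\infty$ and using $\log|q_n(0)|=\log|q_n(0)F(0)|-\log|F(0)|\to-\log|F(0)|$ together with the limit of $\int\log|q_n|$ just computed yields $-\log|F(0)|\le-\int_{\mathbb{T}^\infty}\log|F^{*}|dm_\infty$, i.e.\ the desired reverse inequality; equality with Proposition \ref{thm2108} then makes $F$ outer. The main obstacle is exactly the passage of the third paragraph: the $N_{*}$-topology only furnishes the uniformly integrable family $\{\log^{+}|q_nF^{*}|\}$, while the negative part $\log^{-}|q_nF^{*}|$ could a priori carry mass escaping to infinity; taming it is precisely where the pointwise Jensen inequality at $0$ is used to squeeze $\int\log^{-}$ between $0$ and a null sequence.
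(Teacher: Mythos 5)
Your proof is correct, and it reaches the paper's conclusion by a route that overlaps substantially with, but genuinely deviates from, the paper's own argument. The paper (following Rudin \cite[Theorem 4.4.6]{Ru2}) introduces the Jensen-defect functional $\Gamma F=\int_{\mathbb{T}^\infty}\log|F^{*}|dm_\infty-\log|F(0)|$, proves it upper semicontinuous on $N_{*}(\mathbb{D}_1^\infty)\setminus\{0\}$, and concludes from $\|q_nF-1\|_0\to0$ via the chain $0=\Gamma 1\geq\limsup_n\Gamma(q_nF)\geq\limsup_n\Gamma q_n+\Gamma F\geq\Gamma F$. Three of your ingredients coincide with the paper's: the $\log^{+}$ control through inequality (\ref{equ2103G}) and Proposition \ref{thm2103}, the continuity of evaluation at $0$ through Lemma \ref{thm2105B}, and Jensen's inequality for polynomials (your closing step $\log|q_n(0)|\leq\int_{\mathbb{T}^\infty}\log|q_n|dm_\infty$ is precisely the paper's observation $\Gamma q_n\geq0$). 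The genuine difference is how the negative part of $\log|q_nF^{*}|$ is tamed: the paper extracts an a.e.-convergent subsequence and applies Fatou's lemma to $\log^{-}|G_{n_k}^{*}|$, which yields semicontinuity of $\Gamma$ at an \emph{arbitrary} limit $G$; you instead apply Proposition \ref{thm2108} to the products $q_nF$ at the origin and squeeze $\int_{\mathbb{T}^\infty}\log^{-}|q_nF^{*}|dm_\infty$ between $0$ and a null sequence, exploiting that the limit here is the constant $1$, whose Jensen defect vanishes. Your specialization avoids subsequence extraction and Fatou altogether and gives the slightly stronger statement $\int_{\mathbb{T}^\infty}\log|q_nF^{*}|dm_\infty\to0$; the paper's version buys a reusable upper-semicontinuity lemma at the cost of invoking additivity of $\Gamma$ over products to finish. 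Two small points you should make explicit: the closure of $\{qF:q\in\mathcal{P}_\infty\}$ is invariant by Proposition \ref{thm2105Z}, so cyclicity indeed yields $q_nF\to1$; and since $q_n(0)\to1/F(0)\neq0$, the polynomials $q_n$ are eventually nonzero, so $\log|q_n|\in L^1(\mathbb{T}^\infty)$ and the splitting $\log|q_nF^{*}|=\log|q_n|+\log|F^{*}|$ is a sum of integrable functions — both of which your write-up gestures at but relies on.
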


\begin{proof}
For a nonzero function $F\in N_{*}(\mathbb{D}_1^\infty)$, set
$$\Gamma F=\int_{\mathbb{T}^\infty}\log|F^{*}|dm_\infty-\log|F(0)|.$$
Then Proposition \ref{thm501} yields that $F$ is outer if and only if $\Gamma F=0$.
We claim that $\Gamma$ is upper semicontinuous on $N_{*}(\mathbb{D}_1^\infty)\backslash\{0\}$.
Indeed, let $\{G_n\}_{n=1}^\infty$ be a sequence in $N_{*}(\mathbb{D}_1^\infty)\backslash\{0\}$ and $G\in N_{*}(\mathbb{D}_1^\infty)\backslash\{0\}$ such that for all $n\in\mathbb{N}$, $\Gamma G_n\geq c\geq0$, and $\|G_n-G\|_0\rightarrow0$ as $n\rightarrow\infty$.
Hence by (\ref{equ2103G}) and Proposition \ref{thm2103},
$$\left|\int_{\mathbb{T}^\infty}\log^{+}|G_n^{*}|dm_\infty-\int_{\mathbb{T}^\infty}\log^{+}|G^{*}|dm_\infty\right|\leq\|G_n^{*}-G^{*}\|_0=\|G_n-G\|_0,$$
and thus
\begin{equation}\label{equ503}
\lim_{n\rightarrow\infty}\int_{\mathbb{T}^\infty}\log^{+}|G_n^{*}|dm_\infty=\int_{\mathbb{T}^\infty}\log^{+}|G^{*}|dm_\infty.
\end{equation}
On the other hand, by the convergence of $\{G_n^*\}_{n=1}^\infty$ in $L^{0}(\mathbb{T}^{\infty})$,
there exists a subsequence $\{G_{n_k}^{*}\}_{k=1}^\infty$ converging to $G^*$ almost everywhere on $\mathbb{T}^\infty$.
Then it follows from Fatou's lemma that
\begin{equation}\label{equ504}
\int_{\mathbb{T}^\infty}\log^{-}|G^{*}|dm_\infty\leq\liminf_{k\rightarrow\infty}\int_{\mathbb{T}^\infty}\log^{-}|G_{n_k}^*|dm_\infty.
\end{equation}
A combination of (\ref{equ503}) and (\ref{equ504}) shows that
$$\int_{\mathbb{T}^\infty}\log|G^*|dm_\infty\geq\limsup_{k\rightarrow\infty}\int_{\mathbb{T}^\infty}\log|G_{n_k}^*|dm_\infty.$$
Noticing that $\|G_n-G\|_0\rightarrow0$ as $n\rightarrow\infty$,
Lemma \ref{thm2105B} gives $G_n(0)\rightarrow G(0)$ as $n\rightarrow\infty$. Therefore, $\Gamma G\geq\limsup_{k\rightarrow\infty}\Gamma G_{n_{k}}\geq c$, and hence the claim holds.

Now we assume that $F$ is cyclic in $N_{*}(\mathbb{D}_1^\infty)$, then there is a sequence of polynomials $\{q_n\}_{n=1}^\infty$ in $\mathcal{P}_\infty$ such that $\|q_n F-1\|_0\rightarrow0\;(n\rightarrow\infty)$. Hence
$$0=\Gamma 1\geq\limsup_{n\rightarrow\infty}\Gamma(q_n F)\geq\limsup_{n\rightarrow\infty}\Gamma q_n+\Gamma F\geq \Gamma F,$$
forcing $\Gamma F=0$, and thus $F$ is outer.
\end{proof}

We mention that for every $0<p<\infty$, there is a constant $C_p>0$ such that
\begin{equation}\label{equ2200}
\|f\|_0\leq C_p\|f\|_p^{\min\{p,1\}},\quad f\in L^{p}(\mathbb{T}^\infty).
\end{equation}
Hence for every $F\in H^p(\mathbb{D}_{2}^\infty)\subset N_*(\mathbb{D}_{1}^\infty)$,
$$\|F\|_0=\sup_{0<r<1}\|F_{[r]}\|_0\leq C_p\sup_{0<r<1}\|F_{[r]}\|_p=C_p\|F\|_p.$$
A combination of this fact and Theorem \ref{thm504} gives the following corollary.

\begin{cor}\label{thm505}
If $0<p<\infty$, then each cyclic vector in $H^{p}(\mathbb{D}_{2}^\infty)$ is outer.
\end{cor}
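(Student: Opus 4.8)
The plan is to obtain the corollary as a direct consequence of Theorem \ref{thm504} together with the norm comparison recorded just above it, by transferring cyclicity from $H^p(\mathbb{D}_2^\infty)$ to $N_*(\mathbb{D}_1^\infty)$. First I would unwind the hypothesis: if $F$ is cyclic in $H^p(\mathbb{D}_2^\infty)$, then the invariant subspace it generates is the whole space, so in particular the constant function $1$ lies in the $\|\cdot\|_p$-closure of $\{qF:q\in\mathcal{P}_\infty\}$. Hence there is a sequence $\{q_n\}_{n=1}^\infty\subset\mathcal{P}_\infty$ with $\|q_nF-1\|_p\to0$ as $n\to\infty$.

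The key step is to upgrade this to convergence in the metric $d_0$ of $N_*(\mathbb{D}_1^\infty)$. Since each $q_n$ is a bounded polynomial and $F\in H^p(\mathbb{D}_2^\infty)$, the products $q_nF$ and the differences $q_nF-1$ again lie in $H^p(\mathbb{D}_2^\infty)$. Applying the inequality (\ref{equ2200}) in the form $\|G\|_0\leq C_p\|G\|_p^{\min\{p,1\}}$ to $G=q_nF-1$ gives $\|q_nF-1\|_0\to0$. Because $H^p(\mathbb{D}_2^\infty)\subset N_*(\mathbb{D}_1^\infty)$, this says precisely that the polynomial multiples $q_nF$ converge to $1$ in $N_*(\mathbb{D}_1^\infty)$.

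From here I would conclude that $F$ is cyclic in $N_*(\mathbb{D}_1^\infty)$: the closed invariant subspace generated by $F$ contains $1$, and since multiplication is sequentially continuous by Proposition \ref{thm2105Z}, it contains $q=\lim_n q\,(q_nF)$ for every $q\in\mathcal{P}_\infty$, hence all of $N_*(\mathbb{D}_1^\infty)$ by density of $\mathcal{P}_\infty$; Theorem \ref{thm504} then yields that $F$ is outer. If one prefers to bypass the density statement, the same conclusion follows by rerunning the last paragraph of the proof of Theorem \ref{thm504} verbatim with this sequence $\{q_n\}$, using the upper semicontinuity of $\Gamma$ and $\Gamma q_n\geq0$ to get $0=\Gamma 1\geq\limsup_n\Gamma(q_nF)\geq\limsup_n\Gamma q_n+\Gamma F\geq\Gamma F\geq0$. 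I do not anticipate a genuine obstacle; the only point deserving care is the cyclicity transfer, namely checking that $q_nF-1$ truly belongs to $H^p(\mathbb{D}_2^\infty)$ so that (\ref{equ2200}) applies, and observing that one never needs full cyclicity in $N_*(\mathbb{D}_1^\infty)$ but only the approximating sequence $\{q_nF\}$ converging to $1$ in $d_0$.
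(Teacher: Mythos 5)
Your proposal is correct and takes essentially the same route as the paper: the paper's proof consists precisely of the observation that (\ref{equ2200}) yields $\|F\|_0=\sup_{0<r<1}\|F_{[r]}\|_0\leq C_p\|F\|_p^{\min\{p,1\}}$ for $F\in H^p(\mathbb{D}_{2}^\infty)\subset N_*(\mathbb{D}_{1}^\infty)$, so that a sequence with $\|q_nF-1\|_p\to0$ also satisfies $\|q_nF-1\|_0\to0$, after which Theorem \ref{thm504} applies. Your two alternative finishes (density of $\mathcal{P}_\infty$ together with Proposition \ref{thm2105Z}, or rerunning the $\Gamma$-upper-semicontinuity paragraph) are both just the concluding argument of Theorem \ref{thm504} made explicit, so there is no substantive difference.
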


\subsection{Correspondence between $N_{*}(\mathbb{T}^\infty)$ and $N_{*}(\mathbb{D}_1^\infty)$}

The purpose of this subsection is to show that there is a canonical isometric isomorphism between $N_{*}(\mathbb{T}^\infty)$ and $N_{*}(\mathbb{D}_1^\infty)$. We refer readers to \cite{AOS2, DG3} for cases of the Hardy spaces in infinitely many variables.

Recall that the Smirnov class $N_{*}(\mathbb{T}^\infty)$ over the infinite torus is defined to be the closure of $\mathcal{P}_\infty$ in $L^{0}(\mathbb{T}^\infty)$.
A similar argument as in Proposition \ref{thm2105Z} shows that $N_{*}(\mathbb{T}^\infty)$ is a topological algebra.
By (\ref{equ2200}), for every $0<p<\infty$, the space $H^{p}(\mathbb{T}^\infty)$ is contained in $N_{*}(\mathbb{T}^\infty)$. For each point $\zeta\in\mathbb{D}_1^\infty$ and $q\in\mathcal{P}_\infty$, it follows from Lemma \ref{thm2105B} that
$$\log(1+|q(\zeta)|)\leq\|\mathbf{P}_\zeta\|_\infty\int_{\mathbb{T}^\infty}\log(1+|q|)dm_\infty.$$
This means that the evaluation functional $E_\zeta: q\mapsto q(\zeta)$ is continuous on the  dense subspace $\mathcal{P}_\infty$ of $N_{*}(\mathbb{T}^\infty)$, and hence $E_\zeta$ can be continuously extended to the whole space $N_{*}(\mathbb{T}^\infty)$, still denoted by $E_\zeta$.
For every $f\in N_{*}(\mathbb{T}^\infty)$ and $\zeta\in\mathbb{D}_1^\infty$, set $\tilde{f}(\zeta)=E_\zeta f$. Then $\tilde{f}$ defines a function on $\mathbb{D}_{1}^\infty$, and
\begin{equation}\label{equ2201A}
\log(1+|\tilde{f}(\zeta)|)\leq\|\mathbf{P}_\zeta\|_\infty\int_{\mathbb{T}^\infty}\log(1+|f|)dm_\infty.
\end{equation}
When $q\in\mathcal{P}_\infty$, $\tilde{q}=q$ on $\mathbb{D}_1^\infty$, so we will no longer distinguish between $q$ and $\tilde{q}$.

For $0<r<1$ and $M>0$, the domain $V_{r,M}$ in $\ell^1$ is defined by (\ref{equ2103A}). The following lemma is obvious by using (\ref{equ2201A}). It will be used not only in the proof of Propostion \ref{thm2110}, but also in Section 3.

\begin{lem}\label{thm2109}
Suppose that $\{f_n\}_{n=1}^\infty$ is a sequence in $N_{*}(\mathbb{T}^\infty)$. If $\|f_n-f\|_0\rightarrow0$ for some $f\in N_{*}(\mathbb{T}^\infty)$ as $n\rightarrow\infty$, then $\{\tilde{f}_n\}_{n=1}^\infty$ converges to $\tilde{f}$ uniformly on each $V_{r,M}$.
\end{lem}

We now establish the correspondence between $N_{*}(\mathbb{T}^\infty)$ and $N_{*}(\mathbb{D}_1^\infty)$.

\begin{prop}\label{thm2110}
If $f\in N_{*}(\mathbb{T}^\infty)$, then $\tilde{f}\in N_{*}(\mathbb{D}_1^\infty)$.
\end{prop}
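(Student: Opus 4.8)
The plan is to approximate $f$ by polynomials and transfer membership in the Smirnov class through the continuity of the evaluation functionals. Since $f\in N_{*}(\mathbb{T}^\infty)$, by definition there is a sequence $\{q_n\}_{n=1}^\infty$ in $\mathcal{P}_\infty$ with $\|q_n-f\|_0\rightarrow0$. Each polynomial $q_n$, viewed as a holomorphic function on $\mathbb{D}_1^\infty$, clearly belongs to $N_{*}(\mathbb{D}_1^\infty)$ (it extends continuously to $\overline{\mathbb{D}}^\infty$, so the family $\{\log^{+}|(q_n)_{[r]}|\}_{0<r<1}$ is uniformly bounded, hence trivially uniformly integrable). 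The goal is therefore to show that $\{q_n\}_{n=1}^\infty$ converges to $\tilde f$ in the metric $d_0$ of $N_{*}(\mathbb{D}_1^\infty)$; since $N_{*}(\mathbb{D}_1^\infty)$ is a closed subclass of the complete space $N(\mathbb{D}_1^\infty)$ (Proposition \ref{thm2105}), the limit $\tilde f$ will then automatically lie in $N_{*}(\mathbb{D}_1^\infty)$.

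The first step is to identify the pointwise limit. By Lemma \ref{thm2109}, $\{q_n\}_{n=1}^\infty$ converges to $\tilde f$ uniformly on each domain $V_{r,M}$ defined in (\ref{equ2103A}), and in particular pointwise on $\mathbb{D}_1^\infty$; this confirms $\tilde f$ is holomorphic on $\mathbb{D}_1^\infty$. Next I would show $\{q_n\}$ is Cauchy in $d_0$. For fixed $0<r<1$, the key estimate is
\begin{equation}\label{eq:rtransfer}
\int_{\mathbb{T}^\infty}\log(1+|(q_m-q_n)_{[r]}|)\,dm_\infty\leq\int_{\mathbb{T}^\infty}\log(1+|q_m-q_n|)\,dm_\infty=\|q_m-q_n\|_0,
\end{equation}
which follows from Corollary \ref{thm2101B} applied to the $\infty$-subharmonic function $\log(1+|(q_m-q_n)_{[r]}|)\in C(\overline{\mathbb{D}}^\infty)$. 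Taking the supremum over $r$ in (\ref{eq:rtransfer}) gives $\|q_m-q_n\|_0\le\|q_m-q_n\|_0$ on the $N(\mathbb{D}_1^\infty)$ side, so $\{q_n\}$ is Cauchy in $N(\mathbb{D}_1^\infty)$. By completeness it converges to some $F\in N(\mathbb{D}_1^\infty)$, and by the continuity of point evaluations (Lemma \ref{thm2105B}) the limit $F$ agrees with the pointwise limit $\tilde f$; hence $\tilde f\in N(\mathbb{D}_1^\infty)$ and $\|q_n-\tilde f\|_0\rightarrow0$.

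The main obstacle is the last step: concluding that the $d_0$-limit of the $q_n$ actually lands in the Smirnov subclass rather than merely in $N(\mathbb{D}_1^\infty)$. This is resolved precisely because $N_{*}(\mathbb{D}_1^\infty)$ is closed in $N(\mathbb{D}_1^\infty)$: each $q_n$ lies in $N_{*}(\mathbb{D}_1^\infty)$, and a $d_0$-convergent sequence in a closed subclass has its limit in that subclass. Thus $\tilde f\in N_{*}(\mathbb{D}_1^\infty)$, as desired. The one point requiring care is verifying that the $d_0$-limit coincides with the previously constructed extension $\tilde f=E_\zeta f$; this is where Lemma \ref{thm2109} and Lemma \ref{thm2105B} must be invoked together to match the two candidate limit functions pointwise on all of $\mathbb{D}_1^\infty$, after which uniqueness of limits closes the argument.
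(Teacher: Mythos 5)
Your proof is correct, but it reaches the Smirnov membership by a genuinely different mechanism than the paper. The paper, after the identical first step (polynomial approximation plus Lemma \ref{thm2109} to get holomorphy of $\tilde f$), verifies the uniform integrability of $\{\log(1+|\tilde f_{[r]}|)\}_{0<r<1}$ \emph{directly}: it extracts from Lebesgue--Vitali a strongly convex function $\varphi$ with $\sup_n\int_{\mathbb{T}^\infty}\varphi(\log(1+|q_n|))\,dm_\infty<\infty$, then transfers this bound inward via Corollary \ref{thm2101B} and Fatou's lemma to get $\sup_{0<r<1}\int_{\mathbb{T}^\infty}\varphi(\log(1+|\tilde f_{[r]}|))\,dm_\infty<\infty$. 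You instead argue softly: the estimate in your display (which is exactly Corollary \ref{thm2101B} applied to $u=\log(1+|q_m-q_n|)$ --- note the corollary should be applied to this function, with $\log(1+|(q_m-q_n)_{[r]}|)$ being its dilate $u_{[r]}$, a cosmetic slip in your wording) shows $\{q_n\}$ is $d_0$-Cauchy, completeness of $N(\mathbb{D}_1^\infty)$ (Proposition \ref{thm2105}) produces a limit, Lemma \ref{thm2105B} identifies it with $\tilde f$, and the closedness of $N_*(\mathbb{D}_1^\infty)$ in $N(\mathbb{D}_1^\infty)$ finishes. Be aware that the closedness fact is not Proposition \ref{thm2105} as your citation suggests, but the unnumbered proposition following Proposition \ref{thm2103}; its proof rests on Proposition \ref{thm2103} and hence on the boundary-limit theory of Theorem \ref{thm2102}, all established before Proposition \ref{thm2110}, so there is no circularity --- though your route carries heavier dependencies than the paper's, which needs neither radial limits nor completeness at this point. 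What your approach buys is economy and a stronger by-product, namely $\|q_n-\tilde f\|_0\rightarrow0$ in $N_*(\mathbb{D}_1^\infty)$; in fact this is precisely the mechanism the paper itself deploys later in the proof of Theorem \ref{thm2112} (Cauchy in $N_*(\mathbb{D}_1^\infty)$, then pointwise identification via Lemma \ref{thm2105B}), so your argument in effect anticipates and absorbs that step.
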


\begin{proof}
We first show  that $\tilde{f}$ is holomorphic on $\mathbb{D}_1^\infty$. Choose a sequence of polynomials $\{q_n\}_{n=1}^\infty$ such that $\|q_n-f\|_0\rightarrow0$ as $n\rightarrow\infty$. By Lemma \ref{thm2109}, $\{q_n\}_{n=1}^\infty$ converges uniformly to $\tilde{f}$ on each $V_{r,M}$, and thus $\tilde{f}$ is holomorphic on $\mathbb{D}_1^\infty$. It remains to show $\{\log(1+|\tilde{f}_{[r]}|)\}_{0<r<1}$ forms a uniformly integrable family. Since $\|q_n-f\|_0\rightarrow0$ as $n\rightarrow\infty$, applying Lebesgue-Vitali's theorem \cite[Theorem 4.5.4]{Bog} shows that $\{\log(1+|q_n|)\}_{n=1}^\infty$ is uniformly integrable. Therefore, there exists a strongly convex function $\varphi$ such that
$$\sup_{n\in\mathbb{N}}\int_{\mathbb{T}^\infty}\varphi(\log(1+|q_n|))dm_\infty<\infty.$$
By Fatou's lemma, we have
\begin{equation}\label{equ2201}
\int_{\mathbb{T}^\infty}\varphi(\log(1+|\tilde{f}_{[r]}|))dm_\infty\leq
\liminf_{n\rightarrow\infty}\int_{\mathbb{T}^\infty}\varphi(\log(1+|(q_n)_{[r]}|))dm_\infty.
\end{equation}
For every $n\in\mathbb{N}$, since $\varphi(\log(1+|q_n|))$ is continuous on $\overline{\mathbb{D}}^\infty$, and $\infty$-subharmonic on $\mathbb{D}_1^\infty$, applying Corollary \ref{thm2101B} gives that for every $0<r<1$,
$$\int_{\mathbb{T}^\infty}\varphi(\log(1+|(q_n)_{[r]}|))dm_\infty\leq\int_{\mathbb{T}^\infty}\varphi(\log(1+|q_n|))dm_\infty.$$
Taking this inequality back into (\ref{equ2201}) yields
$$\sup_{0<r<1}\int_{\mathbb{T}^\infty}\varphi(\log(1+|\tilde{f}_{[r]}|))dm_\infty\leq
\sup_{n\in\mathbb{N}}\int_{\mathbb{T}^\infty}\varphi(\log(1+|q_n|))dm_\infty<\infty,$$
which ensures the uniform integrability of $\{\log(1+|\tilde{f}_{[r]}|)\}_{0<r<1}$, as desired.
\end{proof}

Recall that for every $F\in N_{*}(\mathbb{D}_1^\infty)$, $F$'s ``boundary-valued function"
$$F^{*}(w)=\lim_{r\rightarrow1}F_{[r]}(w)$$
exists for almost every $w\in\mathbb{T}^{\infty}$. In fact, this ``boundary-valued function" belongs to the Smirnov class $N_{*}(\mathbb{T}^\infty)$ over the infinite torus.

\begin{prop}\label{thm2111}
If $F\in N_{*}(\mathbb{D}_1^\infty)$, then $F^{*}\in N_{*}(\mathbb{T}^\infty)$ and $\|F\|_0=\|F^{*}\|_0$.
\end{prop}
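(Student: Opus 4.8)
The plan is to establish the two assertions separately, with the norm identity being essentially immediate and the membership $F^*\in N_*(\mathbb{T}^\infty)$ requiring only a short approximation argument. (The case $F=0$ is trivial, so I would assume $F\neq0$.) For the norm identity, recall that by the definition of the metric on $N(\mathbb{D}_{1}^\infty)$ one has $\|F\|_0=\sup_{0<r<1}\int_{\mathbb{T}^\infty}\log(1+|F_{[r]}|)dm_\infty=\lim_{r\rightarrow1}\|F_{[r]}\|_0$, where on the right $\|F_{[r]}\|_0$ denotes the $L^0(\mathbb{T}^\infty)$-norm of the boundary restriction of $F_{[r]}$. Since $F\in N_*(\mathbb{D}_1^\infty)$, statement (2) of Proposition \ref{thm2103} gives $\|F^*\|_0=\lim_{r\rightarrow1}\|F_{[r]}\|_0$, and combining these two equalities yields $\|F\|_0=\|F^*\|_0$ at once.

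It remains to prove $F^*\in N_*(\mathbb{T}^\infty)$. The strategy is to realize $F^*$ as the $L^0$-limit of functions already known to lie in $N_*(\mathbb{T}^\infty)$, and then to invoke the fact that $N_*(\mathbb{T}^\infty)$, being by definition the closure of $\mathcal{P}_\infty$ in $L^0(\mathbb{T}^\infty)$, is closed. First I would observe that for each fixed $0<r<1$ the dilate $F_{[r]}$, regarded as a function on $\mathbb{T}^\infty$, already belongs to $N_*(\mathbb{T}^\infty)$. Indeed, as noted in the discussion preceding Proposition \ref{thmB2101}, $F_{[r]}\in A(\mathbb{D}^\infty)$, which under restriction is identified with $A(\mathbb{T}^\infty)$, the uniform closure of $\mathcal{P}_\infty$ in $C(\mathbb{T}^\infty)$. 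Using the elementary bound $\log(1+x)\leq x$ for $x\geq0$, uniform convergence on $\mathbb{T}^\infty$ forces convergence in the metric $\|\cdot\|_0$, so that $A(\mathbb{T}^\infty)\subset N_*(\mathbb{T}^\infty)$; in particular each $F_{[r]}\in N_*(\mathbb{T}^\infty)$.

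Next I would appeal to statement (3) of Proposition \ref{thm2103}: since $F\in N_*(\mathbb{D}_1^\infty)$, one has $\lim_{r\rightarrow1}\|F_{[r]}-F^*\|_0=0$. Thus $F^*$ is the $L^0$-limit of the family $\{F_{[r]}\}_{0<r<1}\subset N_*(\mathbb{T}^\infty)$, and the closedness of $N_*(\mathbb{T}^\infty)$ yields $F^*\in N_*(\mathbb{T}^\infty)$, completing the proof. The whole argument is a clean assembly of earlier results, so I do not anticipate a serious obstacle; the only points demanding a little care are the two facts feeding into the approximation, namely that each $F_{[r]}$ restricts to an element of $A(\mathbb{T}^\infty)\subset N_*(\mathbb{T}^\infty)$, and that Proposition \ref{thm2103} is precisely what converts the uniform-integrability definition of $N_*(\mathbb{D}_1^\infty)$ into the $L^0$-convergence $F_{[r]}\to F^*$ that closes the argument.
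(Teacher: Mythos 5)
Your proposal is correct and follows essentially the same route as the paper: the norm identity is read off from Proposition \ref{thm2103}(2) together with the definition of $\|F\|_0$, and the membership $F^{*}\in N_{*}(\mathbb{T}^\infty)$ is obtained from $F_{[r]}\in A(\mathbb{T}^\infty)$ plus $\|F_{[r]}-F^{*}\|_0\rightarrow0$ from Proposition \ref{thm2103}(3). The only cosmetic difference is that you phrase the final step via the closedness of $N_{*}(\mathbb{T}^\infty)$ in $L^0(\mathbb{T}^\infty)$, while the paper unwinds the same fact into an explicit $\varepsilon$-triangle-inequality with a polynomial approximant of $F_{[r]}$.
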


\begin{proof}
The equality $\|F\|_0=\|F^{*}\|_0$ immediately follows from Proposition \ref{thm2103}. Given $\varepsilon>0$, again by Proposition \ref{thm2103}, there exists $0<r<1$ such that $\|F_{[r]}-F^{*}\|_0<\varepsilon$. Since $F_{[r]}\in A(\mathbb{T}^\infty)$, we can find a polynomial $q\in\mathcal{P}_\infty$ satisfying $\|q-F_{[r]}\|_\infty<e^{\varepsilon}-1$, and hence $\|q-F_{[r]}\|_0<\varepsilon$. Therefore,
$$\|q-F^{*}\|_0\leq\|q-F_{[r]}\|_0+\|F_{[r]}-F^{*}\|_0<2\varepsilon,$$
which implies $F^{*}\in N_{*}(\mathbb{T}^\infty)$.
\end{proof}

By Proposition \ref{thm2111}, the algebra homomorphism defined by
$$\Lambda: N_{*}(\mathbb{D}_1^\infty)\rightarrow N_{*}(\mathbb{T}^\infty),\quad F\mapsto F^{*},$$
is a linear isometry. The main theorem of this subsection is stated as follows.

\begin{thm}[Generalized Fatou's theorem]\label{thm2112}
The map $\Lambda: N_{*}(\mathbb{D}_1^\infty)\rightarrow N_{*}(\mathbb{T}^\infty)$ is an isometric algebra isomorphism, and its inverse map $\Lambda^{-1}: N_{*}(\mathbb{T}^\infty)\rightarrow N_{*}(\mathbb{D}_1^\infty)$ is given by $f\mapsto\tilde{f}$.
\end{thm}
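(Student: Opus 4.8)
The plan is to build on what is already in hand just before the statement: by Proposition~\ref{thm2111} the map $\Lambda\colon F\mapsto F^{*}$ is a linear isometry, and it is product-preserving (so an algebra homomorphism). In particular $\Lambda$ is injective, so the whole theorem reduces to two tasks: proving that $\Lambda$ is surjective, and checking that the resulting inverse is exactly $f\mapsto\tilde f$. The organizing observation is that every polynomial $q\in\mathcal{P}_\infty$ already sits in $N_{*}(\mathbb{D}_1^\infty)$, with $\tilde q=q$ on $\mathbb{D}_1^\infty$ and boundary value $\Lambda q=q^{*}=q$; thus on the dense subspace $\mathcal{P}_\infty$ the two norms coincide and $\Lambda$ acts as the identity. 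This is the bridge through which the two spaces will be matched.

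For surjectivity I would fix $f\in N_{*}(\mathbb{T}^\infty)$ and choose polynomials $q_n\to f$ in $N_{*}(\mathbb{T}^\infty)$. Since $\Lambda$ is an isometry and $\Lambda q_n=q_n$, the identity $\|q_n-q_m\|_0=\|\Lambda q_n-\Lambda q_m\|_0$ transports the Cauchy property of $\{q_n\}$ from $N_{*}(\mathbb{T}^\infty)$ to $N_{*}(\mathbb{D}_1^\infty)$, where the $q_n$ are viewed as elements of the latter. By the completeness of $N(\mathbb{D}_1^\infty)$ (Proposition~\ref{thm2105}) together with the fact, established above, that $N_{*}(\mathbb{D}_1^\infty)$ is a closed subclass, the sequence $\{q_n\}$ converges to some $G\in N_{*}(\mathbb{D}_1^\infty)$. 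Continuity of $\Lambda$ then gives $\Lambda G=\lim_n\Lambda q_n=\lim_n q_n=f$. Hence $\Lambda$ is onto, and being an injective surjective algebra homomorphism that is norm-preserving, it is an isometric algebra isomorphism. The appeal here is that transporting the Cauchy sequence through the isometry lets completeness of $N_{*}(\mathbb{D}_1^\infty)$ manufacture the preimage, so no direct estimate of $\|\tilde f\|_0$ is required.

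It then remains to identify $\Lambda^{-1}f$ with $\tilde f$, i.e.\ to show that the limit $G$ above equals $\tilde f$, which I would do by comparing two forms of pointwise convergence of the very same sequence $q_n$. On one side, convergence $q_n\to G$ in the metric $d_0$ forces $q_n(\zeta)\to G(\zeta)$ for every $\zeta\in\mathbb{D}_1^\infty$, because the evaluation functional is continuous by Lemma~\ref{thm2105B}. On the other side, Lemma~\ref{thm2109} applied to $q_n\to f$ yields $\tilde q_n=q_n\to\tilde f$ uniformly on each $V_{r,M}$, hence pointwise on $\mathbb{D}_1^\infty$. Uniqueness of limits gives $G(\zeta)=\tilde f(\zeta)$ for all $\zeta$, so $G=\tilde f$ and therefore $\Lambda^{-1}f=\tilde f$, as claimed.

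I do not expect a serious obstacle; the work is bookkeeping at two junctures. First, one must be sure the $N_{*}(\mathbb{D}_1^\infty)$- and $N_{*}(\mathbb{T}^\infty)$-norms of a polynomial agree, which is precisely $\|q\|_0=\|q^{*}\|_0$ from Proposition~\ref{thm2111} (equivalently Corollary~\ref{thm2101B}), so that the isometry genuinely transfers Cauchyness. Second, the multiplicativity $\Lambda(FG)=\Lambda F\,\Lambda G$, i.e.\ $(FG)^{*}=F^{*}G^{*}$ almost everywhere, should be recorded; it follows from $(FG)_{[r]}=F_{[r]}G_{[r]}$ and the almost-everywhere existence of radial limits for $F$, $G$, and $FG$ guaranteed by Theorem~\ref{thm2102}. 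Since a bijective algebra homomorphism automatically has a multiplicative inverse, these checks complete the identification of $\Lambda$ as an isometric algebra isomorphism with inverse $f\mapsto\tilde f$.
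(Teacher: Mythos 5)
Your proposal is correct and takes essentially the same route as the paper: the paper's proof likewise approximates $f$ by polynomials, transfers the Cauchy property to $N_{*}(\mathbb{D}_1^\infty)$ via the agreement of the two metrics on $\mathcal{P}_\infty$, obtains a limit $G$ by completeness, and identifies $G=\tilde f$ by comparing the pointwise convergence furnished by Lemma \ref{thm2105B} with that furnished by Lemma \ref{thm2109}. Your slightly different packaging (surjectivity first, then identification of the inverse, rather than proving $\Lambda\tilde f=f$ directly) is immaterial, and your explicit remarks on multiplicativity and on $\|q\|_0=\|q^{*}\|_0$ for polynomials only make explicit what the paper leaves implicit.
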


\begin{proof}
It suffices to show that for every $f\in N_{*}(\mathbb{T}^\infty)$, $\Lambda\tilde{f}=f$. Choosing a sequence of polynomials $\{q_n\}_{n=1}^\infty$ satisfying
\begin{equation}\label{equ2202}
\|q_n-f\|_0\rightarrow0\quad(n\rightarrow\infty)
\end{equation}
and applying Lemma \ref{thm2109} lead that $\{q_n\}_{n=1}^\infty$ converges to $\tilde{f}$ pointwise on $\mathbb{D}_{1}^\infty$. On the other hand, (\ref{equ2202}) also implies that $\{q_n\}_{n=1}^\infty$ is a Cauchy sequence in $N_{*}(\mathbb{D}_1^\infty)$, and hence $\|q_n-G\|_0\rightarrow0$ for some $G\in N_{*}(\mathbb{D}_1^\infty)$ as $n\rightarrow\infty$. By using Lemma \ref{thm2105B}, $\{q_n\}_{n=1}^\infty$ converges to $G$ pointwise on $\mathbb{D}_{1}^\infty$, therefore $G=\tilde{f}$. We conclude from Proposition \ref{thm2111} that
$$\|q_n-\Lambda\tilde{f}\|_0=\|q_n-\Lambda G\|_0=\|q_n-G\|_0\rightarrow0\quad(n\rightarrow\infty).$$
From this and (\ref{equ2202}), $\Lambda\tilde{f}=f$ , which yields the desired conclusion.
\end{proof}

For $0<p<\infty$, every function in $H^{p}(\mathbb{T}^\infty)$, by evaluation functional, can be extended to a holomorphic function on $\mathbb{D}_{2}^\infty$. Comparison of this fact and Theorem \ref{thm2112} leads us to ask the following question.

\begin{Qes}
Does there exist a function in $N_{*}(\mathbb{T}^\infty)$ which can not be extended to a function holomorphic on $\mathbb{D}_2^\infty$?
\end{Qes}

As an application of the generalized Fatou's theorem,
we obtain Corollary \ref{thm2113},
which will be used in Section 3.

\begin{cor}\label{thm2113}
If $0<p\leq\infty$, then $N_{*}(\mathbb{T}^\infty)\cap L^p(\mathbb{T}^\infty)=H^p(\mathbb{T}^\infty)$.
\end{cor}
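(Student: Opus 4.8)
The plan is to prove the two inclusions separately, the content lying almost entirely in $N_{*}(\mathbb{T}^\infty)\cap L^p(\mathbb{T}^\infty)\subseteq H^p(\mathbb{T}^\infty)$. For the inclusion $H^p(\mathbb{T}^\infty)\subseteq N_{*}(\mathbb{T}^\infty)\cap L^p(\mathbb{T}^\infty)$, the containment $H^p\subseteq L^p$ is definitional, and for $0<p<\infty$ the containment $H^p(\mathbb{T}^\infty)\subseteq N_{*}(\mathbb{T}^\infty)$ was already recorded via (\ref{equ2200}). For $p=\infty$ I would first check $H^\infty(\mathbb{T}^\infty)\subseteq H^q(\mathbb{T}^\infty)$ for any fixed finite $q$: if $f\in H^\infty(\mathbb{T}^\infty)$, its Poisson extension lies in $H^\infty(\mathbb{D}_2^\infty)\subseteq H^q(\mathbb{D}_2^\infty)$, so $f_{[r]}\to f$ a.e.\ with $\|f_{[r]}\|_\infty\le\|f\|_\infty$, whence $f_{[r]}\to f$ in $L^q$ by dominated convergence; since each $f_{[r]}\in A(\mathbb{T}^\infty)\subseteq H^q(\mathbb{T}^\infty)$, this gives $f\in H^q\subseteq N_{*}(\mathbb{T}^\infty)$. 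This settles the easy inclusion for all $0<p\le\infty$.

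For the main inclusion, take a nonzero $f\in N_{*}(\mathbb{T}^\infty)\cap L^p(\mathbb{T}^\infty)$ (the zero function being trivial). By Proposition \ref{thm2110} its extension $\tilde f$ lies in $N_{*}(\mathbb{D}_1^\infty)$, and Theorem \ref{thm2112} gives $(\tilde f)^{*}=f$. Each slice $f_{[r]}=\tilde f_{[r]}$ then belongs to $A(\mathbb{T}^\infty)\subseteq H^p(\mathbb{T}^\infty)$, and Theorem \ref{thm2102} yields $f_{[r]}\to f$ almost everywhere on $\mathbb{T}^\infty$. The crux is therefore the norm estimate $\|f_{[r]}\|_p\le\|f\|_p$, after which a convergence argument closes everything.

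To obtain this estimate for $0<p<\infty$, I would start from the harmonic-majorant inequality of Proposition \ref{thm2108}, namely
$$\log|\tilde f(r\star w)|\le\int_{\mathbb{T}^\infty}\mathbf{P}_{r\star w}\log|f|\,dm_\infty.$$
Multiplying by $p$, exponentiating, and applying Jensen's inequality to the probability measure $\mathbf{P}_{r\star w}\,dm_\infty$ (this is exactly where the hypothesis $f\in L^p$ is used, since $|f|^p$ must be integrable) gives
$$|f_{[r]}(w)|^p\le\int_{\mathbb{T}^\infty}\mathbf{P}_{r\star w}(\xi)\,|f(\xi)|^p\,dm_\infty(\xi).$$
Integrating in $w$, interchanging integrals by Tonelli, and invoking the averaging identity $\int_{\mathbb{T}^\infty}\mathbf{P}_{r\star w}(\xi)\,dm_\infty(w)=1$ used in (\ref{equ2111A}) then produces $\|f_{[r]}\|_p^p\le\|f\|_p^p$. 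For $p=\infty$ the same majorant inequality, combined with $\log|f|\le\log\|f\|_\infty$, instead yields $|\tilde f(\zeta)|\le\|f\|_\infty$ and hence $\|f_{[r]}\|_\infty\le\|f\|_\infty$ directly.

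Finally I combine the pieces. For $0<p<\infty$, since $|\tilde f|^p$ is $\infty$-subharmonic, Proposition \ref{thm2101} shows $\|f_{[r]}\|_p^p$ increases in $r$ to a limit $L\le\|f\|_p^p$ by the estimate above, while Fatou's lemma gives $\|f\|_p^p\le L$; thus $\|f_{[r]}\|_p\to\|f\|_p$, and Lemma \ref{thm2104} upgrades the a.e.\ convergence to $\|f_{[r]}-f\|_p\to0$. As $H^p(\mathbb{T}^\infty)$ is closed in $L^p(\mathbb{T}^\infty)$ and contains each $f_{[r]}$, we conclude $f\in H^p(\mathbb{T}^\infty)$. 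For $p=\infty$, the uniformly bounded family $f_{[r]}$ converging a.e.\ to $f$ converges weak$^{*}$ in $L^\infty(\mathbb{T}^\infty)$ (by dominated convergence against any $g\in L^1(\mathbb{T}^\infty)$), and since $H^\infty(\mathbb{T}^\infty)$ is weak$^{*}$-closed and contains each $f_{[r]}\in A(\mathbb{T}^\infty)$, again $f\in H^\infty(\mathbb{T}^\infty)$. I expect the main obstacle to be precisely the norm inequality $\|f_{[r]}\|_p\le\|f\|_p$, as it is the only step that genuinely exploits $f\in L^p$ and requires assembling the Poisson majorant, Jensen's inequality, and the Fubini--Tonelli averaging identity; the remaining passages from a.e.\ convergence to norm or weak$^{*}$ convergence are routine.
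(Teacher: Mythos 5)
Your proof is correct, and for $0<p<\infty$ it is essentially identical to the paper's: both pass to $\tilde f\in N_{*}(\mathbb{D}_1^\infty)$ with $(\tilde f)^{*}=f$, apply the majorant inequality of Proposition \ref{thm2108}, exponentiate with $\varphi(t)=\exp(pt)$ and run the Jensen--Fubini averaging exactly as in (\ref{equ2111A}) to get $\limsup_{r\to1}\|f_{[r]}\|_p\leq\|f\|_p$, then use Fatou's lemma and Lemma \ref{thm2104} to upgrade a.e.\ convergence to $L^p$ convergence, and conclude from $f_{[r]}\in A(\mathbb{T}^\infty)$ and the closedness of $H^p(\mathbb{T}^\infty)$. (Your side remark that Jensen's inequality ``requires'' $|f|^p$ integrable is slightly off --- the inequality holds with $+\infty$ allowed on the right, and $f\in L^p$ is only needed to make the final bound finite --- but this does not affect anything.)

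The genuine divergence is at $p=\infty$. The paper deduces $|\tilde f(\zeta)|\leq\|f\|_\infty$ on $\mathbb{D}_1^\infty$ and then invokes the extension argument of Hedenmalm--Lindqvist--Seip to produce a bounded holomorphic extension of $\tilde f$ to all of $\mathbb{D}_2^\infty$, landing in $H^\infty(\mathbb{D}_2^\infty)$ and transferring back to $H^\infty(\mathbb{T}^\infty)$ via the known isometric isomorphism. You instead stay entirely on the torus: from the same bound you get $\|f_{[r]}\|_\infty\leq\|f\|_\infty$, note $f_{[r]}\to f$ a.e., conclude weak$^{*}$ convergence in $L^\infty(\mathbb{T}^\infty)$ by dominated convergence against $L^1$ test functions, and finish because $H^\infty(\mathbb{T}^\infty)$ is by definition weak$^{*}$-closed and contains each $f_{[r]}\in A(\mathbb{T}^\infty)$. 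Your route is more self-contained --- it avoids the external citation and uses only the definition of $H^\infty(\mathbb{T}^\infty)$ as a weak$^{*}$ closure --- while the paper's route buys the additional structural fact that $\tilde f$ extends to a bounded holomorphic function on the larger domain $\mathbb{D}_2^\infty$, which is of independent interest in the $H^\infty(\mathbb{D}_2^\infty)$ picture. Your treatment of the easy inclusion at $p=\infty$ (through $H^\infty\subset H^q$) is more elaborate than the paper's one-word ``Clearly,'' and glosses over the routine compatibility $(\tilde f)^{*}=f$ between the $H^\infty$ and $H^q$ identifications, but this is harmless at the level of the background facts stated in the introduction.
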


\begin{proof}
Clearly, $H^p(\mathbb{T}^\infty)\subset N_{*}(\mathbb{T}^\infty)\cap L^p(\mathbb{T}^\infty)$.
Conversely, for every $f\in N_{*}(\mathbb{T}^\infty)\cap L^p(\mathbb{T}^\infty)$, we will show that $f\in H^p(\mathbb{T}^\infty)$.
Write $F=\Lambda^{-1}f$, then $F\in N_{*}(\mathbb{D}_{1}^\infty)$, and $F^{*}=f$.
By Proposition \ref{thm2108}, for each $\zeta\in\mathbb{D}_{1}^\infty$,
\begin{equation}\label{equ2203}
\log|F(\zeta)|\leq\int_{\mathbb{T}^\infty}\mathbf{P}_\zeta\log|F^{*}|dm_\infty.
\end{equation}

When $0<p<\infty$, by (\ref{equ2203}), taking $\varphi(t)=\exp(pt)$ and applying the reasoning as in (\ref{equ2111A}) show that
$$\lim_{r\rightarrow1}\int_{\mathbb{T}^\infty}|F_{[r]}|^p dm_\infty\leq\int_{\mathbb{T}^\infty}|F^{*}|^p dm_\infty.$$
On the other hand, we infer from Fatou's lemma that
$$\int_{\mathbb{T}^\infty}|F^{*}|^p dm_\infty\leq\lim_{r\rightarrow1}\int_{\mathbb{T}^\infty}|F_{[r]}|^p dm_\infty,$$
and hence $\|F^{*}\|_p=\lim_{r\rightarrow1}\|F_{[r]}\|_p$.
Then by Lemma \ref{thm2104}, $\|F_{[r]}-F^{*}\|_{p}\rightarrow0$ as $r\rightarrow1$.
Noticing that for $0<r<1$, $F_{[r]}\in A(\mathbb{T}^\infty)$, we see $f=F^{*}\in H^p(\mathbb{T}^\infty)$.

When $p=\infty$, by (\ref{equ2203}) and Jensen's inequality, for every $\zeta\in\mathbb{D}_{1}^\infty$,
$$|F(\zeta)|\leq\exp\left(\int_{\mathbb{T}^\infty}\mathbf{P}_\zeta\log|F^{*}|dm_\infty\right)\leq
\int_{\mathbb{T}^\infty}\mathbf{P}_\zeta|F^{*}|dm_\infty\leq\|F^{*}\|_\infty<\infty,$$
which implies that $F$ is a bounded holomorphic function on $\mathbb{D}_1^\infty$.
By the argument in \cite[pp.7-8]{HLS}, $F$ can be extended to a bounded holomorphic function on $\mathbb{D}_{2}^\infty$, still denoted by $F$.
Then $F\in H^\infty(\mathbb{D}_{2}^\infty)$, and thus $f=F^{*}\in H^\infty(\mathbb{T}^\infty)$.
\end{proof}

\section{Szeg\"{o}'s problem in infinitely many variables}

In this section, we will apply function theory of the Smirnov class to discuss Szeg\"{o}'s problem in infinitely many variables.
Write $\mathcal{P}_0$ for the set of polynomials $q\in\mathcal{P}_\infty$ for which $q(0)=0$.
In what follows we assume that $1<p<\infty$, and $K\in L^1(\mathbb{T}^\infty)$ is a nonnegative function with $\log K\in L^1(\mathbb{T}^\infty)$.
Without loss of generality, suppose that $\|K\|_1=1$.
Motivated by Szeg\"{o}'s theorem mentioned in Introduction, one is naturally concerned with the following quantity:
$$S(K)=\inf_{q\in\mathcal{P}_0}\int_{\mathbb{T}^\infty} |1-q|^p K dm_{\infty}.$$
Obviously, $S(K)\leq1$.
On the other hand, it follows from Jensen's inequality and (\ref{equB21011}) that for every $q\in \mathcal{P}_0$,
$$\begin{aligned}
\int_{\mathbb{T}^\infty}|1-q|^pK dm_{\infty}&\geq\exp\left(\int_{\mathbb{T}^\infty}\log|1-q|^pdm_{\infty}+\int_{\mathbb{T}^\infty}\log K
dm_{\infty}\right)\\
&\geq\exp\left(\log|1-q(0)|^p+\int_{\mathbb{T}^\infty}\log K
dm_{\infty}\right)\\
&=\exp\left(\int_{\mathbb{T}^\infty}\log K dm_{\infty}\right).
\end{aligned}$$
Therefore, $S(K)$ falls into the closed interval $[\exp\left(\int_{\mathbb{T}^\infty}\log K dm_{\infty}\right), 1]$.
Then it is natural to ask for which $K$, $S(K)$ attains the lower bound $\exp\left( \int_{\mathbb{T}^\infty} \log K dm_{\infty}\right)$ or the upper bound $1$, and this is called Szeg\"{o}'s problem.

We first study when $S(K)$ attains the lower bound $\exp\left(\int_{\mathbb{T}^\infty}\log K dm_{\infty}\right)$.
The next theorem gives the answer.

\begin{thm}\label{thm601}
The equality
\begin{equation}\label{equ601}
S(K)=\exp\left(\int_{\mathbb{T}^\infty}\log K dm_{\infty}\right)
\end{equation}
holds if and only if $K=|h|^p$ for some cyclic vector $h\in H^p(\mathbb{T}^\infty)$.
\end{thm}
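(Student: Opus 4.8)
The plan is to treat the two implications separately, using the fact (recorded just before the statement) that $S(K)\ge\exp(\int_{\mathbb{T}^\infty}\log K\,dm_\infty)$ always holds, so in each direction only the reverse estimate, or the reconstruction of $K$, is needed. Throughout I write $L=\exp(\int_{\mathbb{T}^\infty}\log K\,dm_\infty)$ and note that a cyclic vector is outer by Corollary \ref{thm505}; hence whenever $K=|h|^p$ with $h$ outer one has $\int_{\mathbb{T}^\infty}\log K\,dm_\infty=p\int_{\mathbb{T}^\infty}\log|h|\,dm_\infty=p\log|h(0)|$ by Proposition \ref{thm501}, so that $L=|h(0)|^p$.

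For the \emph{if} direction, assume $K=|h|^p$ with $h\in H^p(\mathbb{T}^\infty)$ cyclic. By cyclicity there are polynomials $q_n$ with $\|q_nh-1\|_p\to0$; evaluating at $0$ (a continuous functional on $H^p(\mathbb{T}^\infty)$) gives $q_n(0)h(0)\to1$, so $q_n(0)\to1/h(0)\ne0$ since $h$ is outer and hence $h(0)\ne0$. Setting $g_n=q_n/q_n(0)$ for large $n$ yields polynomials with $g_n(0)=1$, i.e. $1-g_n\in\mathcal{P}_0$, and $g_nh=(q_n(0))^{-1}q_nh\to h(0)$ in $H^p(\mathbb{T}^\infty)$. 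Consequently $\int_{\mathbb{T}^\infty}|1-(1-g_n)|^pK\,dm_\infty=\|g_nh\|_p^p\to|h(0)|^p=L$, so $S(K)\le L$, and with the lower bound this gives equality.

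For the \emph{only if} direction, assume $S(K)=L$ and take a minimizing sequence: polynomials $g_n$ with $g_n(0)=1$ and $\int_{\mathbb{T}^\infty}|g_n|^pK\,dm_\infty\to L$. Since $1<p<\infty$, the weighted space $L^p(K\,dm_\infty)$ is uniformly convex, so this minimizing sequence for the distance from the origin to the closed convex set $\overline{\{g\in\mathcal{P}_\infty:g(0)=1\}}$ is Cauchy and converges to a limit $g_*$ with $\int_{\mathbb{T}^\infty}|g_*|^pK\,dm_\infty=L$. Comparing the two standing inequalities $\int|g_n|^pK\ge\exp(\int\log(|g_n|^pK))\ge L$ (the latter from $\int\log|g_n|\ge\log|g_n(0)|=0$ and $\int\log K=\log L$) shows the Jensen gap tends to $0$, whence $|g_n|^pK\to L$ in measure; since also $\int|g_n|^pK\to\int L$, Scheff\'e's lemma upgrades this to $L^1$-convergence, and combined with $g_n\to g_*$ in measure it forces $|g_*|^pK=L$ a.e., i.e. $K=L|g_*|^{-p}$. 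I then set $h:=L^{1/p}/g_*$, so that $|h|^p=K$ a.e.\ and, using $|g_*|^{-1}=(K/L)^{1/p}$, $\int_{\mathbb{T}^\infty}|g_nh-L^{1/p}|^p\,dm_\infty=\int_{\mathbb{T}^\infty}|g_n-g_*|^pK\,dm_\infty\to0$.

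The main work, and the step I expect to be the real obstacle, is to show that $h$ actually lies in $H^p(\mathbb{T}^\infty)$ and is cyclic, since convergence in $L^p(K\,dm_\infty)$ controls nothing where $K$ is small. The route I would take is to prove that $g_*\in N_*(\mathbb{T}^\infty)$ and is the boundary function of an outer element of $N_*(\mathbb{D}_1^\infty)$. First, from the $L^1$-convergence $|g_n|^pK\to L$ one obtains uniform integrability of $\{|g_n|^pK\}$, and, writing $\log^+|g_n|=\frac1p(\log(|g_n|^pK)-\log K)^+$ and using that $m_\infty(\{|g_n|>t\})\to0$ uniformly in $n$ (Chebyshev, since $\int\log^+|g_n|$ is bounded by Jensen applied to $\log(1+\cdot)$ together with $\int\log(1+K^{-1})\,dm_\infty<\infty$), one shows $\{\log^+|g_n|\}_n$ is uniformly integrable. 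Since $g_n\to g_*$ in measure, Vitali's theorem then yields $\|g_n-g_*\|_0\to0$, so $g_*\in N_*(\mathbb{T}^\infty)$ with $g_*(0)=1$ by continuity of $E_0$. Because $\int_{\mathbb{T}^\infty}\log|g_*|\,dm_\infty=\frac1p(\log L-\int\log K)=0=\log|g_*(0)|$, Proposition \ref{thm501} shows $\Lambda^{-1}g_*$ is outer; hence it is zero-free (Corollary \ref{thm502}) and $1/\Lambda^{-1}g_*$ is outer (Corollary \ref{thm503B}), giving $h=L^{1/p}/g_*\in N_*(\mathbb{T}^\infty)$. As $|h|^p=K\in L^1$, Corollary \ref{thm2113} places $h$ in $H^p(\mathbb{T}^\infty)$. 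Finally $g_nh\to L^{1/p}$ in $L^p(\mathbb{T}^\infty)$ with each $g_nh\in H^p(\mathbb{T}^\infty)$ shows the constant $1$ lies in the invariant subspace generated by $h$, and invariance together with the density of $\mathcal{P}_\infty$ forces this subspace to be all of $H^p(\mathbb{T}^\infty)$; thus $h$ is cyclic and $K=|h|^p$, completing the proof.
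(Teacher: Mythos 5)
Your proposal is correct, and its overall architecture coincides with the paper's: in the ``if'' direction both arguments use that cyclic vectors are outer (Corollary \ref{thm505}) and manufacture a near-optimal competitor from the approximation $q_nh\to1$ (the paper phrases this as $h-\tilde h(0)$ lying in the closure of $\{qh:q\in\mathcal{P}_0\}$, you phrase it by normalizing $g_n=q_n/q_n(0)$; these are the same computation); in the ``only if'' direction both extract a limit $g_*=1-\varphi$ of the minimizing sequence, show $|g_*|^pK$ is constant, deduce that $g_*$ is outer from the equality $\log|\tilde g_*(0)|=\int_{\mathbb{T}^\infty}\log|g_*|\,dm_\infty$ (Proposition \ref{thm501}), invert it by Corollary \ref{thm503B}, land in $H^p(\mathbb{T}^\infty)$ through Corollary \ref{thm2113}, and finish with the identical computation $\int_{\mathbb{T}^\infty}|(1-q_n)h-C|^p\,dm_\infty=\int_{\mathbb{T}^\infty}|q_n-\varphi|^pK\,dm_\infty\to0$ to get cyclicity. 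The differences are technical but worth recording. Where the paper invokes reflexivity of $H^p(K\,dm_\infty)$ to obtain an exact minimizer and then applies the equality case of Jensen's inequality, you use uniform convexity to make the minimizing sequence converge strongly and replace the equality case by a quantitative argument: the Jensen gap tends to zero, strict concavity of $\log$ (via $t-1-\log t\ge0$) forces $|g_n|^pK\to L$ in measure, and Scheff\'e's lemma upgrades this to $L^1$-convergence --- this buys the constancy of $|g_*|^pK$ without ever producing an extremal identity (note you do use, implicitly, that $\log K\in L^1$ gives $K>0$ a.e.\ so that convergence in $L^p(K\,dm_\infty)$ yields convergence in $m_\infty$-measure; this is fine but should be said). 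Likewise, where the paper routes the membership $\varphi\in N_*(\mathbb{T}^\infty)$ and the convergence $\|q_{n_k}-\varphi\|_0\to0$ through Proposition \ref{thm602}, you prove the same convergence ad hoc by a Vitali argument; here the pointwise domination $\log^+|g_n|\le\frac1p\bigl(\log^+(|g_n|^pK)+\log^-K\bigr)$, combined with $\log^+x\le x$ and the uniform integrability of $\{|g_n|^pK\}$, already gives the uniform integrability of $\{\log^+|g_n|\}$ directly, so your Chebyshev detour is dispensable. The net effect is a self-contained proof of the hard direction that bypasses Proposition \ref{thm602} and the abstract extremal-function step, at the cost of heavier measure-theoretic bookkeeping; the paper's version is shorter given Proposition \ref{thm602} as a standing lemma, while yours has the mild advantage of yielding strong convergence of the minimizing sequence as a byproduct rather than an afterthought.
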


To prove Theorem \ref{thm601}, we need the following proposition which states that the weighted Hardy space $H^p(K dm_\infty)$,
the closure of $\mathcal{P}_\infty$ in $L^p(K dm_{\infty})$,
 is a subset of the Smirnov class $N_{*}(\mathbb{T}^\infty)$.

\begin{prop}\label{thm602}
$H^p(K dm_\infty)\subset N_*(\mathbb{T}^\infty)$.
\end{prop}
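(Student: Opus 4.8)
The plan is to show directly that the polynomials defining a function $f\in H^p(K\,dm_\infty)$ already converge to $f$ in the metric of $L^0(\mathbb{T}^\infty)$, so that $f$ lies in the $L^0$-closure of $\mathcal{P}_\infty$, which by definition is exactly $N_*(\mathbb{T}^\infty)$. One might first be tempted to factor $K=|h|^p$ for an outer $h$ and exploit the isometry $g\mapsto gh$ from $L^p(K\,dm_\infty)$ into $L^p(\mathbb{T}^\infty)$ together with $1/h\in N_*$; however, in infinitely many variables a general $K$ with $\log K\in L^1(\mathbb{T}^\infty)$ need not be the modulus power of any $H^p$ function (this is essentially the content of Szeg\"{o}'s problem, Theorem \ref{thm601}), so I would avoid this route and argue by a uniform integrability estimate instead.

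Concretely, I would fix $f\in H^p(K\,dm_\infty)$ and polynomials $q_n\in\mathcal{P}_\infty$ with $\int_{\mathbb{T}^\infty}|q_n-f|^pK\,dm_\infty\to0$, and first establish that $\{\log^{+}|q_n|\}_n$ is uniformly integrable with respect to $dm_\infty$. Two observations combine to give this. First, since $1<p<\infty$, convergence of $q_n$ to $f$ in $L^p(K\,dm_\infty)$ forces $|q_n|^p\to|f|^p$ in $L^1(K\,dm_\infty)$, so $\{|q_n|^pK\}_n$ is a convergent, hence uniformly integrable, sequence in $L^1(\mathbb{T}^\infty)$. Second, the elementary pointwise inequality $\log^{+}x\le\log^{+}(xK)+\log^{-}K\le xK+\log^{-}K$ (using $\log^{+}t\le t$), applied with $x=|q_n|^p$, yields $\log^{+}|q_n|^p\le|q_n|^pK+\log^{-}K$. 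Because $\log^{-}K\in L^1(\mathbb{T}^\infty)$ is a single integrable function and $\{|q_n|^pK\}_n$ is uniformly integrable, the family $\{\log^{+}|q_n|^p\}_n$, and therefore $\{\log^{+}|q_n|\}_n$, is uniformly integrable.

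The proof would then be completed through an almost-everywhere convergent subsequence. Since $\log K\in L^1(\mathbb{T}^\infty)$ gives $K>0$ almost everywhere and $K\,dm_\infty$ is a finite measure, passing to a subsequence provides $q_{n_k}\to f$ almost everywhere on $\mathbb{T}^\infty$. From $\log(1+|q_{n_k}-f|)\le 2\log2+\log^{+}|q_{n_k}|+\log^{+}|f|$, together with $\log^{+}|f|\in L^1(\mathbb{T}^\infty)$ (which follows from Fatou's lemma and the $L^1$-boundedness of the uniformly integrable family), the sequence $\{\log(1+|q_{n_k}-f|)\}_k$ is uniformly integrable and tends to $0$ almost everywhere; Lebesgue-Vitali's theorem \cite[Theorem 4.5.4]{Bog} (or equally Lemma \ref{thm2103B}) then gives $\|q_{n_k}-f\|_0=\int_{\mathbb{T}^\infty}\log(1+|q_{n_k}-f|)\,dm_\infty\to0$. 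Thus $f$ is an $L^0$-limit of polynomials, that is, $f\in N_*(\mathbb{T}^\infty)$.

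The main obstacle is precisely the first step: transferring control from the weighted norm $\|\cdot\|_{L^p(K\,dm_\infty)}$ to the unweighted logarithmic integral defining $L^0(\mathbb{T}^\infty)$. Weighted $L^p$-convergence does \emph{not} imply convergence in $dm_\infty$-measure, since $K$ may be arbitrarily small on sets of large $dm_\infty$-measure, so no naive continuity-of-embedding argument succeeds. The inequality $\log^{+}|q_n|^p\le|q_n|^pK+\log^{-}K$, which absorbs the unfavorable behavior of small $K$ into the fixed $L^1$ function $\log^{-}K$, is exactly what renders the weighted hypothesis usable; everything after it is routine uniform-integrability bookkeeping.
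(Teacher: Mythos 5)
Your proposal is correct, and while it shares the paper's overall skeleton (approximate by polynomials in $L^p(K\,dm_\infty)$, pass to an a.e.-convergent subsequence, absorb the bad behavior of small $K$ into the fixed integrable function coming from $\log K\in L^1(\mathbb{T}^\infty)$, conclude by a Vitali-type theorem), the middle of your argument is genuinely reorganized. The paper applies the generalized dominated convergence theorem (Lemma \ref{thm2103B}) directly to the \emph{differences}: it dominates $\log(1+|q_{n_k}-h|^p)$ by $\log(1+|q_{n_k}-h|^pK)+\log(1+K)-\log K$ (a multiplicative cousin of your $\log^{+}x\le\log^{+}(xK)+\log^{-}K$), and then removes the exponent $p$ by splitting $\mathbb{T}^\infty$ into the set $E_k$ where $|q_{n_k}-h|\le1$ and its complement, using $\log(1+t)\le\log(1+t^p)$ for $t\ge1$. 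You instead first establish uniform integrability of $\{\log^{+}|q_n|\}_n$, which requires the extra (true, standard) fact that $L^p(K\,dm_\infty)$-convergence forces $|q_n|^pK\to|f|^pK$ in $L^1(m_\infty)$, combined with your bound $\log^{+}|q_n|^p\le|q_n|^pK+\log^{-}K$; the conclusion then follows from Lebesgue--Vitali applied to $\log(1+|q_{n_k}-f|)\le2\log2+\log^{+}|q_{n_k}|+\log^{+}|f|$. What your route buys: the exponent disappears trivially via $\log^{+}|q_n|^p=p\log^{+}|q_n|$, so no set-splitting is needed, and the argument works verbatim for all $0<p<\infty$ (the paper's estimate on $E_k^c$ uses $p\ge1$); it also isolates a statement (uniform integrability of $\log^{+}$ of the approximants) that mirrors the very definition of the Smirnov class on the disk side. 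What the paper's route buys: it never needs convergence of $p$-th powers in $L^1(K\,dm_\infty)$, operating with one application of Lemma \ref{thm2103B} on the differences. Two small remarks: your parenthetical that Lemma \ref{thm2103B} applies ``equally'' at the last step is a slight overstatement, since invoking it would first require $\int_{\mathbb{T}^\infty}\log^{+}|q_{n_k}|\,dm_\infty\to\int_{\mathbb{T}^\infty}\log^{+}|f|\,dm_\infty$ for the dominating functions (itself a consequence of Vitali), but your primary citation of Lebesgue--Vitali is exactly what is needed, so no gap results; and your opening observation for why the outer-factorization shortcut must be avoided is well taken and consistent with Theorem \ref{thm601}.
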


\begin{proof}
For every $h\in H^p(K dm_\infty)$, there is a sequence of polynomials $\{q_n\}_{n=1}^\infty$ for which
\begin{equation}\label{equ602}
\int_{\mathbb{T}^\infty}|q_n-h|^pK dm_\infty\rightarrow0\quad(n\rightarrow\infty).
\end{equation}
Then there is a subsequence $\{q_{n_{k}}\}_{k=1}^\infty$ that converges to $h$ almost everywhere on $\mathbb{T}^\infty$.
Moreover, by the inequality $\log(1+x)\leq x\;(x\geq0)$ and (\ref{equ602}), we have
$$\int_{\mathbb{T}^\infty}\log(1+|q_{n_k}-h|^pK)dm_\infty\rightarrow0\quad(k\rightarrow\infty).$$
Write
$$f_{n_k}=\log(1+|q_{n_k}-h|^p),\quad g_{n_k}=\log(1+|q_{n_k}-h|^pK)+\log(1+K)-\log K.$$
For every $k\in\mathbb{N}$, it is easy to verify that $0\leq f_{n_k}\leq g_{n_k}$ on $\mathbb{T}^\infty$.
Applying Lemma \ref{thm2103B} to sequences $\{f_{n_k}\}_{k=1}^\infty$ and $\{g_{n_k}\}_{k=1}^\infty$, we obtain
\begin{equation}\label{equ602B}
\int_{\mathbb{T}^\infty}\log(1+|q_{n_k}-h|^p)dm_\infty\rightarrow0\quad(k\rightarrow\infty).
\end{equation}
Let $E_k$ be the set of points $w\in\mathbb{T}^\infty$ such that $|q_{n_k}(w)-h(w)|\leq1$, and $E_k^c$ the complement of $E_k$ with respect to $\mathbb{T}^\infty$.
Write $\chi_{E_k}$ for the characteristic function of $E_k$.
Then for every $k\in\mathbb{N}$, $0\leq\chi_{E_k}|q_{n_k}-h|\leq1$ on $\mathbb{T}^\infty$.
It follows from Lebesgue's dominated convergence theorem that
\begin{equation}\label{equ603}
\int_{E_k}\log(1+|q_{n_k}-h|)dm_\infty=\int_{\mathbb{T}^\infty}\log(1+\chi_{E_k}|q_{n_k}-h|)dm_\infty\rightarrow0\quad(k\rightarrow\infty).
\end{equation}
On the other hand, by inequalities
$$\int_{E_k^c}\log(1+|q_{n_k}-h|)dm_\infty\leq\int_{E_k^c}\log(1+|q_{n_k}-h|^p)dm_\infty\leq\int_{\mathbb{T}^\infty}\log(1+|q_{n_k}-h|^p)dm_\infty,$$
we conclude from (\ref{equ602B}) that
$$\int_{E_k^c}\log(1+|q_{n_k}-h|)dm_\infty\rightarrow0\quad(k\rightarrow\infty).$$
Combining this with (\ref{equ603}) gives $\|q_{n_k}-h\|_0\rightarrow0$ as $k\rightarrow\infty$, and thus $h\in N_*(\mathbb{T}^\infty)$, as desired.
\end{proof}

Recall that to every $f\in N_*(\mathbb{T}^\infty)$ corresponds a holomorphic function $\tilde{f}\in N_*(\mathbb{D}_1^\infty)$ whose radial limit is $f$.
We are now ready to prove Theorem \ref{thm601}.

\vskip2mm

\noindent\textbf{Proof of Theorem \ref{thm601}.}
We first assume $K=|h|^p$ for some cyclic vector $h\in H^p(\mathbb{T}^\infty)$. To prove (\ref{equ601}), it suffices to show that
\begin{equation}\label{equ604}
S(K)\leq\exp\left(\int_{\mathbb{T}^\infty}\log K dm_\infty\right).
\end{equation}
By the cyclicity of $h$, it is not difficult to verify that $h-\tilde{h}(0)$ belongs to the closure of
$\left\{qh\in H^p(\mathbb{T}^\infty): q\in \mathcal{P}_0\right\}$
in $H^p(\mathbb{T}^\infty)$.
Therefore,
\begin{equation}\label{equ605}
S(K)=\inf_{q\in \mathcal{P}_0}\int_{\mathbb{T}^\infty}|h-qh|^p dm_{\infty}\leq
\int_{\mathbb{T}^\infty}|h-(h-\tilde{h}(0))|^p dm_{\infty}=|\tilde{h}(0)|^p.
\end{equation}
Since $h\in H^p(\mathbb{T}^\infty)$ is outer, that is,
$$|\tilde{h}(0)|^p=\exp\left(\int_{\mathbb{T}^\infty}\log|h|^pdm_\infty\right)=\exp\left(\int_{\mathbb{T}^\infty}\log K dm_\infty\right),$$
we deduce from this and (\ref{equ605}) that $K$ satisfies (\ref{equ604}).

Conversely, we assume that (\ref{equ601}) holds.
Since $H^p(K dm_\infty)$ is reflexive, there exists a function $\varphi$ belonging to the closure of $\mathcal{P}_0$ in $H^p(K dm_\infty)$ such that
\begin{equation}\label{equ605B}
S(K)=\int_{\mathbb{T}^\infty}|1-\varphi|^pK dm_{\infty}.
\end{equation}
Choose a sequence of polynomials $\{q_n\}_{n=1}^\infty$ in $\mathcal{P}_0$ satisfying
\begin{equation}\label{equ605C}
\int_{\mathbb{T}^\infty}|q_n-\varphi|^pK dm_\infty\rightarrow0\quad(n\rightarrow\infty).
\end{equation}
It follows from Proposition \ref{thm602} that $\varphi\in N_*(\mathbb{T}^\infty)$, and the proof of this proposition also implies that there is a subsequence $\{q_{n_k}\}_{k=1}^{\infty}$ for which $\|q_{n_k}-\varphi\|_0\rightarrow0$ as $k\rightarrow\infty$.
Hence by Lemma \ref{thm2109}, $\{q_{n_k}\}_{k=1}^{\infty}$ converges to $\tilde{\varphi}$ pointwise on $\mathbb{D}_1^\infty$, and thus $\tilde{\varphi}(0)=0$.
We see from Proposition \ref{thm2108} that
\begin{equation}\label{equ606}
\int_{\mathbb{T}^\infty}\log |1-\varphi|^p dm_{\infty}\geq\log |1-\tilde{\varphi}(0)|^p=0.
\end{equation}
By Jensen's inequality,
\begin{equation}\label{equ607}
\int_{\mathbb{T}^\infty}|1-\varphi|^pK dm_{\infty}\geq\exp\left(\int_{\mathbb{T}^\infty}\log|1-\varphi|^p+\log K dm_\infty\right),
\end{equation}
and the equality holds if and only if $|1-\varphi|^p K$ is a constant. Combining (\ref{equ606}) with (\ref{equ607}) shows that
$$\int_{\mathbb{T}^\infty}|1-\varphi|^pK dm_{\infty}\geq\exp\left(\int_{\mathbb{T}^\infty}\log K dm_\infty\right).$$
By (\ref{equ601}) and (\ref{equ605B}), this inequality is actually an equality, forcing (\ref{equ606}) and (\ref{equ607}) to be equalities.
This means that $1-\varphi\in N_{*}(\mathbb{T}^\infty)$ is outer, and there is a constant $C>0$ such that $|1-\varphi|^p K=C^p$.

We claim that $\frac{1}{1-\varphi}\in H^{p}(\mathbb{T}^\infty)$.
Indeed, since $1-\varphi\in N_{*}(\mathbb{T}^\infty)$ is outer, by Corollary \ref{thm503B} and the generalized Fatou's theorem, $\frac{1}{1-\varphi}\in N_{*}(\mathbb{T}^\infty)$, and it is outer.
On the other hand, since $\frac{1}{|1-\varphi|^p}=\frac{K}{C^p}$, we have $\frac{1}{1-\varphi}\in L^p(\mathbb{T}^\infty)$.
Then by Corollary \ref{thm2113}, $\frac{1}{1-\varphi}\in H^{p}(\mathbb{T}^\infty)$, and the claim holds.

Write $h=\frac{C}{1-\varphi}\in H^{p}(\mathbb{T}^\infty)$, then $K=|h|^p$.
We proceed to prove that $h$ is cyclic by (\ref{equ605C}).
Noticing that
$$|q_n-\varphi|^pK=|(1-q_n)-(1-\varphi)|^p|h|^p=|(1-q_n)h-C|^p,$$
hence by (\ref{equ605C}),
$$\int_{\mathbb{T}^\infty}|(1-q_n)h-C|^pdm_\infty=\int_{\mathbb{T}^\infty}|q_n-\varphi|^pK dm_\infty\rightarrow0\quad(n\rightarrow\infty).$$
The above reasoning shows that the constant function $C$ is in the invariant subspace of $H^p(\mathbb{T}^\infty)$ generated by $h$.
Therefore, $h$ is cyclic, and the proof is complete. $\hfill\square$

\vskip2mm

We continue to discuss when $S(K)$ attains the upper bound $1$.
Let $\mathbb{Z}_{0}^{\infty}$ be the set of all finitely supported sequences of integers.
Recall that for every $\alpha\in\mathbb{Z}_{0}^{\infty}$, the Fourier coefficient $\hat{K}(\alpha)$ of $K$ is defined to be
$$\hat{K}(\alpha)=\int_{\mathbb{T}^\infty}K\bar{e}_\alpha dm_\infty,$$
where
$$e_\alpha(w)=w_1^{\alpha_1}w_2^{\alpha_2}\cdots,\quad w\in\mathbb{T}^\infty.$$

\begin{thm}\label{thm603}
$S(K)=1$ if and only if
\begin{equation}\label{equ608B}
\hat{K}(\alpha)=\hat{K}(-\alpha)=0,\quad 0\neq\alpha\in\mathbb{N}_{0}^{\infty},
\end{equation}
where
$\mathbb{N}_{0}^{\infty}$ denotes the set of all finitely supported sequences of nonnegative integers.
\end{thm}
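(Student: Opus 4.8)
The plan is to reduce the problem to a single convexity inequality for the map $z\mapsto|z|^p$ together with the elementary identity relating integrals of polynomials against $K$ to Fourier coefficients. Since $S(K)\le 1$ is already known, everything comes down to deciding, for the competitors $1-q$ with $q\in\mathcal{P}_0$, whether $\int_{\mathbb{T}^\infty}|1-q|^pK\,dm_\infty$ can dip below $\int_{\mathbb{T}^\infty}K\,dm_\infty=1$. Two facts drive the argument. First, because $z\mapsto|z|^p$ is convex with gradient $p$ (in the real direction) at $z=1$, one has the supporting-hyperplane inequality $|z|^p\ge 1+p\,\mathrm{Re}(z-1)$ for all $z\in\mathbb{C}$. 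Second, writing $q=\sum_{0\ne\alpha\in\mathbb{N}_0^\infty}c_\alpha e_\alpha$ (a finite analytic polynomial with $q(0)=0$), one has $\int_{\mathbb{T}^\infty}q\,K\,dm_\infty=\sum_\alpha c_\alpha\hat K(-\alpha)$, since $\int_{\mathbb{T}^\infty}e_\alpha K\,dm_\infty=\hat K(-\alpha)$; as $K$ is real, $\hat K(-\alpha)=\overline{\hat K(\alpha)}$, so condition (\ref{equ608B}) is equivalent to $\int_{\mathbb{T}^\infty}q\,K\,dm_\infty=0$ for every $q\in\mathcal{P}_0$.

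For sufficiency, I would assume (\ref{equ608B}). Given $q\in\mathcal{P}_0$, apply the supporting-hyperplane inequality pointwise with $z=(1-q)(w)$, so that $\mathrm{Re}(z-1)=-\mathrm{Re}\,q(w)$, giving $|1-q|^p\ge 1-p\,\mathrm{Re}\,q$ on $\mathbb{T}^\infty$. Integrating against $K$ and using $\int_{\mathbb{T}^\infty}q\,K\,dm_\infty=0$ yields $\int_{\mathbb{T}^\infty}|1-q|^pK\,dm_\infty\ge 1$. Since $q$ was arbitrary, $S(K)\ge 1$, and combined with $S(K)\le 1$ this gives $S(K)=1$.

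For necessity I would argue by contraposition: suppose (\ref{equ608B}) fails, so that $c:=\int_{\mathbb{T}^\infty}e_\alpha K\,dm_\infty=\hat K(-\alpha)\ne 0$ for some $0\ne\alpha\in\mathbb{N}_0^\infty$ (the case $\hat K(\alpha)\ne 0$ reduces to this by reality). Test with $q=t\,e_\alpha\in\mathcal{P}_0$ for small $t\in\mathbb{C}$. Because $|e_\alpha|\equiv 1$ on $\mathbb{T}^\infty$ and $z\mapsto|1-z|^p$ is smooth near $z=0$, there is a constant $C=C(p)$ with $|1-t e_\alpha(w)|^p\le 1-p\,\mathrm{Re}(t\,e_\alpha(w))+C|t|^2$ uniformly in $w$. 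Integrating against $K$ gives $\int_{\mathbb{T}^\infty}|1-t e_\alpha|^pK\,dm_\infty\le 1-p\,\mathrm{Re}(tc)+C|t|^2$; choosing $t=\varepsilon\,\overline{c}/|c|$ makes $\mathrm{Re}(tc)=\varepsilon|c|$, so the bound reads $1-p\varepsilon|c|+C\varepsilon^2<1$ for small $\varepsilon>0$. Hence $S(K)<1$, completing the contrapositive.

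The only delicate point is the quadratic remainder estimate in the necessity step: I would obtain it from a second-order Taylor expansion of the smooth function $z\mapsto|1-z|^p$ at $z=0$, valid uniformly because $t e_\alpha$ ranges over a fixed small disk (as $|e_\alpha|=1$), so that estimation under the integral sign against $K\in L^1(\mathbb{T}^\infty)$ is unproblematic. I note that, in contrast to Theorem \ref{thm601}, this argument is purely convex-analytic and Fourier-theoretic and uses none of the Nevanlinna--Smirnov machinery; the hypotheses $\log K\in L^1$ and $\|K\|_1=1$ enter only to fix the normalization $\int_{\mathbb{T}^\infty}K\,dm_\infty=1$ against which the two bounds are compared.
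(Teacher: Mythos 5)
Your proof is correct, and it takes a genuinely different route from the paper's. The paper argues by duality: it writes $S(K)=\|\pi(1)\|_{\mathcal{Q}}^p$ for the quotient $\mathcal{Q}=L^p(K dm_\infty)/H_0^p(Kdm_\infty)$, identifies $\mathcal{Q}^{*}$ inside $L^q(K dm_\infty)$ via Hahn--Banach (this is the formula (\ref{equ609})), and for necessity uses reflexivity and weak compactness to extract an extremal $\psi$ in the dual unit ball with $\left|\int_{\mathbb{T}^\infty}\psi K dm_\infty\right|=1$; the equality case of H\"{o}lder's inequality then forces $\psi=1$, and $1\in\mathcal{Q}^{*}$ yields (\ref{equ608B}). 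You instead run a first-variation, purely primal argument: for sufficiency, the supporting-hyperplane inequality $|z|^p\geq 1+p\,\mathrm{Re}(z-1)$ (valid on all of $\mathbb{C}$ since $|z|^p$ is convex and $C^1$ for $p>1$) integrated against $K$, combined with the correct observation that, by $\hat{K}(-\alpha)=\overline{\hat{K}(\alpha)}$ for real $K$, condition (\ref{equ608B}) is equivalent to $\int_{\mathbb{T}^\infty}qK dm_\infty=0$ for all $q\in\mathcal{P}_0$; for necessity, a second-order perturbation with $q=te_\alpha$, where the uniform quadratic remainder is legitimate because $|e_\alpha|\equiv1$ keeps $z=te_\alpha(w)$ in a fixed small disk on which $z\mapsto|1-z|^p$ is smooth with bounded second derivatives, so integration against $K\in L^1(\mathbb{T}^\infty)$ is harmless. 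Both directions are sound, including the reduction of $\hat{K}(\alpha)\neq0$ to $\hat{K}(-\alpha)\neq0$ by reality and the membership $te_\alpha\in\mathcal{P}_0$. As for what each approach buys: the paper's duality gives the general formula (\ref{equ609}) for $S(K)$ as a supremum over the dual ball, a reusable description valid for every $K$, and produces the extremizer $\psi=1$ explicitly; your argument is more elementary (no reflexivity, weak compactness, or Hahn--Banach) and quantitative, since the bound $S(K)\leq 1-p\varepsilon|c|+C\varepsilon^2$ with $c=\hat{K}(-\alpha)$ gives an explicit gap below $1$ when some coefficient is nonzero. Your closing remark is also accurate: neither your argument nor, in fact, the paper's proof of this particular theorem uses $\log K\in L^1(\mathbb{T}^\infty)$; that hypothesis is what makes the lower endpoint of the interval containing $S(K)$ nontrivial and is genuinely needed only in Theorem \ref{thm601}.
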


\begin{proof}
Let $H_0^p(Kdm_\infty)$ denote the closure of $\mathcal{P}_0$ in $L^p(K dm_\infty)$.
Applying Hahn-Banach theorem shows that the quotient space $\mathcal{Q}=L^p(K dm_\infty)/H_0^p(Kdm_\infty)$ is of the dual space $\mathcal{Q}^{*}$ as follows:
$$\mathcal{Q}^{*}=\left\{g\in L^q(K dm_\infty):\;\int_{\mathbb{T}^\infty}fgKdm_\infty=0\;\;\text{for}\;\;f\in H_0^p(Kdm_\infty)\right\},$$
where $\frac{1}{p}+\frac{1}{q}=1$.
Therefore,
\begin{equation}\label{equ609}
S(K)=\inf_{f\in H_0^p(Kdm_\infty)}\int_{\mathbb{T}^\infty}|1-f|^pK dm_{\infty}=\|\pi(1)\|_{\mathcal{Q}}^p
=\sup_{g\in \mathbf{B}\cap \mathcal{Q}^{*}}\left|\int_{\mathbb{T}^\infty}gK dm_\infty\right|^p,
\end{equation}
where $\pi$ is the quotient map of $L^p(K dm_\infty)$ onto $\mathcal{Q}$, and $\mathbf{B}$ is the closed unit ball of $L^q(K dm_\infty)$.

When (\ref{equ608B}) holds, it is easy to verify that $1\in\mathbf{B}\cap\mathcal{Q}^{*}$. Then by (\ref{equ609}),
$$S(K)=\sup_{g\in \mathbf{B}\cap\mathcal{Q}^{*}}\left|\int_{\mathbb{T}^\infty}gK dm_\infty\right|^p\geq\left|\int_{\mathbb{T}^\infty}K dm_\infty\right|^p=1,$$
forcing $S(K)=1$.

Conversely, we assume that $S(K)=1$.
Since $L^q(K dm_\infty)$ is reflexive, $\mathbf{B}\cap\mathcal{Q}^{*}$ is weakly compact. Hence by (\ref{equ609}) there exists a $\psi\in\mathbf{B}\cap\mathcal{Q}^{*}$ such that
$$1=S(K)=\left|\int_{\mathbb{T}^\infty}\psi K dm_\infty\right|^p.$$
Noticing that $\|\psi\|_{L^{q}(K dm_\infty)}\leq1$, H\"{o}lder's inequality gives that $\psi=1$, and hence $1\in\mathcal{Q}^{*}$.
For each $0\neq\alpha\in\mathbb{N}_{0}^{\infty}$, it follows from $e_\alpha\in H_0^p(Kdm_\infty)$ that
\begin{equation}\label{equ611}
\hat{K}(-\alpha)=\int_{\mathbb{T}^\infty}K e_\alpha dm_\infty=\int_{\mathbb{T}^\infty}1\cdot e_\alpha Kdm_\infty=0.
\end{equation}
Since $K$ is nonnegative, the conjugate of (\ref{equ611}) gives that
$$\hat{K}(\alpha)=\int_{\mathbb{T}^\infty}K \bar{e}_\alpha dm_\infty=0,$$
and the proof is complete.
\end{proof}

\section{Nevanlinna functions and Dirichlet series}

It is well known that there exists a fascinating connection between functions in infinitely many variables and Dirichlet series via Bohr correspondence. As in cases of the Nevanlinna class and the Smirnov class in the infinite-variable
setting, there exist analogies for Dirichlet series.
In this section, we will develop the Nevanlinna-Dirichlet class $\mathcal{N}$ and the Smirnov-Dirichlet class $\mathcal{N}_{*}$ for Dirichlet series.
Moreover, the relationships between $\mathcal{N}$ and $N(\mathbb{D}_1^\infty)$, $\mathcal{N}_{*}$ and $N_{*}(\mathbb{D}_1^\infty)$ will be established, respectively.

First, we briefly recall some elements from the theory of Dirichlet series.
A Dirichlet series is a series of the following form:
$$f(s)=\sum_{n=1}^\infty a_n n^{-s},$$
where $s$ is the complex variable.
Let $\mathbb{R}$ be the real line and $\overline{\mathbb{R}}=\mathbb{R}\cup\{\pm\infty\}$ the extended real line. When $\sigma\in\mathbb{R}$, set $\mathbb{C}_{\sigma}=\{z\in\mathbb{C}: \mathrm{Re}\;z>\sigma\}$. For each Dirichlet series $f(s)=\sum_{n=1}^\infty a_n n^{-s}$, its abscissa of uniform convergence $\sigma_{u}(f)$ is defined as
$$\sigma_{u}(f)=\inf\left\{\sigma\in\mathbb{R}: \sum_{n=1}^\infty a_n n^{-s}\;\text{converges uniformly on}\;\mathbb{C}_\sigma\right\}\in\overline{\mathbb{R}},$$
see \cite[pp.10]{DGMS}. When $\sigma\in\mathbb{R}$, write
$$f_\sigma(s)=\sum_{n=1}^\infty a_n n^{-(s+\sigma)},$$
then $\sigma_{u}(f_\sigma)=\sigma_{u}(f)-\sigma$.

Recall that a subset $S$ of $\mathbb{R}$ is relatively dense in $\mathbb{R}$, if there exists a constant $L>0$ such that each closed interval of length $L$ intersects $S$.
A function $f: \mathbb{R}\rightarrow\mathbb{C}$ is called almost periodic, if for each $\varepsilon>0$, there is a relatively dense set $E_{\varepsilon}\subset\mathbb{R}$ satisfying
$$\sup_{t\in\mathbb{R}}|f(t+\tau)-f(t)|<\varepsilon,\quad\forall\tau\in E_{\varepsilon}.$$
We mention that when $f$ is almost periodic, the limit $\lim_{T\rightarrow\infty}\frac{1}{2T}\int_{-T}^{T}f(t)dt$ exists. Let $f$ be a Dirichlet series with $\sigma_{u}(f)<0$, then $t\mapsto f(it)$ is almost periodic. Furthermore, if $h$ is a non-decreasing continuous function on the half real line $\mathbb{R}^{+}=[0,\infty)$, then $h(|f(it)|$ is  almost periodic, see \cite{Bes, QQ}.

Let $p_j$ be the $j$-th prime number. For every $n\in\mathbb{N}$, there exists a unique prime factorization $n=p_{1}^{\alpha_1}\cdots p_{k}^{\alpha_k}$ and set $\alpha(n)=(\alpha_1,\ldots,\alpha_k,0,\ldots)$. When $\zeta=(\zeta_1,\zeta_2,\ldots)$ is a sequence of complex numbers, we write
$\zeta^{\alpha(n)}=\zeta_{1}^{\alpha_1}\cdots\zeta_{k}^{\alpha_k}$. From Bohr's point of view \cite{Boh}, each Dirichlet series $f(s)=\sum_{n=1}^\infty a_n n^{-s}$ can be associated with a formal power series in infinitely many variables as follows: $$(\mathcal{B}f)(\zeta)=\sum_{n=1}^\infty a_n \zeta^{\alpha(n)}.$$
It is easy to verify that if $\sigma_{u}(f)<0$, then the partial sums of $\mathcal{B}f$ converge uniformly on $\mathbb{T}^\infty$, and $\mathcal{B}f\in A(\mathbb{T}^\infty)$, see \cite{HLS, QQ}.

This section contains two parts. In the first part, we define the Nevanlinna-Dirichlet class $\mathcal{N}$, and discuss the relationship between this class and the Nevanlinna class $N(\mathbb{D}_{1}^\infty)$. The main result is Theorem \ref{thm3101}. The second part is devoted to proving this theorem.

\subsection{The Nevanlinna-Dirichlet class $\mathcal{N}$}

Before defining the Nevanlinna-Dirichlet class $\mathcal{N}$, let us start with a smaller class, the Smirnov-Dirichlet class $\mathcal{N}_{*}$.

Let $f$ be a Dirichlet series with $\sigma_u(f)<0$, then $t\mapsto\log(1+|f(it)|)$ is almost periodic, and hence the limit
$$\|f\|_0=\lim_{T\rightarrow\infty}\frac{1}{2T}\int_{-T}^{T}\log(1+|f(it)|)dt$$
exists.
Note that $\mathcal{B}f\in A(\mathbb{T}^\infty)$. Applying Birkhoff-Oxtoby theorem \cite[Theorem 6.5.1]{QQ} yields
\begin{equation}\label{equ3101}
\|f\|_0=\int_{\mathbb{T}^\infty}\log(1+|\mathcal{B}f|)dm_\infty=\|\mathcal{B}f\|_0.
\end{equation}
Let $\mathcal{P}_D$ denote the set of all Dirichlet polynimials, then for every $Q\in\mathcal{P}_D$, $\sigma_u(Q)=-\infty$. Hence $\|Q\|_0$ is well defined and $\|Q\|_0=\|\mathcal{B}Q\|_0$.
We define the Smirnov-Dirichlet class $\mathcal{N}_{*}$ to be the completion of $\mathcal{P}_D$ in the metric $\|\cdot\|_0$.
Then there exists a natural algebraic structure on $\mathcal{N}_{*}$.
When $0<p<\infty$, there is a constant $C_p>0$ such that
$$\|Q\|_0\leq C_p\|Q\|_p^{\min\{p,1\}},\quad Q\in\mathcal{P}_D.$$
Hence all Hardy-Dirichlet spaces $\mathcal{H}^{p}\;(0<p\leq\infty)$ are contained in $\mathcal{N}_{*}$. Since $\mathcal{P}_\infty$ is dense in $N_{*}(\mathbb{T}^\infty)$, the Bohr transform $\mathcal{B}: \mathcal{P}_D\rightarrow\mathcal{P}_\infty$ can be extended to an isometric isomorphism from $\mathcal{N}_{*}$ onto $N_{*}(\mathbb{T}^\infty)$, still denoted by $\mathcal{B}$.
Furthermore, $\mathcal{B}$ is also an algebra isomorphism.

For a fixed $k\in\mathbb{N}$, let $\Xi_k$ be the multiplicative subsemigroup of $\mathbb{N}$ generated by the first $k$ prime numbers $p_1,\ldots,p_k$.
When $f$ is a Dirichlet series of the form $f(s)=\sum_{n\in\Xi_k}a_n n^{-s}$, then $(\mathcal{B}f)(\zeta)=\sum_{n\in\Xi_k}a_n\zeta^{\alpha(n)}$ only depends on the first $k$ variables $\zeta_1,\ldots,\zeta_k$. If $\mathcal{B}f$ is holomorphic on $\mathbb{D}^{k}$,
then  $f(s)=(\mathcal{B}f)(p_{1}^{-s},\ldots,p_{k}^{-s})$ converges uniformly on each $\mathbb{C}_\sigma\;(\sigma>0)$, and thus $\sigma_{u}(f)\leq0$. This inspires us to consider the class $\mathcal{N}_u$ of Dirichlet series $f$ with $\sigma_{u}(f)\leq0$ and $\limsup_{\sigma\rightarrow0^{+}}\|f_\sigma\|_0<\infty$.
By (\ref{equ2102B}), this coincides with Brevig and Perfekt's definition \cite{BP} mentioned in Introduction.
A similar argument as in the proof of Proposition \ref{thm2101} gives that if $f$ is a Dirichlet series with $\sigma_{u}(f)\leq0$, then $\|f_\sigma\|_0$ defines a non-increasing function of $\sigma$, which is also mentioned in \cite{BP}. Thus we have
$$\limsup_{\sigma\rightarrow0^{+}}\|f_\sigma\|_0=\sup_{\sigma>0}\|f_\sigma\|_0.$$

It is clear that $\mathcal{N}_u$ is a linear space. We define
$$\|f\|_0=\sup_{\sigma>0}\|f_\sigma\|_0,\quad f\in\mathcal{N}_u.$$
Then
$$d_0(f,g)=\|f-g\|_{0},\quad f,g\in\mathcal{N}_{u}$$
defines a metric on $\mathcal{N}_u$.
Unfortunately, the class $\mathcal{N}_{u}$ does not contain the familiar Hardy-Dirichlet space $\mathcal{H}^{2}$.
For example, put $a_n=(\sqrt{n}\log n)^{-1}$, then the Dirichlet series $f_{\boldsymbol{a}}(s)=\sum_{n=2}^\infty a_n n^{-s}$ belongs to $\mathcal{H}^{2}$, and $\sigma_{u}(f_{\boldsymbol{a}})=\frac{1}{2}$.
Hence $f_{\boldsymbol{a}}\notin\mathcal{N}_{u}$.
So we define the Nevanlinna-Dirichlet class $\mathcal{N}$ to be the completion of $\mathcal{N}_{u}$ in the metric $\|\cdot\|_0$.
Since $\mathcal{P}_D$ is contained in $\mathcal{N}_u$, we see that $\mathcal{N}_{*}$ is contained in $\mathcal{N}$.
In conclusion, we have
$$\mathcal{H}^{\infty}\subset\mathcal{H}^{q}\subset\mathcal{H}^{p}\subset\mathcal{N}_{*}\subset\mathcal{N}\quad(0<p<q<\infty).$$
The following is the main result of this section, which will be proved in Subsection 5.2.

\begin{thm}\label{thm3101}
Let $f(s)=\sum_{n=1}^\infty a_n n^{-s}$ be a Dirichlet series in $\mathcal{N}_u$. Then the formal power series $(\mathcal{B}f)(\zeta)=\sum_{n=1}^\infty a_n \zeta^{\alpha(n)}$ converges in $\mathbb{D}_{1}^\infty$. Moreover, we have $\mathcal{B}f\in N(\mathbb{D}_{1}^\infty)$, and $\|\mathcal{B}f\|_0=\|f\|_0$. Therefore, the Bohr transform $\mathcal{B}$ can be extended to an isometry from $\mathcal{N}$ to $N(\mathbb{D}_{1}^\infty)$, still denoted by $\mathcal{B}$.
\end{thm}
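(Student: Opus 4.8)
The plan is to realize $\mathcal{B}f$ as the limit, as $\sigma\to0^+$, of the Bohr lifts of the translates $f_\sigma$, and then to transport the estimates available for each $f_\sigma$ to this limit. For $\sigma>0$ one has $\sigma_u(f_\sigma)\leq\sigma_u(f)-\sigma<0$, so $g_\sigma:=\mathcal{B}(f_\sigma)=\sum_{n=1}^\infty a_n n^{-\sigma}\zeta^{\alpha(n)}$ lies in $A(\mathbb{D}^\infty)\subset N(\mathbb{D}_1^\infty)$ (being bounded on $\overline{\mathbb{D}}^\infty$), and by (\ref{equ3101}), $\|g_\sigma\|_0=\|f_\sigma\|_0\leq\|f\|_0$. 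The decisive structural observation is the scaling identity $g_\sigma=(\mathcal{B}f)\circ T_\sigma$, where $T_\sigma\zeta=(p_1^{-\sigma}\zeta_1,p_2^{-\sigma}\zeta_2,\ldots)$, which follows from $n^{-\sigma}\zeta^{\alpha(n)}=\prod_j(p_j^{-\sigma}\zeta_j)^{\alpha_j}$. Since $T_\sigma$ contracts every coordinate and $T_\sigma\zeta\to\zeta$ in $\ell^1$ as $\sigma\to0^+$, it is natural to define $\mathcal{B}f:=\lim_{\sigma\to0^+}g_\sigma$ and to verify that this limit exists in the required sense.

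First I would show that $\{g_\sigma\}_{\sigma>0}$ is locally uniformly bounded: by Lemma \ref{thm2105B}, $\log(1+|g_\sigma(\zeta)|)\leq\|\mathbf{P}_\zeta\|_\infty\|g_\sigma\|_0\leq\|\mathbf{P}_\zeta\|_\infty\|f\|_0$, and $\{\mathbf{P}_\zeta\}_{\zeta\in V_{r,M}}$ is bounded in $L^\infty(\mathbb{T}^\infty)$, so $\{g_\sigma\}$ is uniformly bounded on each domain $V_{r,M}$ of (\ref{equ2103A}). A normal-families (Vitali) argument on these domains, using Cauchy estimates for equicontinuity on the compact sets $s\overline{\mathbb{D}}\times s^2\overline{\mathbb{D}}\times\cdots$ together with the coefficientwise convergence $a_n n^{-\sigma}\to a_n$ at each multi-index $\alpha(n)$, then forces $g_\sigma$ to converge, locally uniformly on $\mathbb{D}_1^\infty$, to a holomorphic function $F$ whose monomial coefficients are exactly the $a_n$. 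Because $n\mapsto\alpha(n)$ is a bijection onto the finitely supported multi-indices, the monomial expansion of $F$ is precisely $\sum_n a_n\zeta^{\alpha(n)}$; reading it off through Bohr's Abschnitte, $A_N F(\zeta)=\sum_{n\in\Xi_N}a_n\zeta^{\alpha(n)}\to F(\zeta)$, gives the asserted convergence in $\mathbb{D}_1^\infty$. I expect this passage $\sigma\to0^+$ to be the main obstacle: in infinite dimensions bounded sets are not compact, so the normal-families step must be run on the compact polydisk-type sets and combined with coefficientwise convergence to pin down a single limit independent of the chosen sequence $\sigma_k\to0$.

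Granting that $F=\mathcal{B}f$ is holomorphic on $\mathbb{D}_1^\infty$, I would prove $F\in N(\mathbb{D}_1^\infty)$ with $\|F\|_0=\|f\|_0$ via two inequalities. For fixed $0<r<1$ the points $(r^jw_j)_j$ range over the compact set $r\overline{\mathbb{D}}\times r^2\overline{\mathbb{D}}\times\cdots$, so $(g_\sigma)_{[r]}\to F_{[r]}$ uniformly on $\mathbb{T}^\infty$; letting $\sigma\to0^+$ in $\int_{\mathbb{T}^\infty}\log(1+|(g_\sigma)_{[r]}|)\,dm_\infty\leq\|g_\sigma\|_0=\|f_\sigma\|_0\leq\|f\|_0$ yields $\int_{\mathbb{T}^\infty}\log(1+|F_{[r]}|)\,dm_\infty\leq\|f\|_0$, whence $F\in N(\mathbb{D}_1^\infty)$ and $\|F\|_0\leq\|f\|_0$. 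For the reverse bound I would note that $(g_\sigma)_{[r]}(w)=F\big((p_j^{-\sigma}r^jw_j)_j\big)$ and invoke the coordinatewise monotonicity of the integral means of the nonnegative $\infty$-subharmonic function $\log(1+|F|)$: since $p_j^{-\sigma}r^j\leq r^j$ for every $j$, the same successive-subharmonicity argument as in Proposition \ref{thm2101} (applied to finitely many coordinates, then passed to the limit by Fatou's lemma) gives $\int_{\mathbb{T}^\infty}\log(1+|(g_\sigma)_{[r]}|)\,dm_\infty\leq\int_{\mathbb{T}^\infty}\log(1+|F_{[r]}|)\,dm_\infty\leq\|F\|_0$. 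Taking the supremum over $r$ gives $\|f_\sigma\|_0\leq\|F\|_0$, and then the supremum over $\sigma$ gives $\|f\|_0\leq\|F\|_0$.

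Finally, since $\mathcal{N}_u$ is a linear space with $f-g\in\mathcal{N}_u$ whenever $f,g\in\mathcal{N}_u$, the identity $\|\mathcal{B}h\|_0=\|h\|_0$ shows that $\mathcal{B}\colon\mathcal{N}_u\to N(\mathbb{D}_1^\infty)$ is a linear isometry for the translation-invariant metrics $d_0$. As $N(\mathbb{D}_1^\infty)$ is complete by Proposition \ref{thm2105} and $\mathcal{N}_u$ is dense in its completion $\mathcal{N}$, the map $\mathcal{B}$ extends uniquely to an isometry $\mathcal{B}\colon\mathcal{N}\to N(\mathbb{D}_1^\infty)$, which is the desired conclusion.
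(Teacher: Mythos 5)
Your proposal is correct, but it takes a genuinely different route from the paper's. The paper proceeds in two stages that keep every compactness argument strictly finite-dimensional: first (Proposition \ref{thm3201}) it fixes $k$ and lets $\sigma\rightarrow0^{+}$, using classical Montel on $\mathbb{D}^k$ plus coefficient identification to build each Abschnitt $\mathcal{B}_k f\in N(\mathbb{D}^k)$; then (Proposition \ref{thm3202}) it lets $k\rightarrow\infty$, proving pointwise convergence of $\{\mathcal{B}_k f\}$ on $\mathbb{D}_1^\infty$ via the factorization lemma $\ell^1=\ell^1\cdot c_0$ (Lemma \ref{thm3203}) and Hilbert's Schwarz-lemma argument; finally it gets $\|\mathcal{B}f\|_0=\|f\|_0$ by interchanging suprema over $r$ (resp.\ $\sigma$) and $k$, using the finite-variable identity (\ref{equ3200}). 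You instead take the single limit $\sigma\rightarrow0^{+}$ of the lifts $g_\sigma=\mathcal{B}f_\sigma$ directly on $\mathbb{D}_1^\infty$ (your uniform bound on $V_{r,M}$ via Lemma \ref{thm2105B} is the same mechanism as the paper's Corollary \ref{thm3202B}, applied to $\{g_\sigma\}$ rather than $\{\mathcal{B}_k f\}$), and you replace the sup-interchange by the scaling identity $g_\sigma=(\mathcal{B}f)\circ T_\sigma$ together with a coordinatewise-radii extension of Proposition \ref{thm2101}; that extension is legitimate, since the proposition's proof (subharmonicity in the first $n$ variables, then Fatou) never uses the special radii $r^n\leq s^n$. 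What your route buys is directness and a transparent norm identity via the dilation semigroup $T_\sigma$; what it costs is that the infinite-dimensional Vitali step must be executed carefully, and your sketch compresses the two points that need it: (i) coefficientwise convergence $a_n n^{-\sigma}\rightarrow a_n$ does not by itself give convergence of $g_\sigma(\zeta)$ at a point with infinitely many nonzero coordinates, and the compacta $s\overline{\mathbb{D}}\times s^2\overline{\mathbb{D}}\times\cdots$ do \emph{not} exhaust $\mathbb{D}_1^\infty$ (points with, say, polynomially decaying coordinates lie in none of them) — the correct execution is pointwise convergence at finitely supported points (where it reduces to the finite-variable Vitali theorem, i.e.\ your argument silently re-proves Proposition \ref{thm3201} for the full net), combined with uniform Lipschitz bounds on $\ell^1$-balls inside $V_{r,M}$ coming from one-variable Cauchy estimates along complex lines, and a three-epsilon argument using density of finitely supported points; (ii) the identity $g_\sigma=(\mathcal{B}f)\circ T_\sigma$ and the identification of the limit's coefficients rest on the uniqueness and absolute convergence of monomial expansions (\ref{equ2104A}), and the equality $\|g_\sigma\|_0=\|f_\sigma\|_0$, read as an $N(\mathbb{D}_1^\infty)$-norm, needs Corollary \ref{thm2101B} plus dominated convergence using the continuity of $g_\sigma$ on $\overline{\mathbb{D}}^\infty$. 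All of these are fillable with tools already in the paper, so there is no gap — but they are exactly the places where, as you anticipated, the passage $\sigma\rightarrow0^{+}$ in infinite dimensions is the main obstacle, and the paper's Abschnitt detour exists precisely to avoid them.
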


For a general Dirichlet series $f$ with $\sigma_u(f)\leq0$, the conclusion of Theorem \ref{thm3101} fails. Here is an example.

\begin{exam}\label{thm3101B}
Let $p_k$ be the $k$-th prime number and $\{p_{k_j}\}_{j=1}^\infty$ a subsequence of $\{p_k\}_{k=1}^\infty$ satisfying $p_{k_{j+1}}>2p_{k_j}$ for all $j\in\mathbb{N}$. Write
$$f(s)=\sum_{j=1}^\infty\frac{\log p_{k_j}}{p_{k_j}^s}.$$
Let $\sigma>0$, then there is a constant $M_\sigma>0$ such that for each $j\in\mathbb{N}$, $\log p_{k_j}\leq M_\sigma p_{k_j}^\sigma$. Therefore,
$$\sum_{j=1}^\infty\frac{\log p_{k_j}}{p_{k_j}^{2\sigma}}\leq M_\sigma\sum_{j=1}^\infty p_{k_j}^{-\sigma}\leq M_\sigma p_{k_1}^{-\sigma}\sum_{j=1}^\infty 2^{-(j-1)\sigma}<\infty.$$
Since $\sigma$ is arbitrary, we see that $\sigma_{u}(f)\leq0$. However, the sequence $\{\log p_{k_j}\}_{j=1}^\infty$ is unbounded, which implies that
$(\mathcal{B}f)(\zeta)=\sum_{j=1}^\infty(\log p_{k_j})\zeta_{k_j}$ diverges for some $\zeta\in\mathbb{D}_{1}^\infty$.
\end{exam}

The following example implies that $\mathcal{N}_{*}$ is a proper subclass of $\mathcal{N}$.

\begin{exam}
Write $\varphi(z)=\exp\left(\frac{1+z}{1-z}\right)$, $z\in\mathbb{D}$. It is easy to see that $\varphi\in N(\mathbb{D})$, but $\varphi\notin N_{*}(\mathbb{D})$. Let $a_n$ denote the $n$-th Taylor's coefficient of $\varphi$, and put $f(s)=\sum_{n=0}^\infty a_n 2^{-ns}$. Then for every $\zeta\in\mathbb{D}_{1}^\infty$, we have $(\mathcal{B}f)(\zeta)=\varphi(\zeta_1)$, and hence $\mathcal{B}f\notin N_{*}(\mathbb{D}_{1}^\infty)$. Note that $\mathcal{B}f$ is a holomorphic function on $\mathbb{D}$. As we mentioned before, $\sigma_{u}(f)\leq0$. On the other hand, it follows from (\ref{equ3101}) that
$$\sup_{\sigma>0}\|f_\sigma\|_0=\sup_{\sigma>0}\|\mathcal{B}f_\sigma\|_0
=\sup_{\sigma>0}\int_{\mathbb{T}}\log(1+|\varphi(2^{-\sigma}\lambda)|)dm_1(\lambda)<\infty,$$
which implies $f\in\mathcal{N}_{u}$.
Now we claim that $f\notin\mathcal{N}_{*}$.
Indeed, if $f\in\mathcal{N}_{*}$, then there exists a sequence of Dirichlet polynomials $\{Q_n\}_{n=1}^\infty$ such that $\|Q_n-f\|_0\rightarrow0$ as $n\rightarrow\infty$. By Theorem \ref{thm3101}, $\|\mathcal{B}Q_n-\mathcal{B}f\|_0\rightarrow0$ as $n\rightarrow\infty$, which gives that $\mathcal{B}f\in N_{*}(\mathbb{D}_{1}^\infty)$, a contradiction.
\end{exam}

Let $0<p<q<\infty$. We draw up the following figure to show relations between all spaces, where ``$\simeq$" denotes that the corresponding spaces are canonically isometrically isomorphic, and ``$\hookrightarrow$" denotes that the former space can be isometrically embedded into the latter.
It remains to be clarified whether $\mathcal{B}: \mathcal{N}\rightarrow N(\mathbb{D}_{1}^\infty)$ is surjective.

\begin{center}
\tabcolsep0.05in
\renewcommand\arraystretch{1.5}
\begin{tabular}{ccccccccc}
$\mathcal{H}^{\infty}$&$\subset$&$\mathcal{H}^{q}$&$\subset$&$\mathcal{H}^{p}$&$\subset$&$\mathcal{N}_{*}$&$\subset$&$\mathcal{N}$\\
\rotatebox[origin = c]{270}{$\simeq$}&\quad&\rotatebox[origin = c]{270}{$\simeq$}&\quad&\rotatebox[origin = c]{270}{$\simeq$}&\quad&\rotatebox[origin = c]{270}{$\simeq$}&\quad&\rotatebox[origin = c]{270}{$\hookrightarrow$}\\
$H^{\infty}(\mathbb{D}_2^\infty)$&$\subset$&$H^{q}(\mathbb{D}_2^\infty)$&$\subset$&$H^{p}(\mathbb{D}_2^\infty)$&$\subset$&$N_{*}(\mathbb{D}_{1}^\infty)$&$\subset$&$N(\mathbb{D}_{1}^\infty)$\\
\end{tabular}
\end{center}

\subsection{Proof of Theorem \ref{thm3101}}

This subsection is mainly dedicated to proving Theorem \ref{thm3101}. To prove this theorem, a series of preparations is needed.

Let $F$ be a holomorphic function on the polydisk $\mathbb{D}^k$. For each $\sigma>0$, write
$$F_{\{\sigma\}}(w)=F(p_1^{-\sigma}w_1,\ldots,p_k^{-\sigma}w_k),\quad w\in\mathbb{T}^k,$$
where $p_j$ denotes the $j$-th prime number. Then by the subharmonicity of $\log(1+|F|)$ in each variable separately,
\begin{equation}\label{equ3200}
\sup_{\sigma>0}\int_{\mathbb{T}^k}\log(1+|F_{\{\sigma\}}|)dm_k=\sup_{0<r<1}\int_{\mathbb{T}^k}\log(1+|F_{[r]}|)dm_k,
\end{equation}
where $m_k$ denotes the normalized Lebesgue measure on $\mathbb{T}^k$. Moreover, integrals in the left side of this equality decrease with $\sigma$.

Suppose that $f(s)=\sum_{n=1}^\infty a_n n^{-s}$ is a Dirichlet series in $\mathcal{N}_u$. We first consider Bohr's $k$te Abschnitt
$$(\mathcal{B}_{k}f)(\zeta)=\sum_{n\in\Xi_k}a_n\zeta^{\alpha(n)}$$
of $\mathcal{B}f$.
It is a formal power series only depending on the first $k$ variables $\zeta_1,\ldots,\zeta_k$. We have the following proposition.

\begin{prop}\label{thm3201}
If $f\in\mathcal{N}_u$, then for every $k\in\mathbb{N}$, the formal power series $\mathcal{B}_k f$ converges in $\mathbb{D}^k$, and it defines a function in the Nevanlinna class $N(\mathbb{D}^k)$ over $\mathbb{D}^k$.
\end{prop}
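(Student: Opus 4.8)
The plan is to deduce everything from the single prime-shift behaviour of the translated series $f_\sigma$ together with the monotonicity identity (\ref{equ3200}), and to isolate a short ``Abschnitt does not increase the $\log(1+|\cdot|)$-integral'' estimate for the Nevanlinna bound. First I would fix $\sigma>0$ and note that $\sigma_u(f_\sigma)=\sigma_u(f)-\sigma\leq-\sigma<0$, so, as recalled just before the proposition, $\mathcal{B}f_\sigma\in A(\mathbb{T}^\infty)$ and the partial sums $S_N(\zeta)=\sum_{n\leq N}a_n n^{-\sigma}\zeta^{\alpha(n)}$ converge uniformly on $\overline{\mathbb{D}}^\infty$. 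Applying Bohr's $k$te Abschnitt $A_k$, i.e. the evaluation $G\mapsto G(\,\cdot\,,0,0,\ldots)$, which is a contraction on $A(\mathbb{T}^\infty)$, commutes with passing to partial sums, and annihilates every monomial involving a variable of index $>k$, I obtain that $A_kS_N=\sum_{n\leq N,\,n\in\Xi_k}a_n n^{-\sigma}\zeta^{\alpha(n)}$ converges uniformly on $\overline{\mathbb{D}}^k$. Hence for every $\sigma>0$ the series $\mathcal{B}_kf_\sigma(\zeta)=\sum_{n\in\Xi_k}a_n n^{-\sigma}\zeta^{\alpha(n)}$ defines a function in the polydisk algebra $A(\mathbb{D}^k)$, continuous on $\overline{\mathbb{D}}^k$ and holomorphic on $\mathbb{D}^k$.

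Next I would pass from $\sigma>0$ back to $\mathcal{B}_kf$ by rescaling. At the level of partial sums one has $\mathcal{B}_kf_\sigma(\zeta)=\mathcal{B}_kf(p_1^{-\sigma}\zeta_1,\ldots,p_k^{-\sigma}\zeta_k)$. Given $w\in\mathbb{D}^k$ (the case $w=0$ being trivial), I choose $\sigma=\min\{-\log|w_j|/\log p_j:\ 1\leq j\leq k,\ w_j\neq0\}>0$ and $\zeta_j=p_j^{\sigma}w_j$; because only finitely many coordinates occur, this $\sigma$ is strictly positive and $\zeta\in\overline{\mathbb{D}}^k$, while $a_n w^{\alpha(n)}=a_n n^{-\sigma}\zeta^{\alpha(n)}$ for every $n\in\Xi_k$. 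Thus the $n$-ordered partial sums of $\mathcal{B}_kf$ at $w$ coincide with those of $\mathcal{B}_kf_\sigma$ at $\zeta$ and converge, so $\mathcal{B}_kf$ converges at every point of $\mathbb{D}^k$. Moreover, on each open sub-polydisk $\{|\zeta_j|<p_j^{-\sigma}\}$ the sum equals $\zeta\mapsto\mathcal{B}_kf_\sigma(p_1^{\sigma}\zeta_1,\ldots,p_k^{\sigma}\zeta_k)$, a holomorphic function; letting $\sigma\to0^{+}$ these polydisks exhaust $\mathbb{D}^k$, so $\mathcal{B}_kf$ is holomorphic on $\mathbb{D}^k$.

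For membership in $N(\mathbb{D}^k)$ I would invoke (\ref{equ3200}) with $F=\mathcal{B}_kf$: since $F_{\{\sigma\}}=\mathcal{B}_kf_\sigma$ on $\mathbb{T}^k$, it suffices to bound $\int_{\mathbb{T}^k}\log(1+|\mathcal{B}_kf_\sigma|)\,dm_k$ uniformly in $\sigma$. The key inequality is $\int_{\mathbb{T}^k}\log(1+|A_kG|)\,dm_k\leq\int_{\mathbb{T}^\infty}\log(1+|G|)\,dm_\infty$ for $G=\mathcal{B}f_\sigma$. I would prove it by fixing $w'=(w_1,\ldots,w_k)\in\mathbb{T}^k$ and applying Corollary \ref{thm2101B} to the tail function $\log(1+|G(w',\,\cdot\,)|)$, which is continuous on the closed tail polydisk and $\infty$-subharmonic on the tail copy of $\mathbb{D}_1^\infty$; its sub-mean value inequality at the origin gives $\log(1+|G(w',0,0,\ldots)|)\leq\int\log(1+|G(w',w'')|)\,dm(w'')$, and integrating over $w'\in\mathbb{T}^k$ and using Fubini yields the claim. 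Since $\int_{\mathbb{T}^\infty}\log(1+|\mathcal{B}f_\sigma|)\,dm_\infty=\|f_\sigma\|_0\leq\|f\|_0<\infty$, taking the supremum over $\sigma>0$ and combining (\ref{equ3200}) with (\ref{equ2102B}) shows $\sup_{0<r<1}\int_{\mathbb{T}^k}\log^{+}|F_{[r]}|\,dm_k<\infty$, that is, $\mathcal{B}_kf\in N(\mathbb{D}^k)$.

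I expect the convergence part to be the main obstacle: transferring uniform convergence of $\mathcal{B}f_\sigma$ on $\overline{\mathbb{D}}^\infty$ into genuine pointwise convergence of the $\Xi_k$-restricted series at every point of the \emph{open} polydisk $\mathbb{D}^k$. The decisive feature is that $k$ is finite, so a single $\sigma>0$ simultaneously rescales all $k$ coordinates into $\overline{\mathbb{D}}^k$; Example \ref{thm3101B} shows that no such device survives for the full series in infinitely many variables. By contrast, once the Abschnitt-contraction inequality is isolated and fed to Corollary \ref{thm2101B}, the $N(\mathbb{D}^k)$ estimate is routine.
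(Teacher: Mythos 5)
Your proof is correct, but it reaches the conclusion by a genuinely different route than the paper. For the convergence and holomorphy of $\mathcal{B}_k f$, the paper argues by compactness: it uses Lemma \ref{thm2105B} together with the uniform bound (\ref{equ3202}) to show $\{\mathcal{B}_k f_\sigma\}_{\sigma>0}$ is locally uniformly bounded on $\mathbb{D}^k$, extracts a limit $G_k$ along $\sigma_m\rightarrow0^{+}$ via Montel's theorem, and then identifies the Taylor coefficients $b_n=\lim_m a_n n^{-\sigma_m}=a_n$ to conclude that $\mathcal{B}_k f$ converges and equals $G_k$. You avoid normal families entirely: your substitution $\zeta_j=p_j^{\sigma}w_j$ with $\sigma=\min_j\{-\log|w_j|/\log p_j\}$ turns the partial sums of $\mathcal{B}_k f$ at $w\in\mathbb{D}^k$ into those of $\mathcal{B}_k f_\sigma\in A(\mathbb{D}^k)$ at a point of $\overline{\mathbb{D}}^k$, and holomorphy follows by exhausting $\mathbb{D}^k$ with the sub-polydisks $\{|\zeta_j|<p_j^{-\sigma}\}$. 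This is more elementary and constructive, and it correctly isolates why finiteness of $k$ is decisive (one $\sigma$ rescales all coordinates simultaneously, which is exactly what fails in Example \ref{thm3101B}); you also supply a proof that $\mathcal{B}_k f_\sigma\in A(\mathbb{D}^k)$, a point the paper asserts without detail. For the Nevanlinna bound, the two arguments are cousins: the paper proves that $\int_{\mathbb{T}^\infty}\log(1+|\mathcal{B}_k f_\sigma|)\,dm_\infty$ increases with $k$ (via Corollary \ref{thm2101B}) and passes to the limit $k\rightarrow\infty$ by dominated convergence to get the two-sided identity (\ref{equ3202}), whereas you prove the one-shot Abschnitt inequality $\int_{\mathbb{T}^k}\log(1+|A_kG|)\,dm_k\leq\int_{\mathbb{T}^\infty}\log(1+|G|)\,dm_\infty$ by slicing and applying Corollary \ref{thm2101B} to the tail variables; your slice argument is sound ($G(w',\cdot)$ is a uniform limit of polynomials, hence continuous on the closed tail polydisk and separately holomorphic, so $\log(1+|G(w',\cdot)|)$ is admissible, and Tonelli finishes it). What the paper's heavier formulation buys is reuse: the Montel template recurs in Proposition \ref{thm3202}, and the equality in (\ref{equ3202}) (not merely the upper bound you derive, which suffices here) is what later yields the isometry $\|\mathcal{B}f\|_0=\|f\|_0$ in Theorem \ref{thm3101}; your streamlined inequality proves the present proposition but would need the matching lower bound to serve that later purpose.
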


\begin{proof}
Let $f(s)=\sum_{n=1}^\infty a_n n^{-s}$ be a Dirichlet series in $\mathcal{N}_u$, then for every $\sigma>0$, $\sigma_{u}(f_\sigma)<0$, and thus $\mathcal{B}f_{\sigma}\in A(\mathbb{T}^\infty)$. Applying Lebesgue's dominated convergence theorem gives
\begin{equation}\label{equ3201}
\lim_{k\rightarrow\infty}\int_{\mathbb{T}^\infty}\log(1+|\mathcal{B}_k f_\sigma|)dm_\infty=\int_{\mathbb{T}^\infty}\log(1+|\mathcal{B}f_\sigma|)dm_\infty=\|\mathcal{B}f_\sigma\|_0.
\end{equation}
It follows from Corollary \ref{thm2101B} that the integrals in the left side of (\ref{equ3201}) increase with $k$. Therefore we have
$$\sup_{k\in\mathbb{N}}\sup_{\sigma>0}\int_{\mathbb{T}^\infty}\log(1+|\mathcal{B}_k f_{\sigma}|)dm_\infty=
\sup_{\sigma>0}\sup_{k\in\mathbb{N}}\int_{\mathbb{T}^\infty}\log(1+|\mathcal{B}_k f_{\sigma}|)dm_\infty=\sup_{\sigma>0}\|\mathcal{B}f_\sigma\|_0.$$
By (\ref{equ3101}), for every $\sigma>0$, $\|\mathcal{B}f_\sigma\|_0=\|f_\sigma\|_0$, and hence
\begin{equation}\label{equ3202}
\sup_{k\in\mathbb{N}}\sup_{\sigma>0}\int_{\mathbb{T}^\infty}\log(1+|\mathcal{B}_k f_{\sigma}|)dm_\infty=\sup_{\sigma>0}\|\mathcal{B}f_\sigma\|_0=
\sup_{\sigma>0}\|f_\sigma\|_0=\|f\|_0<\infty.
\end{equation}
For a fixed $k\in\mathbb{N}$, the family $\{\mathcal{B}_k f_\sigma\}_{\sigma>0}$ is contained in the polydisk algebra $A(\mathbb{D}^{k})$. It follows from Lemma \ref{thm2105B} that for each $\zeta\in\mathbb{D}^{k}$,
$$\log(1+|(\mathcal{B}_k f_\sigma)(\zeta)|)\leq\|\mathbf{P}_\zeta\|_\infty\int_{\mathbb{T}^\infty}\log(1+|\mathcal{B}_k f_{\sigma}|)dm_\infty\leq
\|\mathbf{P}_\zeta\|_\infty\|f\|_0.$$
This implies that $\{\mathcal{B}_k f_\sigma\}_{\sigma>0}$ is uniformly bounded on each compact subset of $\mathbb{D}^{k}$. By Montel's theorem \cite[Theorem 1.5]{Ohs}, there is a sequence $\sigma_m \rightarrow0^{+}\;(m\rightarrow\infty)$ and a function $G_k$ holomorphic on $\mathbb{D}^{k}$ such that $\{\mathcal{B}_k f_{\sigma_m}\}_{m=1}^\infty$ converges to $G_k$ uniformly on each compact subset of $\mathbb{D}^{k}$. Let
$$G_{k}(\zeta)=\sum_{n\in\Xi_k} b_n \zeta^{\alpha(n)},\quad \zeta\in\mathbb{D}^{k}$$
be the Taylor expansion of $G_k$. Since for every $m\in\mathbb{N}$,
$$(\mathcal{B}_k f_{\sigma_m})(\zeta)=\sum_{n\in\Xi_k}a_n n^{-\sigma_m}\zeta^{\alpha(n)},\quad \zeta\in\mathbb{D}^{k},$$
the uniform convergence implies that
$$b_n=\lim_{m\rightarrow\infty}a_n n^{-\sigma_m}=a_n,\quad n\in\Xi_k,$$
and hence $(\mathcal{B}_k f)(\zeta)=\sum_{n\in\Xi_k}a_n\zeta^{\alpha(n)}$ converges in $\mathbb{D}^k$. Noticing that for every $\sigma>0$, $\mathcal{B}_k f_\sigma=(\mathcal{B}_k f)_{\{\sigma\}}$, and combining (\ref{equ3200}) with (\ref{equ3202}) show that
\begin{equation}\label{equ3203}
\sup_{0<r<1}\int_{\mathbb{T}^\infty}\log(1+|(\mathcal{B}_k f)_{[r]}|)dm_\infty=\sup_{\sigma>0}\int_{\mathbb{T}^\infty}\log(1+|\mathcal{B}_k f_\sigma|)dm_\infty\leq\|f\|_0<\infty,
\end{equation}
which implies $\mathcal{B}_k f\in N(\mathbb{D}^k)$.
\end{proof}

For every $k\in\mathbb{N}$, $N(\mathbb{D}^k)$ can be regarded as a subset of $N(\mathbb{D}_1^\infty)$.
Hence by Proposition \ref{thm3201}, when $f\in\mathcal{N}_u$, $\{\mathcal{B}_k f\}_{k=1}^\infty$ is a sequence in $N(\mathbb{D}_1^\infty)$.
Then it follows from Lemma \ref{thm2105B} and (\ref{equ3203}) that for each $\zeta\in\mathbb{D}_1^\infty$,
$$\log(1+|(\mathcal{B}_k f)(\zeta)|)\leq\|\mathbf{P}_\zeta\|_\infty\|\mathcal{B}_k f\|_0\leq\|\mathbf{P}_\zeta\|_\infty\|f\|_0.$$
Therefore we have

\begin{cor}\label{thm3202B}
If $f\in\mathcal{N}_u$, then the sequence $\{\mathcal{B}_k f\}_{k=1}^\infty$ is uniformly bounded on each domain $V_{r,M}\;(0<r<1, M>0)$ defined by (\ref{equ2103A}).
\end{cor}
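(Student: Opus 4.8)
The plan is to read the statement off directly from the pointwise estimate furnished by Lemma \ref{thm2105B}, combined with the uniform control on $\|\mathcal{B}_k f\|_0$ already established in Proposition \ref{thm3201}. The crucial point is that the bound on the Nevanlinna metric of $\mathcal{B}_k f$ does not depend on $k$, so a single application of the Poisson-kernel estimate suffices for the entire sequence.

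First I would record that, by Proposition \ref{thm3201}, for each $k\in\mathbb{N}$ the function $\mathcal{B}_k f$ lies in $N(\mathbb{D}^k)\subset N(\mathbb{D}_1^\infty)$, and that inequality (\ref{equ3203}) gives $\|\mathcal{B}_k f\|_0\le\|f\|_0$ for every $k$. Thus $\{\mathcal{B}_k f\}_{k=1}^\infty$ is a sequence in $N(\mathbb{D}_1^\infty)$ whose metric values are all bounded by the single constant $\|f\|_0$. Applying Lemma \ref{thm2105B} to each $F=\mathcal{B}_k f$ then yields, for every $\zeta\in\mathbb{D}_1^\infty$,
$$\log(1+|(\mathcal{B}_k f)(\zeta)|)\le\|\mathbf{P}_\zeta\|_\infty\,\|\mathcal{B}_k f\|_0\le\|\mathbf{P}_\zeta\|_\infty\,\|f\|_0,$$
an estimate uniform in $k$.

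To pass from this pointwise inequality to a uniform bound on $V_{r,M}$, I would invoke the property recorded immediately after the definition (\ref{equ2103A}), namely that $\{\mathbf{P}_\zeta\}_{\zeta\in V_{r,M}}$ is a bounded family in $L^\infty(\mathbb{T}^\infty)$. Writing $C_{r,M}=\sup_{\zeta\in V_{r,M}}\|\mathbf{P}_\zeta\|_\infty<\infty$, the previous display gives $|(\mathcal{B}_k f)(\zeta)|\le\exp(C_{r,M}\|f\|_0)-1$ for all $k\in\mathbb{N}$ and all $\zeta\in V_{r,M}$, which is precisely the asserted uniform bound.

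There is no serious obstacle here: the analytic content was already expended in Proposition \ref{thm3201}, where the finiteness of $\sup_{k}\sup_{\sigma>0}\int_{\mathbb{T}^\infty}\log(1+|\mathcal{B}_k f_{\sigma}|)\,dm_\infty$ was deduced from $f\in\mathcal{N}_u$. The only two things to verify carefully are that the bound $\|\mathcal{B}_k f\|_0\le\|f\|_0$ is genuinely independent of $k$ (it is, since the right-hand side of (\ref{equ3203}) is $\|f\|_0$), and that the supremum of $\|\mathbf{P}_\zeta\|_\infty$ over $V_{r,M}$ is finite, which is exactly the defining feature of $V_{r,M}$.
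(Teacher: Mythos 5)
Your proposal is correct and follows exactly the paper's own argument: the paper likewise combines the bound $\|\mathcal{B}_k f\|_0\le\|f\|_0$ from (\ref{equ3203}) with the pointwise estimate of Lemma \ref{thm2105B}, and then uses the boundedness of $\{\mathbf{P}_\zeta\}_{\zeta\in V_{r,M}}$ in $L^\infty(\mathbb{T}^\infty)$ noted after (\ref{equ2103A}). Your write-up merely makes explicit the final exponentiation step that the paper leaves implicit.
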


We need the following result.

\begin{prop}\label{thm3202}
If $f\in\mathcal{N}_u$, then the sequence $\{\mathcal{B}_k f\}_{k=1}^\infty$ converges pointwise on $\mathbb{D}_1^\infty$, and
$$(\hat{\mathcal{B}}f)(\zeta)=\lim_{k\rightarrow\infty}(\mathcal{B}_k f)(\zeta),\quad\zeta\in\mathbb{D}_1^\infty$$
defines a holomorphic function on $\mathbb{D}_1^\infty$.
\end{prop}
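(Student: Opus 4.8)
The plan is to prove the pointwise convergence of $\{\mathcal{B}_k f\}_{k=1}^\infty$ on $\mathbb{D}_1^\infty$ by showing that at each fixed point the sequence is Cauchy, and then to upgrade pointwise convergence to holomorphy of the limit through the paper's definition of holomorphic functions, namely local boundedness together with separate holomorphy. Throughout I would use two facts already available: each $\mathcal{B}_k f$ is holomorphic on $\mathbb{D}_1^\infty$ with Taylor expansion $\sum_{n\in\Xi_k}a_n\zeta^{\alpha(n)}$ (Proposition \ref{thm3201}), and the family $\{\mathcal{B}_k f\}$ is uniformly bounded on each domain $V_{r,M}$ (Corollary \ref{thm3202B}).

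Fix $\zeta\in\mathbb{D}_1^\infty$. First I would choose $r\in(0,1)$ and $M>0$ with strict margins so that $\zeta\in V_{r,M}$; this is possible since $\|\zeta\|_1<\infty$ and $\sup_n|\zeta_n|<1$ (because $\zeta_n\to 0$). Let $C$ be a uniform bound for $\{\mathcal{B}_k f\}$ on $V_{r,M}$. The key observation is the stabilization identity $A_m(\mathcal{B}_k f)=\mathcal{B}_m f$ for all $k\ge m$: setting the variables $\zeta_{m+1},\zeta_{m+2},\dots$ equal to $0$ annihilates exactly the monomials $\zeta^{\alpha(n)}$ with $n\notin\Xi_m$, and $\Xi_m\subseteq\Xi_k$. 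Writing $\zeta^{(m)}=(\zeta_1,\dots,\zeta_m,0,\dots)$ and $x=\zeta-\zeta^{(m)}$ for the tail, with $\|x\|_1=\delta_m:=\sum_{n>m}|\zeta_n|\to 0$, this identity gives $\mathcal{B}_k f(\zeta^{(m)})=\mathcal{B}_m f(\zeta)$, a value independent of $k\ge m$, which will serve as a common anchor.

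Next I would estimate how far $\mathcal{B}_k f(\zeta)$ can drift from this anchor. Consider the one-variable slice $\psi_k(z)=\mathcal{B}_k f(\zeta^{(m)}+zx)$, which is holomorphic in $z$ wherever $\zeta^{(m)}+zx\in\mathbb{D}_1^\infty$. A short computation shows $\zeta^{(m)}+zx\in V_{r,M}$ whenever $|z|<\rho_m:=\min\{(M-\|\zeta^{(m)}\|_1)/\delta_m,\; r/\sup_{n>m}|\zeta_n|\}$, and $\rho_m\to\infty$ because $\delta_m\to 0$ and $\sup_{n>m}|\zeta_n|\to 0$. Hence $\psi_k$ is holomorphic and bounded by $C$ on $\{|z|<\rho_m\}$, and Cauchy's estimates on its Taylor coefficients yield $|\mathcal{B}_k f(\zeta)-\mathcal{B}_m f(\zeta)|=|\psi_k(1)-\psi_k(0)|\le C/(\rho_m-1)$ for every $k\ge m$. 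Since the right-hand side is independent of $k$ and tends to $0$ as $m\to\infty$, the sequence $\{\mathcal{B}_k f(\zeta)\}_k$ is Cauchy, hence convergent, and this defines $\hat{\mathcal{B}}f(\zeta)$.

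Finally I would verify that $\hat{\mathcal{B}}f$ is holomorphic. Local boundedness is immediate, since on each $V_{r,M}$ the limit inherits the bound $C$ from Corollary \ref{thm3202B}. For separate holomorphy, fix $\zeta_0\in\mathbb{D}_1^\infty$ and $y\in\ell^1$; the functions $z\mapsto\mathcal{B}_k f(\zeta_0+zy)$ are holomorphic and locally uniformly bounded on the open set where $\zeta_0+zy\in\mathbb{D}_1^\infty$, and they converge pointwise by the previous step, so Vitali's one-variable convergence theorem produces a holomorphic limit $z\mapsto\hat{\mathcal{B}}f(\zeta_0+zy)$. Local boundedness together with separate holomorphy is exactly the definition of holomorphy used here, so $\hat{\mathcal{B}}f$ is holomorphic on $\mathbb{D}_1^\infty$. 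I expect the main obstacle to be obtaining the drift estimate uniformly in $k$; this is precisely what the stabilization identity $A_m(\mathcal{B}_k f)=\mathcal{B}_m f$ and the Cauchy bound on $\psi_k$, with admissible radius $\rho_m\to\infty$ forced by $\zeta\in\ell^1$, are designed to supply.
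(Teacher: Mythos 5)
Your proposal is correct, and its first half takes a genuinely different route from the paper. To get the Cauchy property of $\{(\mathcal{B}_k f)(\zeta)\}_{k}$ at a fixed $\zeta\in\mathbb{D}_1^\infty$, the paper invokes Lemma \ref{thm3203} to factor $\zeta_n=\alpha_n\beta_n$ with $\alpha\in\mathbb{D}_1^\infty$ and $\beta$ in the open unit ball of $c_0$, and then applies the several-variable Schwarz lemma on $\mathbb{D}^{k-l}$ to $F(z)=(\mathcal{B}_k f)(\alpha_1\beta_1,\ldots,\alpha_l\beta_l,\alpha_{l+1}z_1,\ldots,\alpha_k z_{k-l})$, getting $|(\mathcal{B}_k f)(\zeta)-(\mathcal{B}_l f)(\zeta)|\leq 2C_\alpha\max_{l<j\leq k}|\beta_j|$; note that $F(0)=(\mathcal{B}_l f)(\zeta)$ is exactly your stabilization identity $A_m(\mathcal{B}_k f)=\mathcal{B}_m f$, used there implicitly. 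You replace this multiplicative splitting $\ell^1=\ell^1\cdot c_0$ (the classical Hilbert--Bohr device, cf. \cite{Hil, HLS}) by an additive truncation: anchor at $\zeta^{(m)}$, slice in the tail direction $x=\zeta-\zeta^{(m)}$, and extract the uniform-in-$k$ drift bound $C/(\rho_m-1)$ from one-variable Cauchy estimates, the engine being $\rho_m\to\infty$ because $\delta_m\to0$ and $\sup_{n>m}|\zeta_n|\to0$. Your computation checks out: $\zeta^{(m)}+zx\in V_{r,M}$ for $|z|<\rho_m$, the coefficients of $\psi_k$ are bounded by $C\rho_m^{-j}$, and summing the geometric series gives $|\psi_k(1)-\psi_k(0)|\leq C/(\rho_m-1)$ uniformly in $k\geq m$. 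What you buy is self-containedness: Lemma \ref{thm3203} (which costs the paper a half-page construction) is avoided entirely, and the stabilization identity is made explicit; what the paper's factorization buys is a reusable lemma and an estimate in which the modulus of smallness, $\max_j|\beta_j|$, is decoupled from the base point through a single application of the polydisk Schwarz lemma. The holomorphy step is essentially the same in both proofs: you cite Vitali where the paper uses Montel plus pointwise identification of the limit, which amounts to the same theorem in this locally bounded setting. Two points worth stating explicitly to make your write-up airtight: you need $\rho_m>1$ (true for large $m$, with the convention $\rho_m=\infty$ when $\delta_m=0$, in which case the drift is zero anyway) before writing $C/(\rho_m-1)$; and the local uniform boundedness of $z\mapsto(\mathcal{B}_k f)(\zeta_0+zy)$ deserves the one-line justification that every compact subset of $\mathbb{D}_1^\infty$ lies in some $V_{r,M}$, since $\zeta\mapsto\sup_n|\zeta_n|$ is continuous on $\ell^1$ and strictly less than $1$ on $\mathbb{D}_1^\infty$, after which Corollary \ref{thm3202B} applies.
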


The following lemma is used in the proof of Proposition \ref{thm3202}.

\begin{lem}\label{thm3203}
If $\{a_n\}_{n=1}^\infty\in\ell^1$, then there exist $\{b_n\}_{n=1}^\infty \in\ell^1$ and $\{c_n\}_{n=1}^\infty\in c_0$ such that $a_n=b_n c_n$, $n=1,2,\ldots$, where $c_0$ denotes the Banach space of null sequences.
\end{lem}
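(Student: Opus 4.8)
The plan is to produce an explicit factorization built from the tails of the series $\sum_n |a_n|$. First I would set $R_n = \sum_{k=n}^{\infty}|a_k|$, which defines a non-increasing sequence of nonnegative numbers with $R_n\rightarrow0$ as $n\rightarrow\infty$ (this is just the statement that the tail of a convergent series vanishes). The guiding idea is to let $c_n$ decay like $\sqrt{R_n}$: this tends to $0$, hence lies in $c_0$, yet decays slowly enough that dividing $a_n$ by it preserves summability. Since the $a_n$ may be complex while $c_n$ will be real and positive, one has $|b_n|=|a_n|/c_n$, so the factorization works verbatim in the complex case.

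Concretely, I would define $c_n=\sqrt{R_n}$ whenever $R_n>0$ and $c_n=1/n$ whenever $R_n=0$ (the latter can only occur when $a_k=0$ for all $k\geq n$, i.e. for a finitely supported tail). Correspondingly set $b_n=a_n/\sqrt{R_n}$ when $R_n>0$ and $b_n=0$ when $R_n=0$. Because $R_n=0$ forces $a_n=0$, in every case $a_n=b_nc_n$, and $c_n\rightarrow0$, so $\{c_n\}_{n=1}^{\infty}\in c_0$. The degenerate case where all $a_n=0$ is covered by the same prescription, giving $b_n=0$ and $c_n=1/n$.

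The crux is verifying $\{b_n\}_{n=1}^{\infty}\in\ell^1$, i.e. that $\sum_n |a_n|/\sqrt{R_n}<\infty$ summed over the indices with $R_n>0$. For this I would invoke the elementary inequality $\frac{x-y}{\sqrt{x}}\leq 2(\sqrt{x}-\sqrt{y})$, valid for $0\leq y\leq x$, which follows from $\sqrt{x}+\sqrt{y}\leq 2\sqrt{x}$. Applying it with $x=R_n$, $y=R_{n+1}$ and using $|a_n|=R_n-R_{n+1}$ yields $|a_n|/\sqrt{R_n}\leq 2(\sqrt{R_n}-\sqrt{R_{n+1}})$, so the sum telescopes to $\sum_n |a_n|/\sqrt{R_n}\leq 2\sqrt{R_1}<\infty$.

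I do not anticipate a genuine obstacle here: the argument is short and self-contained. The only care required is bookkeeping at indices where $R_n$ vanishes, so that $c_n$ is kept away from $0$ while still tending to $0$, and checking that the telescoping bound cleanly handles all the remaining indices. Thus the only mildly delicate point is separating the finitely-supported-tail case from the case $R_n>0$ for all $n$, after which both the $c_0$ membership and the $\ell^1$ estimate are immediate.
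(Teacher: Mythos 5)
Your proof is correct, and it takes a genuinely different route from the paper's. You work with the tail sums $R_n=\sum_{k\ge n}|a_k|$ and the Abel--Dini-type estimate $|a_n|/\sqrt{R_n}=(R_n-R_{n+1})/\sqrt{R_n}\le 2(\sqrt{R_n}-\sqrt{R_{n+1}})$, which telescopes to the explicit quantitative bound $\sum_n|b_n|\le 2\sqrt{R_1}=2\|a\|_{\ell^1}^{1/2}$; your bookkeeping at indices with $R_n=0$ (where necessarily $a_n=0$, so one may take $b_n=0$ and $c_n=1/n$) is handled correctly, as is the complex case via $|b_n|=|a_n|/c_n$. The paper instead reduces the lemma to producing $\lambda_n\rightarrow\infty$ with $\sum_n|a_n\lambda_n|<\infty$ (equivalently $b_n=a_n\lambda_n$, $c_n=1/\lambda_n$) and builds $\lambda_n$ by a blocking argument: it compares the partial sums of $|a_n|$ with the scaled partial sums of $\sum 1/n^2$ to define cut points $k_j$, extracts a strictly increasing subsequence $k_{s_j}$ so that each block sum $\sum_{n=k_{s_j}+1}^{k_{s_{j+1}}}|a_n|$ is controlled by $\frac{6A}{\pi^2}\sum_{n=s_j+1}^{s_{j+2}}n^{-2}$, sets $\lambda_n=\sqrt{s_j+1}$ constant on each block, and estimates the weighted sum against $\sum n^{-3/2}$. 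Your choice $\lambda_n=1/\sqrt{R_n}$ is the canonical one that collapses this construction to a single telescoping inequality: it is shorter, fully explicit, monotone, and gives a clean norm bound, while the paper's combinatorial construction also needs the preliminary reduction to infinitely many nonzero entries, which your case split renders unnecessary. Both arguments are complete proofs of the factorization $\ell^1=\ell^1\cdot c_0$.
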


\begin{proof}
Without loss of generality, we may assume that $\{a_n\}_{n=1}^\infty$ has infinitely many nonzero entries. It suffices to show that there is a sequence $\{\lambda_n\}_{n=1}^\infty$ such that $\lambda_n\rightarrow\infty$ as $n\rightarrow\infty$, and $\sum_{n=1}^\infty |a_n \lambda_n|<\infty$. Write $A=\sum_{n=1}^\infty|a_n|$ and
$$k_j=\min\left\{k\in\mathbb{N}: \sum_{n=1}^k |a_n|\geq\frac{6A}{\pi^2}\sum_{n=1}^j \frac{1}{n^2}\right\},\quad j\in\mathbb{N}.$$
Then there is a subsequence $\{k_{s_j}\}_{j=1}^\infty$ of $\{k_{j}\}_{j=1}^\infty$ such that $k_{s_1}<k_{s_2}<\cdots$, and for each $j\in\mathbb{N}$,  we have
$$\sum_{n=1}^{k_{s_j}}|a_n|\geq\frac{6A}{\pi^2}\sum_{n=1}^{s_j}\frac{1}{n^2},\quad\sum_{n=1}^{k_{s_{j+1}}}|a_n|
\leq\sum_{n=1}^{k_{s_{j+2}}-1}|a_n|\leq\frac{6A}{\pi^2}\sum_{n=1}^{s_{j+2}}\frac{1}{n^2},$$
and hence
\begin{equation}\label{equ3204}
\sum_{n=k_{s_j}+1}^{k_{s_{j+1}}}|a_n|\leq\frac{6A}{\pi^2}\sum_{n=s_j +1}^{s_{j+2}}\frac{1}{n^2}.
\end{equation}
If $k_{s_j}+1\leq n\leq k_{s_{j+1}}$, set $\lambda_n=\sqrt{s_j +1}$. Then $\lambda_n\rightarrow\infty$ as $n\rightarrow\infty$, and by (\ref{equ3204}),
$$\begin{aligned}
\sum_{n=k_{s_1}+1}^{\infty}|a_n \lambda_n|&=\sum_{j=1}^\infty\sum_{n=k_{s_j}+1}^{k_{s_{j+1}}}|a_n \lambda_n|\\
&=\sum_{j=1}^\infty\sqrt{s_j +1}\sum_{n=k_{s_j}+1}^{k_{s_{j+1}}}|a_n|\\
&\leq\frac{6A}{\pi^2}\sum_{j=1}^\infty\sqrt{s_j +1}\sum_{n=s_j +1}^{s_{j+2}}\frac{1}{n^2}\\
&\leq\frac{6A}{\pi^2}\sum_{j=1}^\infty\sum_{n=s_j +1}^{s_{j+2}}\frac{1}{n\sqrt{n}}\\
&\leq\frac{12A}{\pi^2}\sum_{n=1}^{\infty}\frac{1}{n\sqrt{n}}\\
&<\infty,
\end{aligned}$$
which completes the proof.
\end{proof}

We now present the proof of Proposition \ref{thm3202}.

\vskip2mm

\noindent\textbf{Proof of Proposition \ref{thm3202}.} We first show that $\{\mathcal{B}_k f\}_{k=1}^\infty$ converges pointwise on $\mathbb{D}_1^\infty$. For a fixed $\zeta\in\mathbb{D}_1^\infty$, it follows from Lemma \ref{thm3203} that there exist $\alpha\in\mathbb{D}_1^\infty$ and $\beta\in\mathbb{B}_{0}$ such that $\zeta_n=\alpha_n \beta_n$, $n=1,2,\ldots$, where $\mathbb{B}_{0}$ denotes the open unit ball of $c_0$. By Corollary \ref{thm3202B}, $\{\mathcal{B}_k f\}_{k=1}^\infty$ is uniformly bounded on $\Delta_{\alpha}=\alpha_1\overline{\mathbb{D}}\times\cdots\times\alpha_n\overline{\mathbb{D}}\times\cdots$.
Set
$$C_\alpha=\sup_{k\in\mathbb{N},\upsilon\in \Delta_{\alpha}}|(\mathcal{B}_k f)(\upsilon)|+1.$$
For any $\varepsilon>0$, there exists an index $K$ for which if $k>K$, then $|\beta_k|<\frac{\varepsilon}{2C_\alpha}$.
By a similar argument as in \cite{Hil, HLS}, we conclude that $\{(\mathcal{B}_k f)(\zeta)\}_{k=1}^\infty$ is Cauchy.
Indeed, for $k>l>K$, set
$$F(z)=(\mathcal{B}_k f)(\alpha_1 \beta_1,\ldots,\alpha_l \beta_l,\alpha_{l+1}z_{1},\ldots,\alpha_{k}z_{k-l}),\quad z=(z_1,\ldots,z_{k-l})\in\mathbb{D}^{k-l}.$$
We see that $F$ is holomorphic on $\mathbb{D}^{k-l}$, and $\sup_{z\in\mathbb{D}^{k-l}}|F(z)|\leq C_\alpha$. By Schwarz's lemma \cite[Theorem 1.9]{Ohs}, we have
$$|(\mathcal{B}_k f)(\zeta)-(\mathcal{B}_l f)(\zeta)|=|F(\beta_{l+1},\ldots,\beta_k)-F(0)|\leq2C_\alpha\max_{l+1\leq j\leq k}|\beta_j|<\varepsilon,$$
which implies that $\{(\mathcal{B}_k f)(\zeta)\}_{k=1}^\infty$ is Cauchy, and hence $\{\mathcal{B}_k f\}_{k=1}^\infty$ converges pointwise on $\mathbb{D}_1^\infty$. Write
\begin{equation}\label{equ3205}
(\hat{\mathcal{B}}f)(\zeta)=\lim_{k\rightarrow\infty}(\mathcal{B}_k f)(\zeta),\quad\zeta\in\mathbb{D}_1^\infty.
\end{equation}
Then Corollary \ref{thm3202B} implies that $\hat{\mathcal{B}}f$ is locally bounded. In what follows we show that $\hat{\mathcal{B}}f$ is holomorphic on $\mathbb{D}_1^\infty$. For fixed $\zeta\in\mathbb{D}_1^\infty$ and $\xi\in\ell^{1}$, there exists an open domain $\Omega\subset\mathbb{C}$ such that $0\in\Omega$ and $\{\zeta+\lambda\xi: \lambda\in\Omega\}\subset V_{r,M}$ for some $0<r<1$, $M>0$. For each $k\in\mathbb{N}$, let
$$G_k(\lambda)=(\mathcal{B}_k f)(\zeta+\lambda\xi),\quad\lambda\in\Omega.$$
Then $\{G_k\}_{k=1}^\infty$ is sequence of holomorphic functions on $\Omega$, and by Corollary \ref{thm3202B}, it is uniformly bounded on $\Omega$. Applying Montel's theorem, there is a subsequence that converges uniformly on each compact subset of $\Omega$. By combining this with (\ref{equ3205}), $(\hat{\mathcal{B}}f)(\zeta+\lambda\xi)$ is holomorphic in parameter $\lambda\in\Omega$, and hence $\hat{\mathcal{B}}f$ is holomorphic on $\mathbb{D}_1^\infty$. $\hfill\square$

\vskip2mm

We mention that every function $F$ holomorphic on $\mathbb{D}_{1}^\infty$ has a unique monomial expansion
\begin{equation}\label{equ2104A}
F(\zeta)=\sum_{n=1}^\infty c_n \zeta^{\alpha(n)},
\end{equation}
and the series converges uniformly and absolutely on compact subsets of $\mathbb{D}_{1}^\infty$ for which  we refer readers to \cite{DMP}.
Now we present the proof of Theorem \ref{thm3101}.

\vskip2mm

\noindent\textbf{Proof of Theorem \ref{thm3101}.} We first prove that $(\mathcal{B}f)(\zeta)=\sum_{n=1}^\infty a_n \zeta^{\alpha(n)}$ converges to a holomorphic function on $\mathbb{D}_{1}^\infty$. By Proposition \ref{thm3202}, the sequence $\{\mathcal{B}_k f\}_{k=1}^\infty$ converges to a holomorphic function $\hat{\mathcal{B}}f$. Let
\begin{equation}\label{equ3206}
(\hat{\mathcal{B}}f)(\zeta)=\sum_{n=1}^\infty c_n \zeta^{\alpha(n)},\quad\zeta\in\mathbb{D}_{1}^\infty
\end{equation}
be the monomial expansion of $\hat{\mathcal{B}}f$. For a fixed $k\in\mathbb{N}$, Bohr's $k$te Abschnitt $A_k(\hat{\mathcal{B}}f)$ is holomorphic on $\mathbb{D}^k$, and
$$A_k(\hat{\mathcal{B}}f)(\zeta)=\sum_{n\in\Xi_k} c_n \zeta^{\alpha(n)},\quad\zeta\in\mathbb{D}^k.$$
On the other hand, by the definition (\ref{equ3205}) of $\hat{\mathcal{B}}f$, we have
$$A_k(\hat{\mathcal{B}}f)(\zeta)=(\mathcal{B}_k f)(\zeta)=\sum_{n\in\Xi_k} a_n \zeta^{\alpha(n)},\quad\zeta\in\mathbb{D}^k.$$
It follows from the uniqueness of Taylor expansion that for all $n\in\Xi_k$, $a_n=c_n$, and hence $a_n=c_n$ for all $n\in\mathbb{N}$ by the arbitrariness of $k$.
Comparison of this and (\ref{equ3206}) shows that
$$(\mathcal{B}f)(\zeta)=\sum_{n=1}^\infty a_n \zeta^{\alpha(n)}=\sum_{n=1}^\infty c_n \zeta^{\alpha(n)}$$
converges in $\mathbb{D}_{1}^\infty$, and $\mathcal{B}f=\hat{\mathcal{B}}f$.

We next show that $\mathcal{B}f\in N(\mathbb{D}_{1}^\infty)$ and $\|\mathcal{B}f\|_0=\|f\|_0$. Since $\mathcal{B}f$ is holomorphic on $\mathbb{D}_{1}^\infty$, $(\mathcal{B}f)_{[r]}\in A(\mathbb{T}^\infty)$ for all $0<r<1$, and thus
$$\|\mathcal{B}f\|_0=\sup_{0<r<1}\int_{\mathbb{T}^\infty}\log(1+|(\mathcal{B}f)_{[r]}|)dm_\infty
=\sup_{0<r<1}\sup_{k\in\mathbb{N}}\int_{\mathbb{T}^\infty}\log(1+|(\mathcal{B}_k f)_{[r]}|)dm_\infty.$$
By interchanging the order of taking supremum for $r$ and $k$, we have
\begin{equation}\label{equ3207}
\|\mathcal{B}f\|_0=\sup_{k\in\mathbb{N}}\sup_{0<r<1}\int_{\mathbb{T}^\infty}\log(1+|(\mathcal{B}_k f)_{[r]}|)dm_\infty.
\end{equation}
Since for $k\in\mathbb{N}$, $\mathcal{B}_k f$ is holomorphic on $\mathbb{D}^k$, it follows from (\ref{equ3200}) that
$$\sup_{0<r<1}\int_{\mathbb{T}^\infty}\log(1+|(\mathcal{B}_k f)_{[r]}|)dm_\infty=\sup_{\sigma>0}\int_{\mathbb{T}^\infty}\log(1+|\mathcal{B}_k f_\sigma|)dm_\infty.$$
Substituting this into (\ref{equ3207}) yields
\begin{equation}\label{equ3208}
\|\mathcal{B}f\|_0=\sup_{k\in\mathbb{N}}\sup_{\sigma>0}\int_{\mathbb{T}^\infty}\log(1+|\mathcal{B}_k f_\sigma|)dm_\infty=
\sup_{\sigma>0}\sup_{k\in\mathbb{N}}\int_{\mathbb{T}^\infty}\log(1+|\mathcal{B}_k f_\sigma|)dm_\infty.
\end{equation}
Note that for every $\sigma>0$, $\mathcal{B}f_{\sigma}\in A(\mathbb{T}^\infty)$. Then
$$\sup_{k\in\mathbb{N}}\int_{\mathbb{T}^\infty}\log(1+|\mathcal{B}_k f_\sigma|)dm_\infty=\int_{\mathbb{T}^\infty}\log(1+|\mathcal{B}f_\sigma|)dm_\infty=\|\mathcal{B}f_\sigma\|_0.$$
We conclude from this equality, (\ref{equ3101}) and (\ref{equ3208})  that
$$\|\mathcal{B}f\|_0=\sup_{\sigma>0}\|\mathcal{B}f_\sigma\|_0=\sup_{\sigma>0}\|f_\sigma\|_0=\|f\|_0<\infty.$$
The proof is complete. $\hfill\square$

\vskip2mm

Let $f(s)=\sum_{n=1}^\infty a_n n^{-s}$ be a Dirichlet series with $\sigma_u(f)\leq0$. Example \ref{thm3101B} shows that
$(\mathcal{B}f)(\zeta)=\sum_{n=1}^\infty a_n \zeta^{\alpha(n)}$
does not need to converge at all points in $\mathbb{D}_1^\infty$.
However, for every $0<r<1$, the power series
$$(\mathcal{B}_r f)(\zeta)=\sum_{n=1}^\infty a_n (r\zeta_1,\ldots,r^{k}\zeta_k,\ldots)^{\alpha(n)}$$
converges in $\overline{\mathbb{D}}^\infty$, and $\mathcal{B}_r f\in A(\mathbb{D}^\infty)$. Indeed, let $p_n$ be the $n$-th prime number, then the prime number theorem implies that $\frac{p_n}{n\log n}\rightarrow1$ as $n\rightarrow\infty$ \cite[Theorem 4.5]{Apo}. Therefore, for a fixed $0<r<1$, there exists $\sigma>0$ such that for all $n\in\mathbb{N}$, $r^n\leq p_n^{-\sigma}$, and hence
\begin{equation}\label{equ3209}
r\overline{\mathbb{D}}\times\cdots\times r^n\overline{\mathbb{D}}\times\cdots\subset p_1^{-\sigma}\overline{\mathbb{D}}\times\cdots\times p_n^{-\sigma}\overline{\mathbb{D}}\times\cdots.
\end{equation}
As mentioned in \cite{HLS}, the partial sums of $\mathcal{B}f_\sigma$ converge uniformly on $\overline{\mathbb{D}}^\infty$. That is, for any $\varepsilon>0$, there is an index $N$ for which if $M_2>M_1>N$, then
$$\left\|\sum_{n=M_1}^{M_2}a_n (p_1^{-\sigma}\zeta_1,\ldots,p_k^{-\sigma}\zeta_k,\ldots)^{\alpha(n)}\right\|_\infty<\varepsilon,$$
where $\|\cdot\|_\infty$ denotes the uniform norm in Banach algebra $C(\overline{\mathbb{D}}^\infty)$.
From (\ref{equ3209}), we have
$$\left\|\sum_{n=M_1}^{M_2}a_n (r\zeta_1,\ldots,r^{k}\zeta_k,\ldots)^{\alpha(n)}\right\|_\infty\leq\left\|\sum_{n=M_1}^{M_2}a_n (p_1^{-\sigma}\zeta_1,\ldots,p_k^{-\sigma}\zeta_k,\ldots)^{\alpha(n)}\right\|_\infty<\varepsilon,$$
and thus the partial sums of $\mathcal{B}_r f$ converge uniformly on $\overline{\mathbb{D}}^\infty$. This gives $\mathcal{B}_r f\in A(\mathbb{D}^\infty)$ as desired. By this fact and a similar argument as in the proof of Theorem \ref{thm3101}, we have the following conclusion.

\begin{prop}
Let $f$ be a Dirichlet series with $\sigma_u(f)\leq0$. Then
$$\sup_{\sigma>0}\|f_\sigma\|_0=\sup_{\sigma>0}\int_{\mathbb{T}^\infty}\log(1+|\mathcal{B}f_\sigma|)dm_\infty
=\sup_{0<r<1}\int_{\mathbb{T}^\infty}\log(1+|\mathcal{B}_r f|)dm_\infty.$$
\end{prop}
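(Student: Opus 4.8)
The plan is to reduce both equalities to the finite-variable identity (\ref{equ3200}) through Bohr's $k$te Abschnitt, following the scheme of the proof of Theorem \ref{thm3101}, but with one crucial substitution: since Example \ref{thm3101B} shows that $\mathcal{B}f$ need not converge anywhere on $\mathbb{D}_1^\infty$ under the mere hypothesis $\sigma_u(f)\leq 0$, the object $(\mathcal{B}f)_{[r]}$ is unavailable and must be replaced by the genuinely defined functions $\mathcal{B}_r f\in A(\mathbb{D}^\infty)$ (established in the paragraph preceding the statement, via the prime number theorem and (\ref{equ3209})) together with their truncations. The leftmost equality is immediate: for every $\sigma>0$ we have $\sigma_u(f_\sigma)=\sigma_u(f)-\sigma<0$, so $\mathcal{B}f_\sigma\in A(\mathbb{T}^\infty)$, and (\ref{equ3101}) applied to $f_\sigma$ gives $\|f_\sigma\|_0=\int_{\mathbb{T}^\infty}\log(1+|\mathcal{B}f_\sigma|)dm_\infty$; taking the supremum over $\sigma>0$ yields the first identity.

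The substantive work lies in the second equality, and its first step — where the weaker hypothesis $\sigma_u(f)\leq 0$ (rather than $f\in\mathcal{N}_u$) must be handled with care — is to show that for each $k$ the series $\mathcal{B}_k f(\zeta)=\sum_{n\in\Xi_k}a_n\zeta^{\alpha(n)}$ converges to a holomorphic function on $\mathbb{D}^k$. Because $\sigma_u(f)\leq 0$, for each $\sigma>0$ the terms $a_n n^{-\sigma}$ are bounded, so $|a_n|\leq C_\sigma n^\sigma$; on a polydisk $\rho_1\overline{\mathbb{D}}\times\cdots\times\rho_k\overline{\mathbb{D}}\subset\mathbb{D}^k$ one then chooses $\sigma>0$ small enough that $p_j^\sigma\rho_j<1$ for all $j\leq k$ (possible since $p_j^\sigma\to 1$ as $\sigma\to 0^+$ and there are finitely many indices) and sums the geometric series $\prod_{j=1}^k(1-p_j^\sigma\rho_j)^{-1}$, obtaining absolute and uniform convergence there. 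With $\mathcal{B}_k f$ holomorphic on $\mathbb{D}^k$, I apply (\ref{equ3200}) to $F=\mathcal{B}_k f$. A direct comparison of monomial coefficients gives $(\mathcal{B}_k f)_{\{\sigma\}}=\mathcal{B}_k f_\sigma$ and $(\mathcal{B}_k f)_{[r]}=A_k(\mathcal{B}_r f)$; since both depend only on the first $k$ variables, their $\mathbb{T}^k$-integrals coincide with the corresponding $\mathbb{T}^\infty$-integrals, so for each fixed $k$,
$$\sup_{\sigma>0}\int_{\mathbb{T}^\infty}\log(1+|\mathcal{B}_k f_\sigma|)dm_\infty=\sup_{0<r<1}\int_{\mathbb{T}^\infty}\log(1+|A_k(\mathcal{B}_r f)|)dm_\infty.$$

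It then remains to take the supremum over $k$ and interchange the suprema, which is automatic. On the left, for fixed $\sigma$ the truncations $\mathcal{B}_k f_\sigma=A_k(\mathcal{B}f_\sigma)$ converge pointwise to $\mathcal{B}f_\sigma\in A(\mathbb{T}^\infty)$ and are uniformly bounded, so dominated convergence (exactly as in (\ref{equ3201})) gives $\sup_k\int_{\mathbb{T}^\infty}\log(1+|\mathcal{B}_k f_\sigma|)dm_\infty=\int_{\mathbb{T}^\infty}\log(1+|\mathcal{B}f_\sigma|)dm_\infty$. On the right, for fixed $r$ the truncations $A_k(\mathcal{B}_r f)$ converge pointwise to $\mathcal{B}_r f\in A(\mathbb{D}^\infty)$ (by continuity in the product topology) and are uniformly bounded, so dominated convergence gives $\sup_k\int_{\mathbb{T}^\infty}\log(1+|A_k(\mathcal{B}_r f)|)dm_\infty=\int_{\mathbb{T}^\infty}\log(1+|\mathcal{B}_r f|)dm_\infty$. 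Combining these two limits with the displayed equality for each $k$ produces the second identity, completing the proof.

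I expect the main obstacle to be the very first step of the second equality, namely verifying that the finite truncations $\mathcal{B}_k f$ really are holomorphic on $\mathbb{D}^k$: Example \ref{thm3101B} warns that nothing can be salvaged at the level of $\mathcal{B}f$ on $\mathbb{D}_1^\infty$, so the finite-level convergence must be extracted purely from $\sigma_u(f)\leq 0$ through the coefficient bound and the geometric-series estimate above, rather than by invoking Proposition \ref{thm3201} (which assumes $f\in\mathcal{N}_u$). Everything after that is a faithful repackaging of the Theorem \ref{thm3101} argument, with the interchange of suprema and the two dominated-convergence passages forming the only remaining bookkeeping.
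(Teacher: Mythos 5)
Your proof is correct and takes essentially the same route the paper intends when it writes ``by this fact and a similar argument as in the proof of Theorem \ref{thm3101}'': reduce to the finite-variable identity (\ref{equ3200}) for the Abschnitte $\mathcal{B}_k f$, interchange the suprema over $k$, $\sigma$ and $r$, and pass to the limit in $k$ via dominated convergence together with the monotonicity coming from Corollary \ref{thm2101B}. Your coefficient estimate $|a_n|\leq C_\sigma n^{\sigma}$ (valid since $\sum_n a_n n^{-\sigma}$ converges for every $\sigma>0$) correctly supplies the holomorphy of $\mathcal{B}_k f$ on $\mathbb{D}^k$ under the weaker hypothesis $\sigma_u(f)\leq0$, where Proposition \ref{thm3201} is unavailable --- this is precisely the detail the paper leaves implicit, and your argument also remains valid when both suprema are infinite.
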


\noindent\textbf{Acknowledgements:}\quad This work was partially supported by NNSF of China, and NSF of
Shanghai (21ZR1404200).

\vskip3mm \noindent{Kunyu Guo, School of Mathematical Sciences, Fudan
University, Shanghai, 200433, China, E-mail: kyguo@fudan.edu.cn

\noindent Jiaqi Ni, School of Mathematical Sciences, Fudan
University, Shanghai, 200433, China, E-mail: jqni15@fudan.edu.cn

\noindent Qi Zhou, School of Mathematical Sciences, Fudan
University, Shanghai, 200433, China, E-mail: qzhou17@fudan.edu.cn


\begin{thebibliography}{wide-label}
{\footnotesize

\bibitem[AOS1]{AOS1}
A.~Aleman, J.~Olsen, and E.~Saksman, \emph{Fourier multipliers for Hardy spaces of Dirichlet series}, Int. Math. Res. Not. IMRN, \textbf{16}(2014), 4368-4378.

\bibitem[AOS2]{AOS2}
A.~Aleman, J.~Olsen, and E.~Saksman, \emph{Fatou and brothers Riesz theorems in the infinite-dimensional polydisc}, J. Anal. Math. \textbf{137}(2019), 429-447.

\bibitem[Apo]{Apo}
T.~Apostol, \emph{Introduction to analytic number theory}, Springer-Verlag, New York-Heidelberg, 1976.

\bibitem[Aro]{Aro}
R.~Aron, \emph{Compact polynomials and compact differentiable mappings between Banach spaces}, S$\acute{\mathrm{e}}$minaire Pierre Lelong (Analyse), Ann$\acute{\mathrm{e}}$e 1974/75, Lecture Notes in Math. Vol. 524, Springer, Berlin, 1976, pp. 213-222.

\bibitem[AS]{AS}
P.~Ahern and D.~Sarason, \emph{The $H^p$ spaces of a class of function algebras}, Acta Math. \textbf{117}(1967), 123-163.

\bibitem[Bay]{Bay}
F.~Bayart, \emph{Hardy spaces of Dirichlet series and their composition operators}, Monatsh. Math. \textbf{136}(2002), 203-236.

\bibitem[BBSS]{BBSS}
A.~Bondarenko, O.~Brevig, E.~Saksman, K.~Seip, \emph{Linear space properties of $H^p$ spaces of Dirichlet series}, Trans. Amer. Math. Soc. \textbf{372}(2019), 6677-6702.

\bibitem[BDFMS]{BDFMS}
F.~Bayart, A.~Defant, L.~Frerick, M.~Maestre and P.~Sevilla-Peris, \emph{Multipliers of Dirichlet series and monomial series expansions of holomorphic functions in infinitely many variables}, Math. Ann. \textbf{368}(2017), 837-876.

\bibitem[Bes]{Bes}
A.~Besicovitch, \emph{Almost periodic functions}, Dover Publications, Inc. New York, 1954.

\bibitem[Bin]{Bin}
N.~Bingham, \emph{Szeg\"{o}'s theorem and its probabilistic descendants}, Probab. Surv. \textbf{9}(2012), 287-324.

\bibitem[BMW]{BMW}
S.~Balasubramanian, S.~McCullough and U.~Wijesooriya, \emph{Szeg\"{o} and Widom theorems for the Neil algebra}, Oper. Theory Adv. Appl. 272, Birkh\"{a}user/Springer, Cham, 2019.

\bibitem[Boh]{Boh}
H.~Bohr, \emph{$\ddot{U}$ber die Bedeutung der Potenzreihen unendlich vieler Variabeln in der Theorie der Dirichletschen Reien $\sum a_n/n^s$}, Nachr. Ges. Wiss. G$\mathrm{\ddot{o}}$ttingen. Math.-Phys. Kl. \textbf{A9}(1913), 441-488.

\bibitem[Bog]{Bog}
V.~Bogachev, \emph{Measure theory, Vol. I}, Springer-Verlag, Berlin, 2007.

\bibitem[BP]{BP}
O.~Brevig and K.~Perfekt, \emph{A mean counting function for Dirichlet series and compact composition operators}, Adv. Math. \textbf{385}(2021), Paper No. 107775.

\bibitem[BPSSV]{BPSSV}
O.~Brevig, K.~Perfekt, K.~Seip, A.~Siskakis and D.~Vukoti$\acute{\mathrm{c}}$, \emph{The multiplicative Hilbert matrix}, Adv. Math. \textbf{302}(2016), 410-432.

\bibitem[BQS]{BQS}
F.~Bayart, H.~Queff$\acute{\mathrm{e}}$lec and K.~Seip, \emph{Approximation numbers of composition operators on $H^p$ spaces of Dirichlet series}, Ann. Inst. Fourier (Grenoble), \textbf{66}(2016), 551-588.

\bibitem[CG]{CG}
B.~Cole and T.~Gamelin, \emph{Representing measures and Hardy spaces for the infinite polydisk algebra}, Proc. London Math. Soc. (3). \textbf{53}(1986), 112-142.

\bibitem[Dav]{Dav}
C.~Davis, \emph{Iterated limits in $N^{*}(U^n)$}, Trans. Amer. Math. Soc. \textbf{178}(1973), 139-146.

\bibitem[DG]{DG3}
H.~Dan and K.~Guo, \emph{The periodic dilation completeness problem: cyclic vectors in the Hardy space over the infinite-dimensional polydisk}, J. Lond. Math. Soc. (2). \textbf{103}(2021), 1-34.

\bibitem[DGMS]{DGMS}
A.~Defant, D.~Garcia, M.~Maestre and P.~Sevilla-Peris, \emph{Dirichlet series and holomorphic functions in high dimensions}, New Mathematical Monographs, 37, Cambridge University Press, Cambridge, 2019.

\bibitem[Din]{Din}
S.~Dineen, \emph{Complex analysis in locally convex spaces}, North-Holland Mathematics Studies, 57, Amsterdam-New York, 1981.

\bibitem[DMP]{DMP}
A.~Defant, M.~Maestre and C.~Prengel, \emph{Domains of convergence for monomial expansions of holomorphic functions in infinitely many variables}, J. Reine Angew. Math. \textbf{634}(2009), 13-49.

\bibitem[Gam]{Gam}
T.~Gamelin, \emph{Uniform algebras}, Prentice-Hall, Inc., Englewood Cliffs, N. J., 1969.

\bibitem[GZ1]{GZ1}
K.~Guo and Q.~Zhou, \emph{Cyclic vectors, outer functions and Mahler measure in two variables}, Integr. Equ. Oper. Theory, \textbf{93} (2021), Paper No. 56.

\bibitem[GZ2]{GZ2}
K.~Guo and Q.~Zhou, \emph{Szeg\"{o}'s theorem on Hardy spaces induced by rotation-invariant Borel measures}, Complex Anal. Oper. Theory, \textbf{16}(2022), Paper No. 45.

\bibitem[Hil]{Hil}
D.~Hilbert, \emph{Wesen und ziele einer analysis der unendlichvielen unabh$\ddot{a}$ngigen variabeln}, Rend. Circ. Mat. Palermo, \textbf{27}(1909), 59-74.

\bibitem[HLS]{HLS}
H.~Hedenmalm, P.~Lindqvist and K.~Seip, \emph{A Hilbert space of Dirichlet series and systems of dilated functions in $L^{2}(0,1)$}, Duke Math. J. \textbf{86}(1997), 1-37.

\bibitem[Hof]{Hof}
K.~Hoffman, {\sl Banach Spaces of Analytic Functions},
Prentice-Hall, Inc., Englewood Cliffs, N. J., 1962.

\bibitem[Kra]{Kra}
S.~Krantz, \emph{Function theory of several complex variables}, AMS Chelsea Publishing, Providence, RI, 2001. Reprint of the 1992 edition.

\bibitem[Lab]{Lab}
L.~Labuschagne, \emph{A noncommutative Szeg\"{o} theorem for subdiagonal subalgebras of von Neumann algebras}, Proc. Amer. Math. Soc. \textbf{133}(2005), 3643-3646.

\bibitem[Nak]{Nak}
T.~Nakazi, \emph{Szeg\"{o}'s theorem on a bidisc}, Trans. Amer. Math. Soc. \textbf{328}(1991), 421-432.

\bibitem[Ni1]{Ni}
N.~Nikolski, \emph{In a shadow of the RH: cyclic vectors of Hardy spaces on the Hilbert multidisc}, Ann. Inst. Fourier (Grenoble), \textbf{62}(2012), 1601-1626.

\bibitem[Ni2]{Ni2}
N.~Nikolski, \emph{A correction to ``In a shadow of the RH: cyclic vectors of Hardy spaces on the Hilbert multidisc"}, Ann. Inst. Fourier (Grenoble), \textbf{68}(2018), 563-567.

\bibitem[Ohs]{Ohs}
T.~Ohsawa, \emph{Analysis of several complex variables}, Translations of Mathematical Monographs, 211, American Mathematical Society, Providence, RI, 2002.

\bibitem[OS]{OS}
J.~Olsen and E.~Saksman, \emph{On the boundary behaviour of the Hardy spaces of Dirichlet series and a frame bound estimate}, J. Reine Angew. Math. \textbf{663}(2012), 33-66.

\bibitem[QQ]{QQ}
H.~Queff$\acute{\mathrm{e}}$lec and M.~Queff$\acute{\mathrm{e}}$lec, \emph{Diophantine approximation and Dirichlet series}, 2nd ed. Texts and Readings in Mathematics, 80, Hindustan Book Agency, New Delhi, Springer, Singapore, 2020.

\bibitem[Ru1]{Ru2}
W.~Rudin, \emph{Function theory in polydiscs}, Benjamin, New York, 1969.

\bibitem[Ru2]{Ru3}
W.~Rudin, \emph{Function theory in the unit ball of $\mathbb{C}^n$}, Springer-Verlag, New York-Berlin, 1980.

\bibitem[Ru3]{Ru4}
W.~Rudin, \emph{Real and complex analysis}, 3rd ed. McGraw-Hill Book Co. New York, 1987.

\bibitem[Sar]{Sar}
D.~Sarason, \emph{The $H^p$ spaces of an annulus}, Mem. Amer. Math. Soc. \textbf{56}(1965), 78.

\bibitem[Sim]{Sim}
B.~Simon, \emph{OPUC on one foot}, Bull. Amer. Math. Soc. (N.S.) \textbf{42}(2005), 431-460.

\bibitem[SS]{SS}
J.~Shapiro and A.~Shields, \emph{Unusual topological properties of the Nevanlinna class}, Amer. J. Math. \textbf{97}(1975), 915-936.

\bibitem[Sze]{Sze}
G.~Szeg\"{o}, \emph{Orthogonal polynomials}, 4th ed. American Mathematical Society, Providence, RI, 1975.

\bibitem[Yan]{Yan}
N.~Yanagihara, \emph{Multipliers and linear functionals for the class $N^{+}$}, Trans. Amer. Math. Soc. \textbf{180}(1973), 449-461.

}


\end{thebibliography}
\end{document}